\newcommand{\sca}[1]{\left\langle#1\right\rangle} 
\newcommand{\abs}[1]{\lvert#1\rvert}
\newtheorem{theorem}{Theorem}[section]
\newtheorem{lemma}[theorem]{Lemma}
\newtheorem{proposition}[theorem]{Proposition}
\newtheorem{corollary}[theorem]{Corollary}
\newtheorem*{collatz}{The $3x+1$-problem}
\theoremstyle{definition}
\theoremstyle{remark}
\newtheorem{example}[theorem]{Example}
\newtheorem{remark}[theorem]{Remark}
\numberwithin{equation}{section}
\begin{document}

\title{Koopman operators and the $3x+1$-dynamical system.}

\author{John Leventides}
\address{Department of Economics, Faculty of Economics and Political Sciences, National and Kapodistrian University of Athens.}
\email{ylevent@econ.uoa.gr}

\author{Costas Poulios}
\address{Department of Economics, Faculty of Economics and Political Sciences, National and Kapodistrian University of Athens.}
\email{konpou@econ.uoa.gr}

\subjclass[2010]{37P99; 46L99; 47B01.}
\keywords{$3x+1$-dynamical system, Collatz conjecture, discrete dynamical systems, Koopman operators, $C^*$-algebras.}

\date{}

\begin{abstract}
The $3x+1$-problem (or Collatz problem) is a notorious conjecture in arithmetic. It can be viewed as iterating a map and, therefore, it is a dynamical system on the discrete space $\mathbb{N}$ of natural numbers. The emerging dynamical system is studied in the present work with methods from the theory of Koopman operators and $C^*$-algebras. This approach enables us to ``lift" the $3x+1$-dynamical system from the state space (i.e the set $\mathbb{N}$) to spaces of functions defined on the state space, i.e. to sequence spaces. The advantage of this lifting is that the Collatz problem can be described via bounded linear operators, which consist an extensively studied area of Analysis. We study the properties of these operators and their relationship to the $3x+1$-problem. Furthermore, we use Fourier transform techniques to investigate the frequency content of the sequences of signs emerging from the trajectories of the Collatz map. This enables us to define an isometry on a Hilbert space. Finally, we utilize the $C^*$-algebra generated by this isometry in order to study how the sequences of signs correlate with each other.
\end{abstract}

\maketitle

\section{Introduction}\label{sec.introduction}
The Collatz conjecture or $3x+1$-problem is one of those mathematical problems which are very simple to be formulated but extremely difficult to be solved. In order to state this problem, we only need the Collatz function which is defined on integers with the following simple way: if an integer is odd ``multiply by three and add one", while if it is even ``divide by two". In other words, one has
$$C(n)= \left\{
          \begin{array}{ll}
            \frac{n}{2}, & \hbox{$n\equiv 0\mod 2$;} \\
            3n+1, & \hbox{$n\equiv 1\mod 2$.}
          \end{array}
        \right.
$$
Given any positive integer $n$, we are interested in the \emph{trajectory} or \emph{orbit} of $n$ under the function $C$, that is the sequence $(n, C(n), C^2(n), C^3(n), \ldots, C^k(n), \ldots)$ produced by $C$ under iteration. The Collatz conjecture asserts that, starting from any $n$, the above-mentioned trajectory eventually reaches the number $1$, which (easily) implies that thereafter iterations cycle taking successive values $1,4,2,1,\ldots$.

It is a very common technique in the literature to replace the Collatz function with a slight modification of it, namely the function
$$T(n)= \left\{
          \begin{array}{ll}
            \frac{n}{2}, & \hbox{$n\equiv 0\mod 2$;} \\
            \frac{3n+1}{2}, & \hbox{$n\equiv 1\mod 2$.}
          \end{array}
        \right.
$$
The map $T$ is more convenient than $C$ in a number of ways and the majority of the known results is expressed in terms of the function $T$. Each iterate of $T$ performs one division by $2$ and it is not hard to see that the trajectory $\{n, T(n), \ldots, T^k(n), \ldots \}$ simply omits some steps of the corresponding trajectory generating by $n$ under iteration of $C$. In the present paper, the term Collatz function (or Collatz map) will always refer to the map $T$. In this framework, the Collatz conjecture is equivalent to the following statement.

\begin{collatz}
For every positive integer $n$, there is $k_0=k_0(n) \in \mathbb{N}$ (depending on $n$) such that $T^{k_0}(n)=1$ (which means that after this point iterations of $T$ cycle taking the values $1,2,1,\ldots$).
\end{collatz}

Let us now discuss the possible behaviour of a trajectory coming from iteration of the map $T$. Fix any positive integer $n$. Then, two cases occur: either
(a)~there are $k_0, m$ such that $T^{k_0+m}(n) = T^{k_0}(n)$, or
(b)~$T^k(n) \ne T^m(n)$ for any $k\neq m$.
In the second case, it is clear that the sequence $(T^k(n))_{k\in \mathbb{N}}$ diverges to infinity. If the first case occurs, then the sequence
$(T^k(n))_{k\in \mathbb{N}}$ end in the cycle $$T^{k_0}(n), T^{k_0+1}(n), T^{k_0+2}(n), \ldots, T^{k_0+m-1}(n),$$
which repeats itself. It is not hard to see that when the ``length" $m$ of this cycle is $2$ then either $T^{k_0}(n)$ or $T^{k_0+1}(n)$ is equal to $1$.
Consequently, for the behaviour of the Collatz function there are three possibilities.
\begin{enumerate}
    \item The Collatz conjecture holds true; or,
    \item there is (at least) one positive integer $n$ such that the sequence $(T^k(n))_{k\in \mathbb{N}}$ ends in some attractor of length bigger than $2$; or,
    \item there is (at least) one positive integer $n$ such that $(T^k(n))_{k\in\mathbb{N}}$ diverges to infinity.
\end{enumerate}
Therefore, if one wishes to solve the Collatz conjecture, then they should exclude the last two cases. However, this remark does not facilitate the solution of the problem. Despite the remarkable work of many scientists, the $3x+1$ problem has been proved extremely resilient to any attacks so far. Several areas of mathematics have been utilized in the effort the $3x+1$ problem to be solved. Indicatively, we can mention number theory, dynamical systems, ergodic theory, stochastic models, probability theory etc. Furthermore, computer programs have also been employed and have shown that the conjecture is true for billions of numbers. We refer to \cite{Lag-1} and \cite{Lagbook} for a survey on the $3x+1$-problem, and to \cite{Lag-2} for a bibliography on the subject.

The purpose of this work is to apply ideas and techniques from the Theory of Operators in order to the study the dynamical system emerging from the $3x+1$-function, i.e.
$$x_{n+1}=T(x_n), \quad n\in\mathbb{N}^*.$$
A standard approach towards this direction is by exploiting the so-called composition or Koopman operator. This type of operators represents an increasingly popular formalism of dynamical systems and it enables the analysis and prediction especially of nonlinear systems. The core idea underlying the Koopman operator theory is as follows. Assume that we are given a dynamical system
$$x^+ = S(x), \quad x\in M,$$
defined on the state space $M$. (In the case of the $3x+1$-dynamical system, we have $S=T$ and $M=\mathbb{N}$.) Any function $g\colon M\to \mathbb{R}$ is called an \emph{observable} of the system. The set of all observables forms a vector space. The Koopman operator or composition operator, denoted by $\mathcal{K}$, is a linear operator defined on this vector space and it is given by
$$ \mathcal{K}(g) = g\circ S,$$
where $\circ$ is the composition of functions. Roughly speaking, the operator $\mathcal{K}$ updates every observable $g$ according to the evolution of the initial dynamical system. The main advantage of the Koopman operator is its linearity which follows easily form its definition as a composition operator. However, it should pointed out that the space of all observables is infinite dimensional. In other words, a ``trade off'' takes place, where we exchange linearity with dimensions. (On account of the fact that the $3x+1$-dynamical system is of discrete-time nature, we focus on dynamical system of this
type. However, Koopman operators are also applied to continuous-time systems and the analysis is quite similar. We refer to \cite{arba}, \cite{budi} and \cite{mez} for more information concerning Koopman operator theory and its applications.)

The rest of the paper is organised as follows. In Section \ref{sec.Collatz} we summarize some basic fact concerning the Collatz map $T$. In Section \ref{sec.density_theorems}, we review several density theorems related to the $3x+1$ problem. In Sections \ref{sec-Koopman_operators}, \ref{sec. spectral radius} and \ref{sec.ell-1-operator}, the forward and backward Collatz-Koopman operators on $\ell_p$, $1\le p\le\infty$ are introduced and their properties and connection to the $3x+1$-problem are studied. In Section \ref{sec.periodicity}, we analyze the double indexed sequence of signs $((-1)^{T^k(n)} )_{k,n=1}^\infty$ associated with the orbits of the Collatz map. Based on this analysis, we define in Section \ref{sec.ell-2-oper} an isometry on a Hilbert space and in Section \ref{sec.Wold-decomp} we describe its Wold-von Neumann decomposition. Finally, in Section \ref{sec.C^star-alg} we utilize the $C^*$-algebra generated by the aforementioned isometry in order to study how the sequences of signs correlate with each other.

The present work has a two-pronged aim. Firstly, we believe that the results of the paper contribute to the better understanding of the $3x+1$-problem and, secondly, that our work may be applied to other dynamical systems and, thus, it will provide new tools for the study of such systems.

\section{Basic properties of the $3x+1$-dynamical system}\label{sec.Collatz}
In this section, we review and, in some cases, refine a few basic facts about the $3x+1$ dynamical system. First of all, we consider the next double indexed sequence. For any $i=1,2,3,\ldots $ and any $n=1,2,\ldots$, we set
$$x_{ni} = T^{i-1}(n) \mod 2,$$
where $T^0(n)=n$ for every positive integer $n$, $T^1=T$ and $T^m = T\circ T \circ \ldots \circ T$. The sequence $(x_{nk})_{n,k=1}^\infty$ determines completely the trajectories of the $3x+1$-map $T$. This statement becomes clear and more accurate with the following result which can be proved by induction on $k$.

\begin{theorem}\label{th.formula_for_T^k}
For any $k=0,1,2\ldots$ and any $n=1,2,\ldots$ the following formula holds:
\begin{equation}\label{eq.colla-1}
T^k(n) =\frac{3^{\sum_{i=1}^kx_{ni}}}{2^k} \cdot n + \frac{1}{2^k} \sum_{i=1}^{k} x_{ni} 2^{i-1} 3^{\sum_{\lambda=i+1}^{k}x_{n\lambda}}.
\end{equation}
\end{theorem}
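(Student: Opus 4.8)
The plan is to prove \eqref{eq.colla-1} by induction on $k$, as the authors indicate. The crucial preliminary step, which powers the whole argument, is to collapse the two-case definition of $T$ into a single algebraic expression involving the parity bit. Writing $x = m \bmod 2$ for the parity of an integer $m$, one checks at once that
$$T(m) = \frac{3^{x}\, m + x}{2},$$
since $x=0$ recovers $m/2$ and $x=1$ recovers $(3m+1)/2$. Applying this with $m = T^k(n)$, whose parity is exactly $x = T^k(n)\bmod 2 = x_{n,k+1}$, yields the recursion $T^{k+1}(n) = \tfrac{1}{2}\bigl(3^{x_{n,k+1}}\, T^k(n) + x_{n,k+1}\bigr)$, which is the engine of the induction.

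For the base case I would take $k=0$: the right-hand side of \eqref{eq.colla-1} reduces to $3^{0}/2^{0}\cdot n$ plus an empty sum, hence equals $n = T^0(n)$. (One could equally start at $k=1$, where the formula collapses to $T(n) = \tfrac{1}{2}(3^{x_{n1}}n + x_{n1})$, in agreement with the one-step identity above.) For the inductive step I would assume \eqref{eq.colla-1} for some $k$ and substitute it into $T^{k+1}(n) = \tfrac{1}{2}\bigl(3^{x_{n,k+1}}\, T^k(n) + x_{n,k+1}\bigr)$. Multiplying the inductive expression by $3^{x_{n,k+1}}/2$ has two effects: the leading term becomes $\tfrac{1}{2^{k+1}}\,3^{\sum_{i=1}^{k+1} x_{ni}}\,n$, because the new factor absorbs the index $k+1$ into the outer exponent sum; and inside the finite sum the factor $3^{x_{n,k+1}}$ extends each inner exponent $\sum_{\lambda=i+1}^{k}x_{n\lambda}$ to $\sum_{\lambda=i+1}^{k+1}x_{n\lambda}$. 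The leftover additive term $x_{n,k+1}/2$ must then be identified as exactly the $i=k+1$ summand of the target sum: indeed $\tfrac{1}{2^{k+1}}\,x_{n,k+1}\,2^{k}\,3^{0} = x_{n,k+1}/2$, where the $3^{0}$ arises from the now-empty inner sum $\sum_{\lambda=k+2}^{k+1}x_{n\lambda}$. Folding this term into the sum upgrades its upper limit from $k$ to $k+1$ and produces precisely \eqref{eq.colla-1} with $k$ replaced by $k+1$.

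The computation is routine once the one-step identity is in hand; the only real care lies in the exponent bookkeeping, namely verifying that the newly introduced parity bit $x_{n,k+1}$ is distributed correctly, augmenting both the outer exponent $\sum_{i=1}^{k} x_{ni}\mapsto\sum_{i=1}^{k+1} x_{ni}$ and every inner exponent, and that the stray $x_{n,k+1}/2$ matches the empty-product convention as the topmost summand. I expect the main (and rather modest) obstacle to be keeping these index ranges and the empty-sum/empty-product conventions consistent, rather than any genuine conceptual difficulty.
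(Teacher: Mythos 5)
Your proof is correct and follows exactly the route the paper indicates: the paper offers no written proof beyond the remark that the formula "can be proved by induction on $k$", and your argument carries out precisely that induction, with the one-step identity $T(m)=\tfrac{1}{2}(3^{x}m+x)$, $x=m\bmod 2$, supplying the recursion $T^{k+1}(n)=\tfrac{1}{2}\bigl(3^{x_{n,k+1}}T^k(n)+x_{n,k+1}\bigr)$ and the exponent bookkeeping handled correctly. Nothing is missing; this is a complete version of the proof the authors left to the reader.
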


The importance of the sequence $(x_{nk})_{n,k=1}^\infty$ for the
$3x+1$-problem became clear very early in the investigation of the
problem and it is usually called the ``parity sequence'' (e.g. see
\cite{ever}). Although it is hard to find some structure (if any
structure exists) in the sequence $(x_{nk})_{k=1}^\infty$ for an
arbitrary but fixed $n$ (and consequently to the trajectories
$\{T^k(n)\}_{k=1}^\infty$ of the Collatz map), the structure of the
sequence $(x_{nk})_{n=1}^\infty$ for any fixed $k$ is very easy and
clear. This sequence is periodic with period equal to $2^k$ (see
\cite{ever}). However, a little more can be said about the structure
of $(x_{nk})_{n=1}^\infty$ and we will make use of the following
proposition.

\begin{proposition}\label{prop.structure-x_kn}
\begin{enumerate}
  \item For any $k,n=1,2,\ldots$, we have $ x_{n+2^{k-1},k} = 1+ x_{nk}.$

  \item   For any $k=1,2,\ldots$, the sequence $(x_{nk})_{n=1}^\infty$ is periodic with period $2^k$. Furthermore, if we set $\mathbf{y_k} = (x_{nk})_{n=1}^{2^{k-1}}$ then the sequence $(x_{nk})_{n=1}^\infty$ is built periodically by the vector $(\mathbf{y_k}, \mathbf{1}+\mathbf{y_k}).$
\end{enumerate}
\end{proposition}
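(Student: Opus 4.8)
The plan is to prove part (1) by induction on $k$ and then obtain part (2) as an immediate consequence. The crucial tool throughout is the \emph{shift relation}
$$x_{T(n),\,i} = x_{n,\,i+1}, \qquad i,n\ge 1,$$
which is immediate from the definition, since $x_{T(n),i}=T^{i-1}(T(n))\bmod 2 = T^i(n)\bmod 2 = x_{n,i+1}$. Here and below the identity $x_{n+2^{k-1},k}=1+x_{nk}$ is always read modulo $2$; that is, it asserts that adding $2^{k-1}$ to $n$ flips the parity recorded in the $k$-th column.

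For the base case $k=1$ one has $x_{n1}=n\bmod 2$, so $x_{n+1,1}=(n+1)\bmod 2 = 1+x_{n1}\pmod{2}$. For the inductive step I would assume the statement $P(k)$, that $x_{n+2^{k-1},k}\equiv 1+x_{nk}\pmod{2}$ for every $n$, and first extract from it the auxiliary \emph{periodicity fact} $Q(k)$, that the $k$-th column has period $2^k$: applying $P(k)$ twice yields $x_{n+2^k,k}=1+x_{n+2^{k-1},k}=x_{nk}\pmod{2}$. I would then prove $P(k+1)$ by using the shift relation to write $x_{n,k+1}=x_{T(n),k}$ and $x_{n+2^k,k+1}=x_{T(n+2^k),k}$, and by comparing $T(n+2^k)$ with $T(n)$ according to the parity of $n$.

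This case analysis is where the argument really happens, and the odd case is the main obstacle. If $n$ is even, then so is $n+2^k$, and $T(n+2^k)=\tfrac{n+2^k}{2}=T(n)+2^{k-1}$; applying $P(k)$ with $T(n)$ in place of $n$ gives $x_{T(n+2^k),k}=1+x_{T(n),k}$, which is exactly $P(k+1)$ for this $n$. If $n$ is odd, then $n+2^k$ is odd as well, and a short computation gives $T(n+2^k)=\tfrac{3(n+2^k)+1}{2}=T(n)+3\cdot 2^{k-1}=T(n)+2^{k-1}+2^{k}$. Here $P(k)$ cannot be applied directly because of the extra summand $2^{k}$; this is precisely where the periodicity fact $Q(k)$ is needed, to absorb the $+2^{k}$ term, after which $P(k)$ closes the step. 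Thus the induction is mildly self-referential: at each level $P(k)$ must be upgraded to $Q(k)$ before it can drive $P(k+1)$.

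Finally, part (2) would follow with no extra work. The period-$2^k$ assertion is just $Q(k)$, already obtained from $P(k)$. For the block structure, the first half of a period is by definition $\mathbf{y_k}=(x_{nk})_{n=1}^{2^{k-1}}$, while the second half is $(x_{nk})_{n=2^{k-1}+1}^{2^{k}}$, whose $j$-th entry equals $x_{2^{k-1}+j,\,k}=1+x_{jk}$ by part (1); hence the second half is $\mathbf{1}+\mathbf{y_k}$, and the column is built periodically from the vector $(\mathbf{y_k},\,\mathbf{1}+\mathbf{y_k})$.
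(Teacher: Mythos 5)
Your proof is correct, but it runs along a genuinely different track from the paper's. The paper proves part (1) by a strong induction that feeds into the closed-form expression for $T^k$ (Theorem \ref{th.formula_for_T^k}): from the inductive hypothesis at all levels $i\le k$ it extracts $x_{n+2^k,i}=x_{ni}$ for $i=1,\ldots,k$, substitutes into the formula, and obtains the exact arithmetic identity $T^k(n+2^k)-T^k(n)=3^{\sum_{i=1}^k x_{ni}}$, an odd number, whence the parity flip at level $k+1$. You instead avoid Theorem \ref{th.formula_for_T^k} altogether: your engine is the one-step shift (cocycle) relation $x_{n,k+1}=x_{T(n),k}$, combined with a parity case analysis of a single application of $T$ ($T(n+2^k)=T(n)+2^{k-1}$ for $n$ even, $T(n+2^k)=T(n)+2^{k-1}+2^k$ for $n$ odd), with the auxiliary periodicity $Q(k)$ — obtained by applying $P(k)$ twice — needed only at the top level to absorb the extra $2^k$ in the odd case. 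Your route is more elementary and self-contained, in that it uses only the definition of $T$ and simple (rather than strong) induction; the paper's route is heavier but buys a strictly stronger quantitative statement: not merely that $T^k(n+2^k)$ and $T^k(n)$ have opposite parity, but that their difference is exactly $3^{\sum_i x_{ni}}$, which is precisely the fact recycled in Corollary \ref{cor.Collatz-function} in the form $T^k(2^kp+\upsilon)=3^{d_{\upsilon k}}p+T^k(\upsilon)$ and used again in later sections. Both arguments are valid; yours proves the proposition as stated, while the paper's proof doubles as groundwork for the sharper identity it needs downstream.
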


\begin{proof}
The second assertion is an immediate consequence of the first one. The first assertion can be proved easily by induction on $k$. Indeed, for $k=1$, the result is clear, since $x_{n1} = n \mod2$ and $x_{n+1,1}=(n+1)\mod2=1+x_{n1}.$

Assume now that for any $i=1,2,\ldots, k$ and any $n=1,2,\ldots$, we have $x_{n+2^{i-1},i}=1+x_{ni}$. We show that $x_{n+2^k,k+1}=1+x_{n,k+1}$ for every $n$. It suffice to prove that $T^{k}(n+2^{k}) - T^{k}(n) = 1 \mod 2$. The inductive hypothesis implies that $x_{n+2^{i},i}=x_{ni}$ for any $i=1,2,\ldots,k$ and $n=1,2,\ldots$. Since $2^k$ is an integer multiple of $2^i$ for any $i=1,2,\ldots,k$, we obtain:
$$x_{n+2^{k},i}=x_{ni} \quad \text{for all } i=1,2,\ldots, k \text{ and } n=1,2,\ldots.$$
The above equation and Theorem \ref{th.formula_for_T^k} imply, after some routine calculations, that
$$
  T^{k}(n+2^{k}) - T^{k}(n) = 3^{\sum_{i=1}^{k} x_{ni}}= 1\mod 2,
$$
and the result follows.
\end{proof}
By Theorem \ref{th.formula_for_T^k} and Proposition
\ref{prop.structure-x_kn}, we deduce immediately the next corollary.

\begin{corollary}\label{cor.Collatz-function}
 Let $k,n$ be any positive integers. We write $n=2^k p+ \upsilon$, where $p,\upsilon \in \mathbb{N}$ with $0\le \upsilon<2^k$.
Then, the following equation hods:
$$T^k(n) = \frac{3^{d_{\upsilon k}} \cdot n + \phi_{\upsilon k}}{2^k},$$
where $d_{\upsilon k}$ and $\phi_{\upsilon k}$ depend only on $k$ and the
remainder $\upsilon$ and, more precisely, they are given by:
$$d_{\upsilon k} = \sum_{i=1}^k x_{\upsilon i} \quad \text{and} \quad \phi_{\upsilon k}=\sum_{i=1}^{k} x_{\upsilon i} \cdot 2^{i-1} \cdot 3^{\sum_{\lambda=i+1}^k x_{\upsilon\lambda}}.$$
Furthermore, the above equation can be written as $ T^k(n) = 3^{d_{\upsilon k}}  p +T^k(\upsilon)$. In the case where $\upsilon=0$, that is $n=2^k \cdot p$ is a multiple of $2^k$, then $T^k(n) = p$.
\end{corollary}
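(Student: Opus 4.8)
The plan is to feed the substitution $n = 2^k p + \upsilon$ into the closed form of Theorem~\ref{th.formula_for_T^k} and to exploit the periodicity from Proposition~\ref{prop.structure-x_kn} in order to replace each parity $x_{ni}$, which a priori depends on the full value of $n$, by the corresponding parity $x_{\upsilon i}$ of the remainder. Once that replacement is justified, the corollary is essentially a substitution.

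First I would record the key reduction, which is where the real content lies: for every $i = 1, 2, \ldots, k$ one has $x_{ni} = x_{\upsilon i}$. Since $n - \upsilon = 2^k p$ and $2^i \mid 2^k$ whenever $i \le k$, we have $n \equiv \upsilon \pmod{2^i}$. Combining this with the fact established inside the proof of Proposition~\ref{prop.structure-x_kn} (namely $x_{m + 2^k, i} = x_{mi}$ for all $i \le k$, equivalently the $2^i$-periodicity of $(x_{mi})_m$), and iterating the period down from $n$ to $\upsilon$, yields $x_{ni} = x_{\upsilon i}$. The one point needing care here is the boundary case $\upsilon = 0$, since the double-indexed sequence is only defined for index $\ge 1$; I would simply set this case aside and treat $n = 2^k p$ directly at the end.

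With the reduction in hand, the two exponent sums appearing in Theorem~\ref{th.formula_for_T^k} become exactly $\sum_{i=1}^k x_{ni} = \sum_{i=1}^k x_{\upsilon i} = d_{\upsilon k}$ and $\sum_{i=1}^{k} x_{ni}\, 2^{i-1}\, 3^{\sum_{\lambda=i+1}^{k} x_{n\lambda}} = \phi_{\upsilon k}$, so that the formula of the theorem reads $T^k(n) = (3^{d_{\upsilon k}} n + \phi_{\upsilon k})/2^k$ exactly as claimed; by construction $d_{\upsilon k}$ and $\phi_{\upsilon k}$ depend only on $k$ and $\upsilon$. For the \emph{furthermore} clause I would then substitute $n = 2^k p + \upsilon$ and split the resulting fraction:
$$T^k(n) = \frac{3^{d_{\upsilon k}}(2^k p + \upsilon) + \phi_{\upsilon k}}{2^k} = 3^{d_{\upsilon k}} p + \frac{3^{d_{\upsilon k}} \upsilon + \phi_{\upsilon k}}{2^k}.$$
Applying Theorem~\ref{th.formula_for_T^k} directly to the integer $\upsilon$, whose parities are precisely the $x_{\upsilon i}$, identifies the second summand as $T^k(\upsilon)$, giving $T^k(n) = 3^{d_{\upsilon k}} p + T^k(\upsilon)$.

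Finally, for $\upsilon = 0$ the number $n = 2^k p$ is divisible by $2^k$, so each application of $T$ merely halves it and $T^k(n) = p$; this is consistent with the general formula, since in this case $d_{0k} = 0$ and $\phi_{0k} = 0$ (all relevant parities vanish). The only genuine obstacle is the periodicity reduction $x_{ni} = x_{\upsilon i}$ of the first step together with the attendant $\upsilon = 0$ bookkeeping; everything thereafter is routine substitution and simplification.
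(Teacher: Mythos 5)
Your proposal is correct and follows exactly the route the paper intends: the paper deduces this corollary ``immediately'' from Theorem~\ref{th.formula_for_T^k} and Proposition~\ref{prop.structure-x_kn}, and your argument simply fills in those details --- the reduction $x_{ni}=x_{\upsilon i}$ for $i\le k$ via the $2^i$-periodicity of the parity columns, substitution into the closed formula, and the separate elementary treatment of $\upsilon=0$. Nothing is missing; your explicit handling of the $\upsilon=0$ boundary case (where $x_{0i}$ is not defined in the paper) is a sound piece of bookkeeping that the paper leaves implicit.
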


Next we consider the infinite dimensional matrix $ [ x_{ni} ]_{n=1, i=1}^{\infty, k}$. The $i$-th column of this matrix has period $2^i$. Hence, the matrix itself has period $2^k$ in the sense that the first $2^k$ rows build periodically the matrix. In other words the matrix is build periodically by the $2^k\times k$-matrix $B_k = [ x_i^n ]_{n=1, i=1}^{2^k, k}$. These matrices in turn are build by the vectors $(\mathbf{y_i})_{i=1}^\infty$ as follows:
$$B_1= \left[\begin{array}{c}
               1 \\
               0
             \end{array} \right]
=\left[\begin{array}{c}
               \mathbf{y_1} \\
               1+\mathbf{y_1}
             \end{array} \right]$$
and
$$B_{k+1}= \left[\begin{array}{cc}
                   B_k & \mathbf{y_{k+1}}^t \\
                   B_k & \mathbf{1}^t+ \mathbf{y_{k+1}}^t
                 \end{array} \right].$$
The next result (see \cite{ever}) is that the rows of the matrix $B_k$ are exactly all the elements of $\mathbb{Z}_2^k$. We consider the $2^k\times k$ matrix $A_k$ whose rows are the elements of $\mathbb{Z}_2^k$ in lexicographical order. Then, we have the following.

\begin{theorem}\label{th.rows_of parity matrix}
  For any $k=1,2,\ldots $, the rows of the matrix $B_k$ are a permutation of the rows of the matrix $A_k$.
\end{theorem}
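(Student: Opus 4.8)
The plan is to argue by induction on $k$, exploiting the recursive block description of $B_{k+1}$ recorded above together with Proposition~\ref{prop.structure-x_kn}. The key reduction is the following: the matrix $A_k$ has as its rows precisely the $2^k$ distinct elements of $\mathbb{Z}_2^k$, each occurring exactly once, and $B_k$ also has exactly $2^k$ rows; hence it suffices to show that the rows of $B_k$ are pairwise distinct. Indeed, a map between two finite sets of equal cardinality is a bijection as soon as it is injective, so once the $2^k$ rows of $B_k$ are known to be distinct elements of $\mathbb{Z}_2^k$ they must exhaust $\mathbb{Z}_2^k$, and therefore form a permutation of the rows of $A_k$.

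The base case $k=1$ is immediate, since $B_1$ has the two rows $(1)$ and $(0)$, which are exactly the two elements of $\mathbb{Z}_2^1$. For the inductive step, I would first give a careful justification of the block identity for $B_{k+1}$. For the first $k$ columns this is just periodicity: the $i$-th column has period $2^i$, and $2^i$ divides $2^k$, so $x_{n+2^k,i}=x_{ni}$ for all $i\le k$; consequently the top and bottom halves of $B_{k+1}$ agree in these columns and each equals $B_k$. For the last column, Proposition~\ref{prop.structure-x_kn}(1), applied with index $k+1$, gives $x_{n+2^k,\,k+1}=1+x_{n,\,k+1}$, which is exactly the relation $\mathbf{1}^t+\mathbf{y_{k+1}}^t$ appearing in the bottom-right block. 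This is the step that does the real work, and I regard it as the main (if modest) obstacle: everything else is bookkeeping, whereas here one must invoke the nontrivial additive structure of the parity sequence supplied by Proposition~\ref{prop.structure-x_kn}.

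With the block structure in hand, the conclusion follows by a short counting argument. By the inductive hypothesis each element $r\in\mathbb{Z}_2^k$ occurs exactly once among the rows of $B_k$, say as row $n_0$. In $B_{k+1}$ the prefix $r$ then appears in exactly two rows: row $n_0$ of the top block, with final entry $x_{n_0,\,k+1}$, and row $n_0+2^k$ of the bottom block, with final entry $1+x_{n_0,\,k+1}$. Since $\{x_{n_0,\,k+1},\,1+x_{n_0,\,k+1}\}=\{0,1\}$ in $\mathbb{Z}_2$, these two rows are precisely $(r,0)$ and $(r,1)$. Letting $r$ range over $\mathbb{Z}_2^k$, the rows of $B_{k+1}$ are then seen to be exactly the $2^{k+1}$ vectors $(r,b)$ with $r\in\mathbb{Z}_2^k$ and $b\in\mathbb{Z}_2$, each occurring once; that is, they are a permutation of the rows of $A_{k+1}$, which completes the induction.

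Finally, I would note that a non-inductive route is available through Corollary~\ref{cor.Collatz-function}: since the parity vector $(x_{n1},\dots,x_{nk})$ depends only on the residue $\upsilon=n \bmod 2^k$, one could instead try to show directly that distinct residues yield distinct parity vectors. However, this amounts to proving injectivity of the ``parity map'' by hand, which is less transparent than the recursive argument above, so I would prefer the induction.
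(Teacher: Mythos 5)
Your proof is correct and follows essentially the same route as the paper: induction on $k$ using the block decomposition $B_{k+1}=\left[\begin{smallmatrix} B_k & \mathbf{y_{k+1}}^t \\ B_k & \mathbf{1}^t+\mathbf{y_{k+1}}^t \end{smallmatrix}\right]$, with the observation that each row of $B_k$ reappears twice, once ending in $0$ and once in $1$, so that the rows of $B_{k+1}$ realize $\mathbb{Z}_2^k\times\mathbb{Z}_2$. Your write-up is somewhat more careful than the paper's in that you explicitly justify the block identity via the column periodicities and Proposition~\ref{prop.structure-x_kn}(1), which the paper dismisses as ``by construction,'' but the underlying argument is identical.
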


\begin{proof}
 By induction on $k$. For $k=1$ the result is clear since
$$B_1=\left[\begin{array}{c}
               1 \\
               0
             \end{array} \right].$$
Assume now that $B_k$ is a permutation of $A_k$ for some $k=1,2,\ldots$. Then, by construction, the matrix $B_{k+1}$ is given by:
$$B_{k+1}= \left[\begin{array}{cc}
                   B_k & \mathbf{y_{k+1}}^t \\
                   B_k & \mathbf{1}^t+ \mathbf{y_{k+1}}^t
                 \end{array} \right].$$
We observe that $\mathbf{y_{k+1}}^t$ and $\mathbf{1}^t+ \mathbf{y_{k+1}}^t$, which are opposites, are appended in the matrix $B_k$. Hence, every row of $B_k$ enters in $B_{k+1}$ twice, once with the coordinate $0$ and once with the coordinate $1$ attached in the row. By the inductive hypothesis $B_k$ consists of the elements of $\mathbb{Z}_2^k$. Therefore, the procedure for building $B_{k+1}$ amounts into forming the cartesian product $\mathbb{Z}_2^k \times \mathbb{Z}_2 = \mathbb{Z}_2^{k+1}$. Hence, $B_{k+1}$ is a permutation of $A_{k+1}$ and the proof is complete.
\end{proof}

\subsection*{The generating function of the trajectory $\{T^k(n)\}_{k=0}^\infty$}
Given a positive integer $n$, we are now interested in the generating function of the corresponding trajectory $\{T^k(n)\}_{k=0}^\infty$, that is the power series $\sum_{n=1}^{\infty} T^k(n) \cdot x^n$. The classical rational-transcendental dichotomy asserts that if a power series with integer coefficients converges in the unit disc, then either it defines a rational function or it admits the unit circle as a natural boundary (see \cite{Fatou} and \cite{Carl}). Using Corollary \ref{cor.Collatz-function}, we can prove that the generating function is a rational one and it has poles at the $2^k$-th roots of unity.

\begin{theorem}
For any $k=1,2,\ldots$, the series $\sum_{n=1}^{\infty} T^k(n) \cdot x^n$ converges for any $x$ with $\abs{x}<1$ and the sum is given by the following rational function:
$$\sum_{n=1}^{\infty} T^k(n) \cdot x^n = \frac{\sum_{\upsilon=1}^{2^k-1}T^k(\upsilon) x^\upsilon+x^{2^k} + \sum_{\upsilon=1}^{2^k-1}\overline{T^k(\upsilon)} x^{2^k+\upsilon}}{(1-x^{2^k})^2},$$
where $\overline{T^k(\upsilon)} =3^{d_{\upsilon k}} -T^k(\upsilon)$ and $\overline{T^k(\upsilon)}>0$ for all $k\in\mathbb{N}$ and $\upsilon=1,2,\ldots, 2^k-1$.
\end{theorem}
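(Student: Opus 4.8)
The plan is to evaluate the series by sorting the terms according to the residue of $n$ modulo $2^k$, and to treat convergence and the positivity claim as two separate, smaller tasks. For convergence, by Theorem~\ref{th.formula_for_T^k} the coefficient of $n$ in $T^k(n)$ is $3^{\sum_{i=1}^k x_{ni}}/2^k\le (3/2)^k$, so $T^k(n)\le (3/2)^k n+c_k$ for a constant $c_k$ depending only on $k$. Thus the coefficients grow at most linearly in $n$, the power series has radius of convergence $1$, and it converges absolutely for $\abs{x}<1$ by comparison with $\sum_n n\,\abs{x}^n$.

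For the main identity, I would fix $k$ and write each $n\ge1$ as $n=2^kp+\upsilon$ with $0\le\upsilon<2^k$, so that
$$\sum_{n\ge1}T^k(n)x^n=\sum_{\upsilon=0}^{2^k-1}\sum_{p}T^k(2^kp+\upsilon)\,x^{2^kp+\upsilon}.$$
For $\upsilon\ne0$, Corollary~\ref{cor.Collatz-function} gives $T^k(2^kp+\upsilon)=3^{d_{\upsilon k}}p+T^k(\upsilon)$, so with $w=x^{2^k}$ and the elementary sums $\sum_p w^p=1/(1-w)$, $\sum_p p\,w^p=w/(1-w)^2$, the inner sum becomes
$$x^\upsilon\sum_{p\ge0}\big(3^{d_{\upsilon k}}p+T^k(\upsilon)\big)w^p=x^\upsilon\,\frac{T^k(\upsilon)+\big(3^{d_{\upsilon k}}-T^k(\upsilon)\big)w}{(1-w)^2},$$
whose coefficient of $w$ is exactly $\overline{T^k(\upsilon)}$. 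For $\upsilon=0$ the same corollary gives $T^k(2^kp)=p$ (the sum running over $p\ge1$), contributing $w/(1-w)^2=x^{2^k}/(1-x^{2^k})^2$. Placing every piece over the common denominator $(1-x^{2^k})^2$ and collecting the monomials $x^\upsilon$ and $x^{2^k+\upsilon}$ reproduces the stated rational function.

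The positivity of $\overline{T^k(\upsilon)}=3^{d_{\upsilon k}}-T^k(\upsilon)$ for $1\le\upsilon\le2^k-1$ is where I expect the real difficulty. The homogeneous part $3^{d_{\upsilon k}}\upsilon/2^k$ is already $<3^{d_{\upsilon k}}$ since $\upsilon<2^k$, but the additive part $\phi_{\upsilon k}/2^k$ must be controlled, and a crude bound on $\phi_{\upsilon k}$ fails exactly when $\upsilon$ is close to $2^k$, because then the relationship between $\upsilon$ and the positions of the tripling steps becomes essential. To handle this I would introduce, along the trajectory $m_i=T^i(\upsilon)$ with $d_i=\sum_{j\le i}x_{\upsilon j}$, the quantity $v_i=(m_i+1)/3^{d_i}$, and record the exact recursion $v_i=\tfrac12 v_{i-1}$ at an odd step and $v_i=\tfrac12 v_{i-1}+\tfrac12\,3^{-d_{i-1}}$ at an even step, which follow from $m_i+1=\tfrac32(m_{i-1}+1)$ and $m_i+1=\tfrac12(m_{i-1}+2)$ respectively. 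The target $T^k(\upsilon)<3^{d_{\upsilon k}}$ is precisely $v_k\le1$.

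The clean way to obtain $v_k\le1$ is to prove the sharper bound $v_j(m)\le\lfloor m/2^j\rfloor+1$ for every positive integer $m$ and every $j$, by induction on $j$. Using that Corollary~\ref{cor.Collatz-function} yields $v_j(m)=\lfloor m/2^j\rfloor+v_j(m\bmod 2^j)$, it suffices to control residues, and the inductive step splits by parity: for an even residue $\upsilon=2\upsilon'$ one has $v_j(\upsilon)=v_{j-1}(\upsilon')$, while for an odd residue $v_j(\upsilon)=\tfrac13 v_{j-1}(m_1)$ with $m_1=(3\upsilon+1)/2<3\cdot2^{j-1}$, so that $\lfloor m_1/2^{j-1}\rfloor\le2$ and the inductive hypothesis gives $v_j(\upsilon)\le\tfrac13(2+1)=1$. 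Specializing to residues $\upsilon\in\{1,\dots,2^k-1\}$ then delivers $v_k(\upsilon)\le1$, i.e. $T^k(\upsilon)\le 3^{d_{\upsilon k}}-1$, which is even slightly stronger than the asserted strict positivity $\overline{T^k(\upsilon)}>0$.
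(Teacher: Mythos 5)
Your proposal is correct, and the identity/convergence part coincides with the paper's own proof: the same decomposition over residue classes $n=2^kp+\upsilon$ and the same geometric-series computations via Corollary \ref{cor.Collatz-function} (your separate linear-growth convergence bound $T^k(n)\le(3/2)^kn+c_k$ from Theorem \ref{th.formula_for_T^k} is a harmless addition; the paper gets convergence from summing the finitely many convergent sub-series). The positivity of $\overline{T^k(\upsilon)}$ is where you genuinely depart. The paper inducts on $k$, splitting on the parity of $T^k(\upsilon)$ --- i.e.\ it peels off the \emph{last} Collatz step, $T^{k+1}=T\circ T^k$ --- and in the inductive step it must enlarge the range of residues from $[0,2^k)$ to $[0,2^{k+1})$, which it does through the identity $T^k(2^k+b)=3^{d_{bk}}+T^k(b)$. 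You instead prove the stronger all-integer statement $v_j(m)\le\lfloor m/2^j\rfloor+1$ for the potential $v_j(m)=(T^j(m)+1)/3^{d_j}$, by induction on $j$ peeling off the \emph{first} Collatz step (parity of $m$ itself), with the quotient absorbed by $v_j(m)=\lfloor m/2^j\rfloor+v_j(m\bmod 2^j)$ --- the same structural fact from Corollary \ref{cor.Collatz-function}, used in the opposite direction. Your route buys a very clean inductive step (the odd case closes with the single estimate $m_1=(3\upsilon+1)/2<3\cdot2^{j-1}$, hence $v_j(\upsilon)\le\tfrac13(2+1)=1$) and states the bound in the sharp integer form $T^k(\upsilon)\le 3^{d_{\upsilon k}}-1$; note this is not actually stronger than the paper's conclusion, since for integers strict inequality $T^k(\upsilon)<3^{d_{\upsilon k}}$ is equivalent to it. Two small points to make explicit in a final write-up, neither of which is a gap: the reduction to residues needs the convention $v_j(0)=1$ (equivalently, handle multiples of $2^j$ directly via $T^j(2^jp)=p$, where your bound holds with equality), and the even-residue case should end with the one-line appeal to the inductive hypothesis $v_{j-1}(\upsilon')\le\lfloor\upsilon'/2^{j-1}\rfloor+1=1$, which you leave implicit.
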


\begin{proof}
Firstly, we write:
$$\sum_{n=1}^{\infty} T^k(n) \cdot x^n = \sum_{\upsilon=0}^{2^k-1}\left(\sum_{n=\upsilon\mod2^k} T^k(n) \cdot x^n \right) = \sum_{\upsilon=0}^{2^k-1} A^k_\upsilon (x).$$
Therefore, it suffices to find the sum of the series $A^k_\upsilon (x)$ for any $\upsilon = 0,1,2,\ldots , 2^k-1$.

We start with the case where $\upsilon=0$. Then, by Corollary \ref{cor.Collatz-function}, we obtain:
$$
  A^k_0(x)=  \sum_{n=2^k \cdot p, p=1,2,\ldots} T^k(n) \cdot x^n = \sum_{p=1}^{\infty} p \cdot \left(x^{2^k}\right)^p = \frac{x^{2^k}}{(1-x^{2^k})^2},
$$
for any $x$ with $\abs{x}<1$.

Similarly, for $\upsilon>0$, by Corollary \ref{cor.Collatz-function} we obtain:
\begin{align*}
  A^k_\upsilon (n) =&   \sum_{n=2^k \cdot p+\upsilon, p=1,2,\ldots} T^k(n) \cdot x^n = \sum_{p=0}^{\infty} \left(3^{d_{\upsilon k}} \cdot p +T^k(\upsilon) \right) \cdot x^{2^k \cdot p +\upsilon}\\
  = &  x^\upsilon \cdot \sum_{p=0}^{\infty} \left(3^{d_{\upsilon k}}  p +T^k(\upsilon) \right) \cdot \left(x^{2^k}\right)^p\\
 =& x^\upsilon \cdot \frac{(3^{d_{\upsilon k}}-T^k(\upsilon)) x^{2^k} + T^k(\upsilon)}{(1-x^{2^k})^2}
\end{align*}
for any $x\in\mathbb{R}$ with $\abs{x}<1$. Setting $\overline{T^k(\upsilon)}= 3^{d_{\upsilon k}}-T^k(\upsilon)$, we obtain:
$$ A^k_\upsilon (n) = x^\upsilon \cdot \frac{\overline{T^k(\upsilon)} x^{2^k} + T^k(\upsilon)}{(1-x^{2^k})^2} = \frac{T^k(\upsilon) x^\upsilon +\overline{T^k(\upsilon)} x^{2^k+\upsilon}}{(1-x^{2^k})^2}.$$
It follows that the series $\sum_{n=1}^{\infty} T^k(n) \cdot x^n$ converges for any $x\in\mathbb{R}$ with $\abs{x} <1$ and its sum is given by:
\begin{align*}
  \sum_{n=1}^{\infty} T^k(n) \cdot x^n = & \sum_{\upsilon=0}^{2^k-1} A^k_\upsilon (x) \\
  = & \frac{x^{2^k}}{(1-x^{2^k})^2}+ \sum_{\upsilon=1}^{2^k-1} \frac{T^k(\upsilon) x^\upsilon +\overline{T^k(\upsilon)} x^{2^k+\upsilon}}{(1-x^{2^k})^2}\\
=& \frac{x^{2^k}}{(1-x^{2^k})^2}+  \frac{\sum_{\upsilon=1}^{2^k-1}\left(T^k(\upsilon) x^\upsilon +\overline{T^k(\upsilon)} x^{2^k+\upsilon}\right)}{(1-x^{2^k})^2} \\
=& \frac{\sum_{\upsilon=1}^{2^k-1}T^k(\upsilon) x^\upsilon  +x^{2^k} + \sum_{\upsilon=1}^{2^k-1}\overline{T^k(\upsilon)} x^{2^k+\upsilon}}{(1-x^{2^k})^2}.
\end{align*}

Finally, by induction on $k$ we show that $\overline{T^k(\upsilon)} = 3^{d_{\upsilon k}} - T^k(\upsilon) >0$ for all $k\ge 1$ and $\upsilon =0,1,\ldots, 2^k-1$. Indeed, for $k=1$ and $\upsilon=0 ,1$, the result is clear. Assume that the desired property holds true for some positive integer $k$ and for all $\upsilon =0,1,\ldots , 2^k-1$. We will prove the result for $k+1$. We distinguish two cases.

\begin{description}
  \item[Case I] If $T^k(\upsilon)$ is even, then $T^{k+1}(\upsilon) = \frac{T^k(\upsilon)}{2}$. We also have $x_{\upsilon,k+1}=0$ and thus $d_{\upsilon,k+1}=d_{nk}$. Consequently, we have to show that
      $$T^k(\upsilon) < 2\cdot 3^{d_{\upsilon k}} \quad \forall \upsilon=0,1,\ldots, 2^{k+1}-1.$$
      For $\upsilon=0,1,\ldots, 2^k-1$ the above inequality follows immediately by the inductive hypothesis. For $\upsilon=2^k, 2^k+1, \ldots, 2^{k+1}-1$, we write $\upsilon = 2^k + b$, where $0\le b \le 2^k-1$. Then $d_{\upsilon k}= d_{bk}$ (by Proposition \ref{prop.structure-x_kn}). By Corollary \ref{cor.Collatz-function} and the inductive hypothesis, we obtain
      $$T^k(n) = 3^{d_{bk}} +T^k(b) < 2\cdot 3^{d_{bk}} =2\cdot 3^{d_{\upsilon k}}.$$
  \item[Case II] If $T^k(\upsilon)$ is odd, then $T^{k+1}(\upsilon) = \frac{3T^k(\upsilon)+1}{2}$. We also have $x_{\upsilon,k+1}=1$ and thus $d_{\upsilon,k+1}=d_{nk}+1$. Consequently, we have to show that
      $$\frac{3T^k(\upsilon)+1}{2} < 3\cdot 3^{d_{\upsilon k}} \quad \forall \upsilon=0,1,\ldots, 2^{k+1}-1.$$
      For $\upsilon=0,1,\ldots, 2^k-1$ the above inequality follows immediately by the inductive hypothesis. For $\upsilon=2^k, 2^k+1, \ldots, 2^{k+1}-1$, we write $\upsilon = 2^k + b$, where $0\le b \le 2^k-1$. Then $d_{\upsilon k}= d_{bk}$ (by Proposition \ref{prop.structure-x_kn}). By Corollary \ref{cor.Collatz-function} and the inductive hypothesis, we obtain
      $$T^k(n) = 3^{d_{bk}} +T^k(b) \le 2\cdot 3^{d_{bk}} -1.$$
      Hence.
      $$\frac{3T^k(\upsilon)+1}{2} \le 3\cdot 3^{d_{bk}} -1 < 3\cdot 3^{d_{\upsilon k}}.$$
\end{description}
\end{proof}

\begin{example}
  For $k=1$, $k=2$ we have respectively:
$$\sum_{n=1}^{\infty} T(n) \cdot x^n = \frac{2x+x^2 +x^3}{(1-x^2)^2}$$
$$\sum_{n=1}^{\infty} T^2(n) \cdot x^n = \frac{x+2x^2 +8x^3+x^4 +2x^5 +x^6 +x^7}{(1-x^4)^2}.$$
\end{example}

\section{A review of density results}\label{sec.density_theorems}
The Collatz conjecture is equivalent to the statement that for any integer $n>1$ there exists $k$ (depending on $n$) such that $T^k(n)<n$. In this setting, Terras \cite{terras} and Everett \cite{ever} proved that the above statement is true for almost every positive integer $n$. More precisely, the set $M=\{n\in \mathbb{N}^* \mid (\exists k)(T^k(n)<n) \}$ has asymptotic density $1$, i.e.
$$\lim_{N\to \infty} \frac{\sharp\{n\in M \mid n<N\}}{N} =1.$$
Allouche \cite{allou} strengthened this result by showing that the set $M_\theta = \{n\in \mathbb{N}^* \mid (\exists k)(T^k(n)<n^\theta) \}$ has asymptotic density $1$, i.e. for almost every $n$ there is $k$ such that $T^k(n) < n^\theta$, for any fixed constant $\theta >0.869$. Furthermore, Korec \cite{kor} showed that the previous result remains valid for any constant $\theta > \frac{\log 3}{\log 4} \approx 0.7924$. Finally, Tao proved that if $f\colon \mathbb{N}^* \to \mathbb{R}$ is any function with $\lim_{N\to \infty} f(N) = \infty$, then the set $M_f = \{n\in \mathbb{N}^* \mid (\exists k)(T^k(n)<f(n)) \}$ has logarithmic density $1$ (see \cite{tao} for details).

For any positive integers $n,k$ we set
$$S_k(n) = x_{n0} + x_{n1} + \ldots +x_{n,k-1} = \textrm{card} \left\{ i\le k-1 \mid x_{ni} =1 \right\}.$$
By Theorem \ref{th.rows_of parity matrix}, the rows of the matrix $B_k$ form a permutation of the elements of $\mathbb{Z}_2^k$. Hence, it follows immediately that for any $d>0$ one has:
\begin{align*}
  \mathbb{P} \left(\left\{n\le 2^k \mid S_k(n) \le d k\right\} \right) = & \frac{1}{2^k} \left( \binom{k}{0} +\binom{k}{1} + \ldots +\binom{k}{[dk]} \right) \\
  = & \sum_{j=0}^{[dk]} \binom{k}{j} \frac{1}{2^j} \cdot \frac{1}{2^{k-j}}\\
=& F\left([dk] ; k, \frac{1}{2} \right),
\end{align*}
where $F(\cdot ; k, \frac{1}{2})$ stands for the cumulative distribution function of the binomial distribution with parameters $k$ and $p=\frac{1}{2}$ and $[dk]$ is the greatest integer less than or equal to $dk$.

\begin{lemma}
  For any real number $d\in \left(\frac{1}{2}, 1\right)$ we have
  $$\lim_{k\to \infty} \mathbb{P} \left(\left\{n\le 2^k \mid S_k(n) \le d k\right\} \right) =1.$$
\end{lemma}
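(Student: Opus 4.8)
The plan is to reduce the statement to a standard concentration estimate for the binomial distribution. The computation immediately preceding the lemma already identifies the probability in question with the value $F([dk]; k, \frac{1}{2})$ of the cumulative distribution function of the binomial law $\mathrm{Bin}(k, \frac{1}{2})$, this identification resting on Theorem \ref{th.rows_of parity matrix} (the rows of $B_k$ being a permutation of all of $\mathbb{Z}_2^k$). Thus, if I let $X_k$ denote a random variable distributed according to $\mathrm{Bin}(k, \frac{1}{2})$, the quantity to be estimated is exactly $\mathbb{P}(X_k \le [dk])$, and since $X_k$ takes only integer values this equals $\mathbb{P}(X_k \le dk)$.

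First I would record the elementary moments $\mathbb{E}[X_k] = \frac{k}{2}$ and $\mathrm{Var}(X_k) = \frac{k}{4}$. The decisive structural observation is that, because $d > \frac{1}{2}$, the threshold $dk$ lies strictly above the mean by the linearly growing amount $(d - \frac{1}{2})k$, whereas the spread of $X_k$ about its mean is only of order $\sqrt{k}$. Hence the event $\{X_k > dk\}$ is a large-deviation event whose probability must vanish, and this is the mechanism behind the limit.

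To make this precise I would apply Chebyshev's inequality to the centred variable $X_k - \frac{k}{2}$, noting that $\{X_k > dk\}$ is contained in the event that $X_k$ deviates from its mean by at least $(d - \frac{1}{2})k$:
$$\mathbb{P}(X_k > dk) \le \mathbb{P}\left(\left| X_k - \frac{k}{2}\right| \ge \left(d - \frac{1}{2}\right) k\right) \le \frac{\mathrm{Var}(X_k)}{\left(d - \frac{1}{2}\right)^2 k^2} = \frac{1}{4\left(d-\frac{1}{2}\right)^2 \, k}.$$
Since the right-hand side tends to $0$ as $k \to \infty$, the complementary probability $\mathbb{P}(X_k \le dk) = F([dk]; k, \frac{1}{2})$ tends to $1$, which is precisely the assertion of the lemma.

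A remark on where the difficulty lies: there is no genuine obstacle here, since the essential work has already been carried out in converting the counting problem into an evaluation of the binomial cumulative distribution function. The only point requiring care is the honest use of the hypothesis $d > \frac{1}{2}$, which is exactly what guarantees a gap growing linearly in $k$ between the threshold and the mean; the upper constraint $d < 1$ plays no role in the argument (for $d \ge 1$ the probability is trivially equal to $1$). If a quantitative rate were desired, a Chernoff or Hoeffding bound would replace the polynomial decay above by an exponential one of the form $\exp\!\left(-2\left(d-\frac{1}{2}\right)^2 k\right)$, but Chebyshev's inequality already suffices for the stated limit.
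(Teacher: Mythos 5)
Your proof is correct, but it takes a genuinely different route from the paper's. Both arguments begin from the same identification, established just before the lemma via Theorem \ref{th.rows_of parity matrix}, of the probability with the binomial cumulative distribution value $F\left([dk];k,\tfrac{1}{2}\right)$. From there the paper bounds the upper tail by the Chernoff-type inequality $\Pr(X\ge n)=F\left(k-n;k,\tfrac{1}{2}\right)\le \exp\left(-k\,D\left(\tfrac{n}{k}\,\|\,\tfrac{1}{2}\right)\right)$ and then verifies, by a limit computation, that the relative entropy evaluated at $\tfrac{[dk]+1}{k}$ converges to the positive constant $d\log 2d+(1-d)\log 2(1-d)$, so the tail decays exponentially in $k$. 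You instead apply Chebyshev's inequality to the centred variable, using only that the threshold $dk$ exceeds the mean $\tfrac{k}{2}$ by the linearly growing gap $\left(d-\tfrac{1}{2}\right)k$ while the variance is $\tfrac{k}{4}$; this yields the polynomial bound $\Pr(X_k>dk)\le \frac{1}{4\left(d-\frac{1}{2}\right)^2 k}$, which is weaker as a rate but entirely sufficient for the stated limit, and your reduction from $[dk]$ to $dk$ via integrality of $X_k$ is sound. What each approach buys: yours is more elementary (no entropy bound, no limit computation involving the floor function) and, as you correctly note, does not even use the hypothesis $d<1$, which the paper's entropy expression needs in the form $\tfrac{1}{2}<\tfrac{n}{k}<1$; the paper's argument, on the other hand, gives exponential smallness of the exceptional set, the kind of quantitative decay that is relevant to the density results surveyed in Section \ref{sec.density_theorems}. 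Your closing remark that a Chernoff--Hoeffding bound would recover an exponential rate is exactly the content of the paper's proof.
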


\begin{proof}
  We will use the following inequality for the cumulative distribution function $F$ of the binomial distribution: if $\frac{1}{2} <\frac{n}{k} <1$, then
  $$\textrm{Pr} (X\ge n) =F\left(k-n ; k, \frac{1}{2} \right) \le \exp \left( -k D\left(\frac{n}{k} \| \frac{1}{2} \right) \right)$$
  where $D( a \| p)$ is the relative entropy:
  $$D(a\|p) = a\cdot \log\frac{a}{p} +(1-a) \cdot \log\frac{1-a}{1-p}.$$
  Since $\frac{[dk]+1}{k} > \frac{dk}{k} =d >\frac{1}{2}$, we obtain:
  \begin{align*}
    \textrm{Pr} \left(X >[dk]\right) \le & \textrm{Pr} \left(X \ge [dk]+1\right) \\
    \le  & \exp \left(-k D\left(\frac{[dk]+1}{k} \| \frac{1}{2} \right) \right).
  \end{align*}
  We have
  $$D\left(\frac{[dk]+1}{k} \| \frac{1}{2} \right) = \frac{[dk]+1}{k} \log\left(2 \frac{[dk]+1}{k}\right) + \left(1- \frac{[dk]+1}{k}\right) \log \left( 2 (1-\frac{[dk]+1}{k}) \right).$$
  As $\lim_{k\to \infty} \frac{[dk]+1}{k} =d$ it follows that
  $$\lim_{k\to \infty} D\left(\frac{[dk]+1}{k} \| \frac{1}{2} \right) = d \log 2d +(1-d) \log 2(1-d) >0 \text{ for all } d\in \left(\frac{1}{2},1\right). $$
  Hence,
  $$\lim_{k\to \infty} \exp \left(-k D\left(\frac{[dk]+1}{k} \| \frac{1}{2} \right) \right) = 0,$$
  and the result follows.
\end{proof}

\section{Collatz-Koopman operators}\label{sec-Koopman_operators}
In this section, we introduce two types of Koopman operators related to the Collatz problem.

\subsection*{The forward Collatz-Koopman operators.} The first one is a composition operator, where we choose as function space of observables (on the state space $\mathbb{N}$) the classical Banach sequence space $\ell_p$, $1\le p\le \infty$. By the definition of the Koopman operator (see Section \ref{sec.introduction}), this operator, denoted by $L_{T,p}$ is defined as follows:
$$ L_{T,p} \colon \ell_p \to \ell_p$$
$$L_{T,p}(x) = x\circ T.$$
Some equivalent and useful reformulations of the operator are described below. First of all, we observe that the map $T\colon \mathbb{N}^* \to \mathbb{N}^*$ is onto and more precisely we have the next lemma whose proof is straightforward.
\begin{lemma}\label{lemma-T-onto}
The $3x+1$-map $T\colon \mathbb{N}^*\to\mathbb{N}^*$ is onto and the following hold:
\begin{enumerate}
    \item for any $k\in\mathbb{N}^*$, $T(i)=3k \Leftrightarrow i=6k$;
    \item for any $k\in\mathbb{N}$, $T(i)=3k+1 \Leftrightarrow i=6k+2$;
    \item for any $k\in\mathbb{N}$, $T(i)=3k+2 \Leftrightarrow i=6k+4$ or $i=2k+1$.
\end{enumerate}
\end{lemma}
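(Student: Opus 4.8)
The plan is to compute, for an arbitrary target value, the complete set of its preimages under $T$ by splitting according to the parity of the preimage, and then to read off the three congruence cases from the residue of the target modulo $3$. Surjectivity will fall out of the analysis for free, since I will exhibit at least one preimage of every positive integer.

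First I would fix $m\in\mathbb{N}^*$ and search for all $i$ with $T(i)=m$. Because $T$ acts by two different formulas on even and odd inputs, there are two mutually exclusive possibilities. If $i$ is even, write $i=2j$; then $T(i)=j$, so $T(i)=m$ holds precisely when $i=2m$. This even preimage exists for every $m$, which already shows that $T$ is onto. If instead $i$ is odd, then $T(i)=\frac{3i+1}{2}$, and the equation $T(i)=m$ is equivalent to $3i+1=2m$, i.e. $i=\frac{2m-1}{3}$. This produces a genuine (positive, odd) preimage if and only if $3\mid 2m-1$, and a one-line computation modulo $3$ shows that this divisibility is equivalent to $m\equiv 2\pmod 3$.

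Then I would distinguish the three cases according to $m\bmod 3$. When $m=3k$ or $m=3k+1$ we have $m\not\equiv 2\pmod 3$, so no odd preimage exists and the unique preimage is the even one $i=2m$, which equals $6k$ and $6k+2$ respectively; this gives assertions (1) and (2). When $m=3k+2$ we have $m\equiv 2\pmod 3$, so both preimages occur: the even one $i=2m=6k+4$ and the odd one $i=\frac{2m-1}{3}=\frac{6k+3}{3}=2k+1$, which establishes (3). The only points needing a moment of care are the congruence $m\equiv 2\pmod 3$ governing the existence of the odd preimage and the verification that the resulting value $2k+1$ is indeed odd (which it visibly is); beyond this bookkeeping the argument is an entirely elementary case analysis with no real obstacle.
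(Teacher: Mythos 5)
Your proof is correct and is precisely the ``straightforward'' argument the paper alludes to when it states the lemma without proof: split preimages by parity, note the even preimage $i=2m$ always exists (giving surjectivity), and observe that the odd preimage $i=\frac{2m-1}{3}$ exists exactly when $m\equiv 2 \pmod 3$. Nothing is missing; the parity of $2k+1$ and the congruence criterion are both verified, so the case analysis is complete.
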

Consequently, an alternative definition of $L_{T,p}$ is the following: for every $x=(x_n)\in \ell_p$ one has:
$$L_{T,p} (x) =L_{T,p} \left( (x_n)_{n=1}^\infty \right) = (a_n)_{n=1}^\infty ,$$ where for any $k\in \mathbb{N}^*$,
$$a_{3k} = x_{6k}, \quad a_{3k+1} =x_{6k+2} \quad \text{and} \quad a_{3k+2}= x_{6k+4}+x_{2k+1}.$$

Furthermore, in the case $p\in[1,\infty)$, we have that $x=(x_n)_{n=1}^\infty =\sum_{n=1}^\infty x_n e_n \in \ell_p$, where $(e_n)$ stands for the usual vector basis of $\ell_p$. Then the above operator can be given by the formula:
\begin{align*}
  L_{T,p} (x) = L_{T,p} \left( \sum_{n=1}^\infty x_n e_n \right) & = \sum_{n=1}^\infty x_n e_{T(n)} \\
   & = \sum_{k=1}^{\infty} x_{6k} e_{3k} +\sum_{k=0}^{\infty} x_{6k+2} e_{3k+1} + \sum_{k=0}^{\infty} (x_{6k+4}+x_{2k+1})e_{3k+2}.
\end{align*}
Alternatively, the action of the operator on the basic vectors can be described by the relation $L_{T,p}(e_n) = e_{T(n)}$, and then $L_{T,p}$ can be extended linearly and continuously to give an operator to the whole space $\ell_p$.

Finally, for sequence spaces having a Schauder basis, bounded linear operators have a useful representation by an infinite matrix. For the operator $L_{T,p}$, $1\le p <\infty$, the corresponding matrix is the adjacency matrix of the forward Collatz graph (see Figure 1).

\begin{figure}[H]
\centering
\includegraphics[width=8cm,height= 4cm]{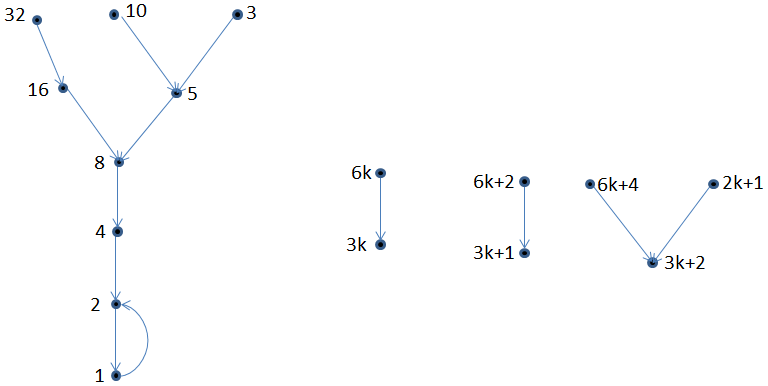}
\caption{Some parts of the forward Collatz graph.}
\end{figure}

The next result establishes that the above operators are well-defined (i.e. $L_{T,p}(x)\in\ell_p$ for any $x\in \ell_p$) and bounded and it also gives the exact value of their norm.

\begin{theorem}\label{th.norm-forward}
  The operators $L_{T,p}\colon\ell_p\to\ell_p$, $1\le p\le\infty$ are bounded and furthermore the norm of $L_{T,p}$ is given by $\|L_{T,p}\| = 2^{1-\frac{1}{p}}$. (The formula is valid in the case $p=\infty$, where we have $\|L_{T,\infty}\|=2$).
\end{theorem}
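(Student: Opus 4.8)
The plan is to exploit the matrix description of $L_{T,p}$ supplied by the preimage analysis in Lemma~\ref{lemma-T-onto}, together with a single convexity inequality. Writing $L_{T,p}(x)=(a_m)_{m=1}^\infty$, the alternative formula for the operator shows that $a_m=\sum_{n\colon T(n)=m}x_n$; that is, the $m$-th coordinate of the image is the sum of the coordinates of $x$ indexed by the $T$-preimages of $m$. Lemma~\ref{lemma-T-onto} tells us exactly how many such preimages there are: setting $d_m=\mathrm{card}\,\{n\colon T(n)=m\}$, we have $d_m=2$ when $m\equiv 2\pmod 3$ (with preimages $6k+4$ and $2k+1$ if $m=3k+2$) and $d_m=1$ otherwise. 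The whole argument rests on the fact that $d_m\le 2$ for every $m$, with equality attained on the rows $m\equiv 2\pmod 3$.

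For the upper bound I would treat $1\le p<\infty$ uniformly. Applying the power-mean (equivalently Hölder) inequality $\apol{\sum_{i=1}^{d}y_i}^p\le d^{\,p-1}\sum_{i=1}^{d}\apol{y_i}^p$ to each coordinate gives
\[
\apol{a_m}^p=\Bigl|\sum_{n\colon T(n)=m}x_n\Bigr|^p\le d_m^{\,p-1}\sum_{n\colon T(n)=m}\apol{x_n}^p .
\]
Summing over $m$ and interchanging the order of summation --- legitimate since each index $n$ contributes to exactly one row, namely $m=T(n)$ --- I would obtain $\|L_{T,p}x\|_p^p\le\sum_n d_{T(n)}^{\,p-1}\apol{x_n}^p\le 2^{p-1}\|x\|_p^p$, using $d_{T(n)}\le 2$. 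This simultaneously shows that $L_{T,p}$ is well defined on $\ell_p$ and that $\|L_{T,p}\|\le 2^{1-1/p}$. The case $p=\infty$ is handled separately but even more directly: each coordinate of the image is a sum of at most two coordinates of $x$, so $\|L_{T,\infty}x\|_\infty\le 2\|x\|_\infty$, i.e. $\|L_{T,\infty}\|\le 2$, which is the asserted value of $2^{1-1/p}$ at $p=\infty$.

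For sharpness I would exhibit a single test vector on which the bound is attained (so the norm is a maximum, not merely a supremum). Fixing any $m\equiv 2\pmod 3$, say $m=3k+2$ with preimages $6k+4$ and $2k+1$, set $x=e_{6k+4}+e_{2k+1}$; these two basis vectors are distinct, since one index is even and the other odd. Then $\|x\|_p=2^{1/p}$, while $L_{T,p}(x)=e_{T(6k+4)}+e_{T(2k+1)}=2e_{3k+2}$, whence $\|L_{T,p}x\|_p=2$. The ratio $\|L_{T,p}x\|_p/\|x\|_p=2^{1-1/p}$ then forces $\|L_{T,p}\|\ge 2^{1-1/p}$, completing the proof; for $p=\infty$ the analogous choice is any $x$ with $x_{6k+4}=x_{2k+1}=1$.

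I do not expect a genuine obstacle here: the result is essentially a tight instance of the $\ell_p$ estimate for a $0$--$1$ matrix with bounded row and column sums. The one point requiring care is the attainment of equality. The power-mean inequality is an equality precisely when the summands coincide, which is exactly why the symmetric test vector $e_{6k+4}+e_{2k+1}$ (equal coefficients on the two preimages) saturates the bound and why no vector can exceed it. Keeping track of which rows carry two preimages versus one, and confirming that the two preimages of a doubly covered row are genuinely distinct indices, are the only bookkeeping details to verify.
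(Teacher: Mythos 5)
Your proof is correct and follows essentially the same route as the paper's: the upper bound via the convexity inequality $\apol{\sum y_i}^p\le d^{p-1}\sum\apol{y_i}^p$ (the paper writes the $d=2$ case $(a+b)^p\le 2^{p-1}(a^p+b^p)$ explicitly) combined with disjointness of the preimage sets, and the lower bound via a test vector supported on the two preimages of a residue $2\bmod 3$ point (the paper uses $e_1+e_4$, which is your choice with $k=0$). Your preimage-counting formulation is just a mildly more abstract phrasing of the paper's explicit coordinate decomposition, so there is nothing substantive to flag.
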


\begin{proof}
  The case $p=\infty$ is straightforward. For $p\in[1,\infty)$, we use the inequality $(a+b)^p \le 2^{p-1} (a^p+b^p)$ ($a,b\ge0$) to obtain:
  \begin{align*}
    \|L_{T,p}(x)\|^p  = & \Big\|\sum_{k=1}^{\infty} x_{6k} e_{3k} + \sum_{k=0}^{\infty} x_{6k+2} e_{3k+1}  +\sum_{k=0}^{\infty} (x_{6k+4}+x_{2k+1})e_{3k+2}\Big\| ^p  \\
    = & \sum_{k=1}^{\infty} \abs{x_{6k}}^p + \sum_{k=0}^{\infty} \abs{x_{6k+2}}^p + \sum_{k=0}^{\infty} \abs{x_{6k+4}+x_{2k+1}}^p \\
    \le & \sum_{k=1}^{\infty} \abs{x_{6k}}^p + \sum_{k=0}^{\infty} \abs{x_{6k+2}}^p +2^{p-1} \sum_{k=0}^{\infty} \abs{x_{6k+4}}^p+ 2^{p-1} \sum_{k=0}^{\infty} \abs{x_{2k+1}}^p\\
    \le & 2^{p-1} \|x\|^p.
  \end{align*}
  It follows that $\|L_{T,p}\| \le 2^{1-\frac{1}{p}}$. In order to establish the reverse inequality, we consider the vector $x=e_1+e_4$. Then, $L_{T,p}(x)=e_{T(1)}+e_{T(4)}=2e_2$. Thus,
  $$\|L_{T,p}\| \ge \frac{1}{\|x\|} \|L_{T,p}(x)\| = \frac{2}{2^{1/p}} = 2^{1-\frac{1}{p}}.$$

\end{proof}

\subsection*{The backward Collatz-Koopman operators.} Similarly, the second type of Collatz-Koopman operators, denoted by $B_{T,p}\colon\ell_p\to\ell_p$ ($p\in[1,\infty)$), is defined so that the corresponding matrix representation coincides with the adjacency matrix of the backward Collatz map. This implies that:
$$B_{T,p}(e_{3k}) = e_{6k} \,\, (k\ge 1), \quad B_{T,p}(e_{3k+1})=e_{6k+2} \quad \text{and} \quad B_{T,p}(e_{3k+2})=e_{6k+4}+e_{2k+1} \,\, (k\ge 0).$$
Therefore, for any $x=(x_n)_{n=1}^\infty \in \ell_p$ we have
$$B_{T,p}(x)= \sum_{k=1}^{\infty} x_{3k} e_{6k} +\sum_{k=0}^{\infty} x_{3k+1}e_{6k+2} + \sum_{k=0}^{\infty} x_{3k+2}e_{6k+4}+\sum_{k=0}^{\infty} x_{3k+2} e_{2k+1}.$$
Equivalently, $B_{T,p}(x)=B_{T,p}((x_n)_{n=1}^\infty)=(a_n)_{n=1}^\infty$, where,
$$a_{6k}=x_{3k}, \quad a_{6k+2}=x_{3k+1}, \quad a_{6k+4}=a_{2k+1}=x_{3k+2}.$$
The last equation makes also sense in the case of $\ell_\infty$, so the operators $B_{T,p}$ are defined for any $p\in[1,\infty]$.

The backward operators are closely related to the forward ones, since they coincide with their dual operators.

\begin{theorem}
    For any $p\in[1,\infty)$, we have that $L^*_{T,p}= B_{T,q}$ where $q$ is the conjugate index of $p$. In particular, $L^*_{T,2} = B_{T,2}$ and $\|B_{T,p}\| = 2^{\frac{1}{p}}$ for any $p\in [1,\infty]$.
\end{theorem}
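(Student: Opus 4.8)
The plan is to realise $L^*_{T,p}$ as the transpose of the forward adjacency matrix and to recognise that transpose as $B_{T,q}$. First I would fix, for $p\in[1,\infty)$, the standard identification $(\ell_p)^*=\ell_q$ with $\tfrac1p+\tfrac1q=1$, implemented by the pairing $\langle x,y\rangle=\sum_{n=1}^\infty x_n y_n$ for $x\in\ell_p$, $y\in\ell_q$. The adjoint $L^*_{T,p}\colon\ell_q\to\ell_q$ is then characterised by $\langle L_{T,p}(x),y\rangle=\langle x,L^*_{T,p}(y)\rangle$. Because the unit vectors $(e_n)$ span a dense subspace of $\ell_p$ and both sides are continuous, it is enough to verify this identity with $x=e_m$ and $y=e_n$.

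Using the defining relation $L_{T,p}(e_m)=e_{T(m)}$, one has $\langle L_{T,p}(e_m),e_n\rangle=\langle e_{T(m)},e_n\rangle$, which equals $1$ when $T(m)=n$ and $0$ otherwise. Comparing with $\langle e_m,L^*_{T,p}(e_n)\rangle$, the $m$-th coordinate of $L^*_{T,p}(e_n)$ is $1$ precisely for those $m$ with $T(m)=n$, so $L^*_{T,p}(e_n)=\sum_{m:\,T(m)=n}e_m$; in other words $L^*_{T,p}$ is the transpose of the forward operator. The preimages are read off from Lemma \ref{lemma-T-onto}: the only preimage of $3k$ is $6k$, the only preimage of $3k+1$ is $6k+2$, while $3k+2$ has the two preimages $6k+4$ and $2k+1$. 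Hence $L^*_{T,p}(e_{3k})=e_{6k}$, $L^*_{T,p}(e_{3k+1})=e_{6k+2}$ and $L^*_{T,p}(e_{3k+2})=e_{6k+4}+e_{2k+1}$, which is exactly the defining action of $B_{T,q}$. Therefore $L^*_{T,p}=B_{T,q}$, and the self-conjugate case $p=q=2$ gives $L^*_{T,2}=B_{T,2}$.

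For the norm I would use that an operator and its adjoint have the same norm, together with Theorem \ref{th.norm-forward}. Writing $B_{T,q}=L^*_{T,p}$ with $q$ conjugate to $p\in[1,\infty)$ yields $\|B_{T,q}\|=\|L^*_{T,p}\|=\|L_{T,p}\|=2^{1-\frac1p}=2^{\frac1q}$, which establishes the claimed formula for every $q\in(1,\infty]$. The single index this does not reach is $q=1$, since the relation $L^*_{T,p}=B_{T,q}$ forces $p$ finite and hence $q\neq 1$; this endpoint is the only genuinely delicate point. I would close it by a direct computation: as the target coordinates $6k,\,6k+2,\,6k+4,\,2k+1$ are pairwise distinct, one gets $\|B_{T,1}(x)\|_1=\|x\|_1+\sum_{k\ge0}\abs{x_{3k+2}}\le 2\|x\|_1$, with the ratio attained at $x=e_2$, for which $B_{T,1}(e_2)=e_1+e_4$; thus $\|B_{T,1}\|=2=2^{1/1}$. (Equivalently, transposing in the other direction gives $(B_{T,1})^*=L_{T,\infty}$, whence $\|B_{T,1}\|=\|L_{T,\infty}\|=2$.) Relabelling the index then yields $\|B_{T,p}\|=2^{1/p}$ on the full range $p\in[1,\infty]$.
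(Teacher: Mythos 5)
Your proposal is correct and follows essentially the same route as the paper: you identify the adjoint on the basis vectors (the paper phrases this as $L^*_{T,p}(e_n)=e_n\circ L_{T,p}$, you phrase it via the pairing $\langle L_{T,p}x,y\rangle=\langle x,L^*_{T,p}y\rangle$, which is the same computation), you obtain the norm from $\|B_{T,q}\|=\|L^*_{T,p}\|=\|L_{T,p}\|=2^{1-\frac{1}{p}}=2^{\frac{1}{q}}$, and you settle the remaining endpoint $B_{T,1}$ exactly as the paper does, by a direct computation together with the extremal vector $e_2$ (your exact identity $\|B_{T,1}(x)\|_1=\|x\|_1+\sum_{k\ge 0}\lvert x_{3k+2}\rvert$ is in fact slightly sharper than the paper's inequality). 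The one caveat---present in the paper's own proof as well---is that for $p=1$, $q=\infty$ the linear span of $(e_n)$ is dense in $c_0$ but not in $\ell_\infty$, so the reduction to $y=e_n$ should be replaced by fixing an arbitrary $y\in\ell_\infty$ and checking coordinates, $\langle e_m,L^*_{T,1}y\rangle=\langle e_{T(m)},y\rangle=y_{T(m)}=(B_{T,\infty}y)_m$; this one-line repair applies verbatim to both arguments.
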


\begin{proof}
  Firstly, it is easy to see that $B_{T,p} \colon \ell_p\to \ell_p$ is bounded for any $p\in [1,\infty]$. Furthermore, for any $k=0,1,2,\ldots$ and any $x=(x_n)\in \ell_q$ we observe that:
  \begin{align*}
    L^*_{T,p}(e_{3k+2})(x) = & e_{3k+2}\circ L_{T,p} (x) = e_{3k+2} \Big(\sum_{n=1}^{\infty} x_n e_{T(n)}\Big) \\
    = & \sum_{n: T(n)=3k+2} x_n = x_{6k+4} + x_{2k+1} = e_{6k+4}(x) +e_{2k+1}(x)
  \end{align*}
  Thus, $L^*_{T,p}(e_{3k+2})= e_{6k+4} +e_{2k+1}$. Similarly, we can show that $L^*_{T,p} (e_{3k})= e_{6k}$, for any $k\ge 0$, and $L^*_{T,p} (e_{3k+1})= e_{6k+2}$, for any $k\ge 1$. By linearity and boundedness of $B_{T,q}, L^*_{T,p} \colon \ell_q \to \ell_q $, it follows that $L^*_{T,p}$ coincides with $B_{T,q}$.

  Finally, for $p=1$, the operator $B_{T,1}$ is not the dual of $L_{T,\infty}$. However, we can easily prove that the norm of the operator is equal to $2$. Indeed, for any $x=(x_n)\in\ell_1$,
  \begin{align*}
    \|B_{T,1}(x)\| = & \Big\| \sum_{k=1}^{\infty} x_{3k} e_{6k} +\sum_{k=0}^{\infty} x_{3k+1}e_{6k+2} + \sum_{k=0}^{\infty} x_{3k+2}e_{6k+4}+\sum_{k=0}^{\infty} x_{3k+2} e_{2k+1} \Big\| \\
    = & \sum_{k=0}^{\infty} \abs{x_{3k}} +\sum_{k=1}^{\infty} \abs{x_{3k+1}} + \sum_{k=1}^{\infty} \abs{x_{3k+2}}+\sum_{k=1}^{\infty} \abs{x_{3k+2}}  \\
    \le & 2\|x\|.
  \end{align*}
  Therefore, $\|B_{T,1}\| \le 2$. For the reverse inequality, we take the vector $x=e_2$ for which it holds $B_{T,1}(x)=e_1+e_4$. Hence,
  $$\|B_{T,1}\| \ge \frac{1}{\|x\|} \|B_{T,1}(x)\|=\|e_1+e_4\|=2$$
  and the result follows.
  \end{proof}

\section{Spectral radius of the forward and backward Collatz-Koopman operators}\label{sec. spectral radius}
This section contains an estimate of the spectral radius of the Collatz-Koopman operators defined in the previous section. In order to obtain this estimate, we need a series of preliminary results.

\begin{lemma}\label{lemma-number}
  For every $n\in\mathbb{N^*}$ the numbers $p=(2^n-1) \mod 2^n$ satisfy the following properties:
  \begin{enumerate}
    \item [(i)] $T^k(p)= 1 \mod 2$ for every $k=0,1,\ldots , n-1$.
    \item [(ii)] $T^k(p) = 2\mod3$ for every $k=1,2,\ldots, n$.
  \end{enumerate}
\end{lemma}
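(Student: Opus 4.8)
The plan is to reduce both statements to one explicit formula for the iterates. Since $2^n - 1 < 2^n$, the residue $(2^n - 1) \mod{2^n}$ is the number $p = 2^n - 1$ itself, so I would first establish by induction on $k$ the closed form
\begin{equation*}
  T^k(p) = 3^k \cdot 2^{n-k} - 1, \qquad 0 \le k \le n.
\end{equation*}
All of the arithmetic content of the lemma is packed into this single identity.

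The base case $k = 0$ is immediate, since $T^0(p) = p = 2^n - 1 = 3^0 \cdot 2^{n} - 1$. For the inductive step, suppose the formula holds for some $k \le n-1$. The crucial point is that then $n - k \ge 1$, so $2^{n-k}$ is even, the product $3^k 2^{n-k}$ is even, and therefore $T^k(p) = 3^k 2^{n-k} - 1$ is \emph{odd}. Applying the odd branch of the Collatz map gives
\begin{equation*}
  T^{k+1}(p) = \frac{3\, T^k(p) + 1}{2} = \frac{3\bigl(3^k 2^{n-k} - 1\bigr) + 1}{2} = \frac{3^{k+1} 2^{n-k} - 2}{2} = 3^{k+1} \cdot 2^{n-(k+1)} - 1,
\end{equation*}
which is exactly the formula with $k+1$ in place of $k$, closing the induction. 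Conceptually, the substitution $a \mapsto a+1$ conjugates the odd-branch map $a \mapsto (3a+1)/2$ to multiplication by $3/2$, and since the initial value $p + 1 = 2^n$ is a pure power of $2$, the numerator remains divisible by $2$ --- equivalently, each predecessor remains odd --- for precisely the first $n$ steps.

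Granting the closed form, both assertions follow by inspecting residues. For (i), if $0 \le k \le n-1$ then $n - k \ge 1$, so $3^k 2^{n-k}$ is even and $T^k(p) = 3^k 2^{n-k} - 1 \equiv 1 \mod{2}$. For (ii), if $1 \le k \le n$ then $3^k \equiv 0 \mod{3}$, whence $T^k(p) = 3^k 2^{n-k} - 1 \equiv -1 \equiv 2 \mod{3}$.

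I do not anticipate a genuine obstacle here: the heart of the matter is a one-line induction, and the only thing demanding care is the bookkeeping of index ranges --- checking that the predecessor is odd at each step used (which holds exactly for $k \le n-1$) and matching the ranges $0 \le k \le n-1$ in (i) and $1 \le k \le n$ in (ii) against the single formula valid for $0 \le k \le n$. Should one wish to read the lemma as a statement about an arbitrary representative $p \equiv 2^n - 1 \mod{2^n}$ rather than the least one, the same conclusions follow from Corollary \ref{cor.Collatz-function}: for $k \le n$ one has $p \equiv 2^k - 1 \mod{2^k}$, so writing $p = 2^k p' + (2^k - 1)$ and noting $d_{2^k-1,\,k} = k$ gives $T^k(p) = 3^k p' + T^k(2^k - 1) = 3^k(p'+1) - 1 \equiv 2 \mod{3}$, while the first $n$ parities depend only on $p \mod{2^n}$ and therefore coincide with those of $2^n - 1$.
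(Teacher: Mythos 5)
Your proof is correct and takes essentially the same route as the paper: an induction on $k$ yielding a closed form for $T^k(p)$, valid precisely while a factor of $2$ survives, from which both residue claims are read off. The only difference is that the paper runs this induction directly for a general representative $p = 2^n\lambda + 2^n - 1$, obtaining $T^k(p) = 3^k 2^{n-k}\lambda + 3^k 2^{n-k} - 1$ for $0 \le k \le n$ (the intended reading, since the plural ``numbers'' and the later application in Proposition~\ref{prop.spectral} concern all such $p$), whereas you treat $\lambda = 0$ first and then correctly recover the general case via Corollary~\ref{cor.Collatz-function} and the periodicity of the parity sequence.
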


\begin{proof}
  Let $p=2^n\cdot \lambda +2^n-1$, for some $\lambda \in \mathbb{N}$. Using induction, one can show that for every $k=0,1,2,\ldots,n$,
  $$T^k(p) = 3^k 2^{n-k} \lambda + 3^k 2^{n-k} -1.$$
  The desired properties follow easily by the above equation.
\end{proof}

For every $n\in \mathbb{N}$ and every positive integer $m$, we set
$$A^n_m = \{i\in \mathbb{N}^* \mid T^n(i)=m \}.$$
Since $T^n$ in onto for every $n$, we obtain that $A^n_m$ are non empty sets for every $n\in \mathbb{N}$, $m\in\mathbb{N}^*$. Furthermore, since $T^0$ is the identity map on $\mathbb{N}^*$, we have that $\abs{A^0_m} =1$ for every positive integer $m$.  We next summarize the basic properties of the sets $(A^n_m)_{n,m}$.

\begin{proposition}\label{prop.set_A^n_m}
  The following properties hold.
  \begin{enumerate}
    \item [(i)] For every $n\in\mathbb{N}$, the sets $(A^n_m)_{m=1}^\infty$ are pairwise disjoint.
    \item [(ii)] $A^n_m = A^{n-1}_{2m}$ for any $m=0 \mod 3$ or $m=1\mod 3$; $A_m^n = A^{n-1}_{2m} \cup A^{n-1}_{2k+1}$ for every $m =2 \mod 3$, where $m=2k+3$.
    \item [(iii)] $\abs{A^n_m} \le \abs{A^{n+1}_m}$ for every $n,m$.
    \item [(iv)] $A^n_m$ is a finite set, for every $n,m$.
  \end{enumerate}
\end{proposition}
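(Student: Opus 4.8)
The plan is to dispatch the four claims in the order (i), (iv), (ii), (iii), since the branching structure needed for (ii) is cleanest to extract once finiteness is in hand. Claim (i) is immediate: $T^n$ is a single-valued function on $\mathbb{N}^*$, so if some $i$ lay in $A^n_m \cap A^n_{m'}$ we would get $m = T^n(i) = m'$; hence the fibres $(A^n_m)_{m}$ are pairwise disjoint. For (iv) I would use the elementary bound $T(i) \ge i/2$, valid for every $i$ (it is an equality when $i$ is even, and $(3i+1)/2 \ge i/2$ when $i$ is odd). Iterating gives $T^n(i) \ge i/2^n$, so any $i \in A^n_m$ obeys $i \le 2^n m$; thus $A^n_m \subseteq \{1, \ldots, 2^n m\}$ is finite. (Alternatively, finiteness can be read off Theorem \ref{th.formula_for_T^k}, whose leading term already forces $T^k(n) \ge n/2^k$.)

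For (ii) the key device is the ``peel the last iteration'' recursion. Writing $T^n = T \circ T^{n-1}$, we have $T^n(i) = m$ precisely when $y := T^{n-1}(i)$ lies in the $T$-preimage $T^{-1}(\{m\})$, and therefore $A^n_m = \bigcup_{y \in T^{-1}(\{m\})} A^{n-1}_y$. It then remains to read $T^{-1}(\{m\})$ off Lemma \ref{lemma-T-onto}: it equals $\{2m\}$ when $m \equiv 0$ or $1 \pmod 3$, and $\{2m,\, 2k+1\}$ when $m = 3k+2 \equiv 2 \pmod 3$ (I read the displayed ``$m=2k+3$'' as a typo for $m = 3k+2$). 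Substituting these into the union yields exactly the two stated formulas, the union in the second case being disjoint by (i).

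For the monotonicity (iii) I would instead peel the \emph{first} iteration: from $T^{n+1} = T^n \circ T$ we get $T^{n+1}(i) = m \iff T(i) \in A^n_m$, so $A^{n+1}_m = T^{-1}(A^n_m) = \bigcup_{j \in A^n_m} T^{-1}(\{j\})$. This union is disjoint, and by surjectivity of $T$ each $T^{-1}(\{j\})$ is nonempty, whence $\abs{A^{n+1}_m} = \sum_{j \in A^n_m} \abs{T^{-1}(\{j\})} \ge \abs{A^n_m}$. Each argument is short, and the only genuine subtlety is choosing the right recursion for each part: the end-recursion used for (ii) does \emph{not} settle (iii), since it only gives $\abs{A^{n+1}_m} = \sum_{y \in T^{-1}(\{m\})} \abs{A^n_y} \ge \abs{A^n_{2m}}$, comparing with $\abs{A^n_{2m}}$ rather than $\abs{A^n_m}$; it is the front-recursion that keeps the index $m$ fixed. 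I therefore expect the main (mild) obstacle to be recognizing this, together with the routine disjointness-and-nonemptiness bookkeeping that both recursions rely on.
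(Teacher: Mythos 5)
Your proposal is correct, and for parts (i) and (ii) it follows essentially the same route as the paper: there, (i) is dismissed as clear, and (ii) is proved exactly by your end recursion, writing $A^n_m=\{i\in\mathbb{N}^*\mid T(T^{n-1}(i))=m\}$ and reading the preimages off Lemma \ref{lemma-T-onto} (your reading of ``$m=2k+3$'' as a typo for $m=3k+2$ is right). Where you genuinely diverge is in (iii) and (iv). For (iii) the paper does not use the front recursion $A^{n+1}_m=T^{-1}(A^n_m)$ at all; it exhibits the explicit injection $i\mapsto 2i$, noting that $T^{n+1}(2i)=T^n(T(2i))=T^n(i)=m$. Your argument is the abstract form of the same idea (choosing one preimage of each $j\in A^n_m$ amounts to choosing $2j$), so it needs surjectivity and fiber bookkeeping where the paper needs nothing; on the other hand, your formulation makes transparent exactly why the end recursion of (ii) cannot settle (iii), a point the paper passes over in silence. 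For (iv) the paper argues by induction on $n$ using part (ii) and the seed $\abs{A^1_m}\le 2$: each $A^n_m$ is a union of at most two sets from the previous level, hence finite (this also yields the cardinality bound $\abs{A^n_m}\le 2^n$). Your growth bound $T(i)\ge i/2$, giving $A^n_m\subseteq\{1,\ldots,2^n m\}$, is independent of (ii) and localizes the set rather than counting it; both are perfectly adequate, and neither bound is exploited later, since the estimate that actually matters ($c_n\le F_n$) is proved separately in Proposition \ref{prop.spectral}.
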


\begin{proof}
The first assertion is quite clear. For the second one, we observe that
$$A^n_m = \{i\in \mathbb{N}^* \mid T^n(i) = m \} =\{i\in \mathbb{N}^* \mid T(T^{n-1}(i))=m\}.$$
The result now follows easily from Lemma \ref{lemma-T-onto}. As far as the third assertion, let $i\in\mathbb{N}^*$ be such that $T^n(i)=m$. Then,
$$T^{n+1}(2i) = T^n(T(2i))=T^n(i)=m.$$
Hence, the map $A^n_m \to A^{n+1}_m$ with $i\mapsto 2i$ defines a one-to-one correspondence from $A^n_m$ into $A^{n+1}_m$. Finally, for the fourth assertion, we have that $\abs{A^1_m} \le 2$ for any $m\in\mathbb{N}^*$. Using the second assertion and induction on $n\in\mathbb{N}^*$, we can easily prove that $A^n_m$ is finite for every $n,m$.
\end{proof}

For every $n\in\mathbb{N}$, we let $c_n$ be the maximal cardinality of the sets $(A^n_m)_{m=1}^\infty$. That is
$$c_n = \max_{m\in\mathbb{N}^*} \abs{A^n_m}.$$
We also consider the Fibonacci sequence $(F_n)_n$ with $F_0=1$ and $F_1=2$. Hence, for every $n\ge 2$ we have $F_n=F_{n-1}+F_{n-2}$. Then, the next proposition is proven.

\begin{proposition}\label{prop.spectral}
  For every $n\in\mathbb{N}$, it holds $n\le c_n \le F_n$.
\end{proposition}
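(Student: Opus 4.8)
The plan is to derive both inequalities by induction, using the preimage recursion of Proposition~\ref{prop.set_A^n_m}(ii) as the engine. Recall that recursion: for $m\not\equiv 2\pmod 3$ one has $A^n_m=A^{n-1}_{2m}$, whereas for $m\equiv 2\pmod 3$ one has the \emph{disjoint} union $A^n_m=A^{n-1}_{2m}\cup A^{n-1}_{(2m-1)/3}$ (disjointness being guaranteed by Proposition~\ref{prop.set_A^n_m}(i)). Thus a ``branching'' in the backward tree of preimages occurs precisely at those $m$ with $m\equiv 2\pmod 3$. The single arithmetic observation that drives the whole argument is that if $m\equiv 2\pmod 3$ then $2m\equiv 1\pmod 3$.

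For the upper bound $c_n\le F_n$ I would argue by strong induction on $n$. The base cases are $c_0=1=F_0$ (each $A^0_m$ is a singleton) and $c_1\le 2=F_1$ (since $\abs{A^1_m}\le 2$ for all $m$). For the inductive step with $n\ge 2$, fix $m$. If $m\not\equiv 2\pmod 3$, then $\abs{A^n_m}=\abs{A^{n-1}_{2m}}\le c_{n-1}\le F_{n-1}\le F_n$. If $m\equiv 2\pmod 3$, then by disjointness $\abs{A^n_m}=\abs{A^{n-1}_{2m}}+\abs{A^{n-1}_{(2m-1)/3}}$; here the crucial point is that $2m\equiv 1\pmod 3$, so $A^{n-1}_{2m}=A^{n-2}_{4m}$ does not branch at the next step, giving $\abs{A^{n-1}_{2m}}\le c_{n-2}$, while $\abs{A^{n-1}_{(2m-1)/3}}\le c_{n-1}$. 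Combining, $\abs{A^n_m}\le c_{n-1}+c_{n-2}\le F_{n-1}+F_{n-2}=F_n$. Taking the maximum over $m$ yields $c_n\le F_n$.

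For the lower bound $c_n\ge n$ I would exhibit a single $m$ whose backward preimage tree branches at every one of its first $n$ levels. Take $p=2^n-1$; by Lemma~\ref{lemma-number} we have $T^k(p)\equiv 2\pmod 3$ for all $k=1,\dots,n$. Set $m=T^n(p)$ and consider the backward path $w_j=T^{n-j}(p)$ for $j=0,1,\dots,n$, which satisfies $T(w_{j+1})=w_j$, so $w_{j+1}\in T^{-1}(w_j)$. Each of $w_0,\dots,w_{n-1}$ equals $T^k(p)$ for some $1\le k\le n$, hence is $\equiv 2\pmod 3$ and therefore a branching node. Since every node of the tree has one or two children, the number of nodes at level $j+1$ equals the number at level $j$ plus the number of branching nodes at level $j$; telescoping shows that the level-$n$ count $\abs{A^n_m}$ equals $1$ plus the total number of branching nodes among levels $0,\dots,n-1$. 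As we have produced at least one branching node at each such level, $\abs{A^n_m}\ge 1+n>n$, whence $c_n\ge n$ (indeed $c_n\ge n+1$ for $n\ge 1$).

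The substantive step is the upper bound, and specifically the observation that a ``doubling'' preimage $2m$ is automatically $\equiv 1\pmod 3$ and so cannot branch again at the following level. Without this two-step look-ahead one obtains only the trivial estimate $c_n\le 2c_{n-1}$, i.e.\ $c_n\le 2^n$; it is exactly the forced alternation (a branch always begets a non-branching child) that collapses the growth rate to the Fibonacci one. The points I would double-check are that the disjointness in Proposition~\ref{prop.set_A^n_m}(i) genuinely licenses replacing the union by a sum of cardinalities, that $(2m-1)/3$ is an integer when $m\equiv 2\pmod 3$, and the two small base cases $n=0,1$ by hand.
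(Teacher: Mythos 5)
Your proof is correct and follows essentially the same route as the paper's: the upper bound rests on the identical key observation that in the branching case $m\equiv 2\pmod 3$ the doubling preimage satisfies $2m\equiv 1\pmod 3$, hence $A^{n-1}_{2m}=A^{n-2}_{4m}$, which yields the Fibonacci recursion $\abs{A^n_m}\le c_{n-1}+c_{n-2}$, and the lower bound uses the same witnesses $p=2^n-1$ from Lemma~\ref{lemma-number}. Your write-up is in fact slightly cleaner in the non-branching case (the paper detours through the monotonicity $\abs{A^{n-1}_{2m}}\le\abs{A^n_{2m}}$) and supplies the level-by-level branching count that the paper leaves as a one-line remark, but these are refinements of the same argument, not a different one.
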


\begin{proof}
  For $n=0$ and $n=1$, it is clear that $c_0=F_0=1$ and $c_1=F_1=2$. We now proceed by induction on $n$. We assume that for any $0\le k\le k$, we have $k\le c_k \le F_k$. We show that $n\le c_n \le F_n$. We distinguish three cases.

  \begin{description}
    \item[Case I] Using induction on $n$, it is easy to see that $\abs{A^n_m}=1$ for every positive integer $m=0\mod 3$ and every $n\in\mathbb{N}$. Indeed, for every $m=0\mod 3$, we have $A^0_m=1$ and $\abs{A^n_m} = \abs{A^{n-1}_{2m}}$, where $2m=0\mod 3$. So, by the inductive hypothesis, it follows that $\abs{A^n_m}=1$.

    \item[Case II] If $m=1\mod 3$, then, by Proposition \ref{prop.set_A^n_m}, it follows that:
  $$\abs{A^n_m} = \abs{A^{n-1}_{2m}} \le \abs{A^n_{2m}}.$$
  We observe that $2m=2 \mod 3$, hence this case can be reduced to Case III below.

    \item[Case III] If $m=2\mod 3$ and $m=2k+1$, then by Proposition \ref{prop.set_A^n_m} we have that:
  $$ \abs{A^n_m} = \abs{A^{n-1}_{2m}}+\abs{A^{n-1}_{2k+1}} \le \abs{A^{n-1}_{2m}}+F_{n-1}.$$
  Since, $2m=1 \mod 3$, it follows that $A^{n-1}_{2m} = A^{n-2}_{4m}$, and hence $\abs{A^{n-1}_{2m}} = \abs{A^{n-2}{4m}} \le F_{n-2}$. Therefore, $\abs{A^n_m} \le F_{n-2}+ F_{n-1}=F_n$.
  \end{description}

  The three cases above imply that $\abs{A^n_m} \le F_n$ for every $m\in \mathbb{N}^*$. Thus, $c_n\le F_n$.

  Finally, using the numbers $p=(2^n-1)\mod 2^n$, given by Lemma \ref{lemma-number}, we can see that $\abs{A^n_{T^n(p)}}\ge n$, which completes the proof.
\end{proof}

We are now ready to obtain our estimation of the spectral radius of the forward Collatz-Koopman operators $L_{T,p}$.

\begin{theorem}\label{th.norm-Collatz^n}
  For any $n\in \mathbb{N}^*$, we have $ \|L^n_{T,p} \| =c_n^{1-\frac{1}{p}}$.
\end{theorem}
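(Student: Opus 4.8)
The plan is to generalize the computation of $\|L_{T,p}\|$ from Theorem \ref{th.norm-forward} using the sets $A^n_m$ introduced before Proposition \ref{prop.set_A^n_m}. First I would record the action of the iterate: since $L_{T,p}(e_m) = e_{T(m)}$, one has $L^n_{T,p}(e_m) = e_{T^n(m)}$, and therefore, for $x = (x_m) \in \ell_p$, grouping the basis vectors according to the value of $T^n$ gives
$$L^n_{T,p}(x) = \sum_{j=1}^\infty \Big( \sum_{m\in A^n_j} x_m \Big) e_j,$$
where $A^n_j = \{m\in\mathbb{N}^* : T^n(m) = j\}$. The key structural facts I would invoke are that the $A^n_j$ are pairwise disjoint (Proposition \ref{prop.set_A^n_m}(i)) and that every positive integer lies in exactly one of them, as $T^n$ is defined on all of $\mathbb{N}^*$; hence $\bigsqcup_j A^n_j = \mathbb{N}^*$.

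For the upper bound, fix $p \in [1,\infty)$. Applying the triangle inequality followed by the convexity estimate $\big(\sum_{i=1}^k a_i\big)^p \le k^{p-1}\sum_{i=1}^k a_i^p$ (valid for $a_i \ge 0$, $p \ge 1$, generalizing the inequality $(a+b)^p \le 2^{p-1}(a^p+b^p)$ used in Theorem \ref{th.norm-forward}) to each block of size $\abs{A^n_j} \le c_n$, I obtain
$$\|L^n_{T,p}(x)\|^p = \sum_j \Big| \sum_{m\in A^n_j} x_m \Big|^p \le \sum_j \abs{A^n_j}^{p-1} \sum_{m\in A^n_j} \abs{x_m}^p \le c_n^{p-1} \sum_j \sum_{m\in A^n_j} \abs{x_m}^p.$$
By disjointness and the covering property the double sum collapses to $\sum_m \abs{x_m}^p = \|x\|^p$, whence $\|L^n_{T,p}\| \le c_n^{1 - 1/p}$.

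For the reverse inequality I would exhibit an extremal vector. Since $c_n = \max_m \abs{A^n_m}$ is finite (by Proposition \ref{prop.spectral}, $c_n \le F_n$) and is a maximum over integer cardinalities, it is attained at some index $m_0$, i.e. $\abs{A^n_{m_0}} = c_n$. Taking $x = \sum_{m \in A^n_{m_0}} e_m$, the indicator of $A^n_{m_0}$ (which for $n=1$ recovers the choice $e_1+e_4$ of Theorem \ref{th.norm-forward}), I compute $\|x\| = c_n^{1/p}$ and, since $T^n(m) = m_0$ for every $m \in A^n_{m_0}$, $L^n_{T,p}(x) = c_n\, e_{m_0}$, so $\|L^n_{T,p}(x)\| = c_n$. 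This yields $\|L^n_{T,p}\| \ge c_n / c_n^{1/p} = c_n^{1-1/p}$, and combined with the upper bound gives equality. The case $p = \infty$ is handled by the same two estimates with the supremum norm, the indicator of $A^n_{m_0}$ again being extremal, giving $\|L^n_{T,\infty}\| = c_n$, consistent with the formula under the convention $1/\infty = 0$.

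The calculations are essentially routine; the only points requiring care are the collapse of the double sum, which rests precisely on the disjointness and exhaustiveness of the family $(A^n_j)_j$ established in Proposition \ref{prop.set_A^n_m}, and the finiteness of $c_n$ from Proposition \ref{prop.spectral}, which guarantees that the extremal block $A^n_{m_0}$ actually exists so that the lower-bound test vector is well defined.
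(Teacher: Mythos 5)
Your proposal is correct and follows essentially the same route as the paper: both identify $L^n_{T,p}$ with the Koopman operator of $T^n$, decompose the action via the disjoint fibers $A^n_j = \{m : T^n(m)=j\}$, obtain the upper bound from the convexity inequality $\bigl(\sum a_i\bigr)^p \le k^{p-1}\sum a_i^p$ applied blockwise, and obtain the lower bound from the indicator vector of a block of maximal cardinality $c_n$. The only (harmless) additions are your explicit treatment of the case $p=\infty$ and the remark that $c_n \le F_n$ guarantees the maximal block exists, neither of which changes the substance of the argument.
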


\begin{proof}
  First of all, we observe that for any $n\in\mathbb{N}^*$ we have $L^n_{T,p} = L_{T^n,p}$. Indeed, for any $x\in\ell_p$,
  $$ L^n_{T,p} = L_{T,p} \cdots L_{T,p}(x) = x\circ T\circ \cdots \circ T= L_{T^n,p}(x).$$
  Therefore, we have to calculate the norm of the operator $L_{T^n,p}$. The proof is now similar to the proof of Theorem \ref{th.norm-forward}. Indeed, it is easy to see that the operator is given by:
  $$ L_{T^n,p}(x)= L_{T^n,p}((x_i))= \sum_{m=1}^{\infty} \left(\sum_{i: T^n(i)=m} x_i\right) e_m = \sum_{m=1}^{\infty} \left(\sum_{i\in A^n_m} x_i\right) e_m.$$
  Therefore,
  \begin{align*}
    \|L_{T^n,p}(x)\|^p =  & \Big\|  \sum_{m=1}^{\infty} \Big(\sum_{i\in A^n_m}  x_i\Big) e_m \Big\| = \sum_{m=1}^{\infty} \Big|\sum_{i\in A^n_m} x_i \Big|^p \\
    \le  & \sum_{m=1}^{\infty} \abs{A^n_m}^{p-1} \Big(\sum_{i\in A^n_m} \abs{x_i}^p \Big) \\
    \le & c_n^{p-1} \sum_{m=1}^{\infty} \sum_{i\in A^n_m}\abs{x_i}^p = c_n^{p-1} \|x\|^p
  \end{align*}
where the last equality follows from the fact that $(A^n_m)_{m=1}^\infty$ are pairwise disjoint. Hence, $\|L_{T^n,p}\| \le c_n^{1-1/p}$. For the reverse inequality, we fix $m_0\in\mathbb{N}^*$ such that $\abs{A^n_{m_0}} = \max_{m\in\mathbb{N}^*} \abs{A^n_m} =c_n$ and we consider the vector $x=\sum_{i\in A^n_{m_0}} e_i$. Then, $\|x\| = \abs{A^n_{m_0}}^{1/p} = c_n^{1/p}$ and $L_{T^n,p}(x) = \abs{A^n_{m_0}} e_{m_0} = c_n e_{m_0}$. Thus,
$$ \|L_{T^n,p}\| \ge \frac{1}{\|x\|} \|L_{T^n,p}(x)\|=\frac{c_n}{c_n^{1/p}} = c_n^{1-1/p}$$
and the result follows.
\end{proof}

\begin{corollary}\label{cor.spectral_radius}
  For any $p\in[1,\infty]$, the spectral radius $\rho(L_{T,p})$ of the forward Collatz-Koopman operator $L_{T,p}$ satisfies the inequality
  $$1\le \rho(L_{T,p}) \le \varphi^{1-1/p},$$
  where $\varphi = \frac{1+\sqrt{5}}{2}$ stands for the golden ratio. In particular, for $p=1$ we obtain that $\rho(L_{T,1})=1$.
\end{corollary}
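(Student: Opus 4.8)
The plan is to combine Gelfand's spectral radius formula with the exact operator-norm computation of Theorem \ref{th.norm-Collatz^n} and the two-sided bound on $c_n$ from Proposition \ref{prop.spectral}. For any bounded operator one has $\rho(L_{T,p}) = \lim_{n\to\infty}\|L_{T,p}^n\|^{1/n}$, and this limit is guaranteed to exist. Since Theorem \ref{th.norm-Collatz^n} gives $\|L_{T,p}^n\| = c_n^{1-1/p}$, I would immediately rewrite
$$\rho(L_{T,p}) = \lim_{n\to\infty} c_n^{(1-1/p)/n}.$$
First I would dispose of the endpoint $p=1$, where $1-\tfrac1p = 0$: there $\|L_{T,1}^n\| = c_n^0 = 1$ for every $n$, so the formula returns $\rho(L_{T,1}) = 1$ at once, which also settles the final assertion of the statement.

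For $p>1$ I would factor $c_n^{(1-1/p)/n} = \bigl(c_n^{1/n}\bigr)^{1-1/p}$. Because the left-hand limit exists by Gelfand's formula and $x\mapsto x^{1/(1-1/p)}$ is continuous on $(0,\infty)$, the quantity $L:=\lim_{n\to\infty} c_n^{1/n}$ exists as well, and $\rho(L_{T,p}) = L^{\,1-1/p}$. It then remains only to locate $L$ inside $[1,\varphi]$, and here I would invoke the squeeze $n\le c_n\le F_n$ of Proposition \ref{prop.spectral}. Taking $n$-th roots yields
$$n^{1/n} \le c_n^{1/n} \le F_n^{1/n},$$
so that $\liminf$ and $\limsup$ of $c_n^{1/n}$ are trapped between $\lim_n n^{1/n}=1$ and $\lim_n F_n^{1/n}=\varphi$. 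Combining $1\le L\le\varphi$ with $1-\tfrac1p\ge 0$ gives $1\le \rho(L_{T,p})\le \varphi^{1-1/p}$, and the same squeeze covers the case $p=\infty$ where the exponent equals $1$.

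The one genuinely analytic ingredient, and the step I expect to require the most care, is the Fibonacci-type asymptotic $F_n^{1/n}\to\varphi$. This is standard: the sequence $(F_n)$ obeys a linear recurrence with characteristic equation $x^2=x+1$ whose dominant root is the golden ratio $\varphi=\frac{1+\sqrt5}{2}$, hence $F_n$ grows like $C\varphi^n$ and $F_n^{1/n}\to\varphi$. I would prove the limit directly from the recurrence (e.g. via the ratio $F_n/F_{n-1}\to\varphi$) rather than citing Binet's formula, to keep the argument self-contained.

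A secondary subtlety worth flagging is the existence of $\lim_n c_n^{1/n}$ itself. Although the argument above extracts it for free from Gelfand's formula, one can obtain it independently by observing that $(c_n)$ is submultiplicative, $c_{n+k}\le c_n c_k$: any $i$ with $T^{n+k}(i)=m$ factors through some $\ell=T^{k}(i)$ with $T^{n}(\ell)=m$, so $A^{n+k}_m$ decomposes as a union over $\ell\in A^n_m$ of the sets $A^{k}_\ell$, giving $\abs{A^{n+k}_m}\le \sum_{\ell\in A^n_m}\abs{A^{k}_\ell}\le c_n c_k$. Fekete's lemma then guarantees that $\lim_n c_n^{1/n}$ exists and equals $\inf_n c_n^{1/n}$, which makes the interchange of limit and power fully rigorous without appealing to the operator-theoretic limit.
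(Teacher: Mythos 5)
Your proof is correct and follows essentially the same route as the paper's: Gelfand's formula combined with the norm identity $\|L_{T,p}^n\| = c_n^{1-1/p}$ of Theorem \ref{th.norm-Collatz^n} and the squeeze $n \le c_n \le F_n$ of Proposition \ref{prop.spectral}, together with $\lim_n F_n^{1/n} = \varphi$. Your extra touches --- the explicit $p=1$ endpoint and the submultiplicativity/Fekete argument for the existence of $\lim_n c_n^{1/n}$ --- are correct but not needed, since sandwiching $\liminf$ and $\limsup$ of $\|L_{T,p}^n\|^{1/n}$ already gives the stated bounds, which is exactly what the paper does.
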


\begin{proof}
  The spectral radius of the operator $L_{T,p}$ is given by $\rho(L_{T,p}) = \lim_n \|L_{T,p}^n\|^{1/n}$. By Theorem \ref{th.norm-Collatz^n} and Proposition \ref{prop.spectral}, it follows immediately that
  $$ \left(n^{1/n}\right)^{1-1/p} \le \|L_{T,p}^n\|^{1/n} \le \left(F_n^{1/n}\right)^{1-1/p}.$$
  It is well-known that $\lim_n F_n^{1/n}=\varphi$ (the golden ratio) and the desired result follows.
\end{proof}

\begin{remark}
  The numbers of Lemma \ref{lemma-number} have the additional property that $ 2^{2i}T^k(p) = 1\mod 3$ for every $k=1,2,\ldots , n$ and $i=1,2,\ldots, [\frac{k}{2}]$. This remark allows us to obtain a slightly better lower estimate for $c_n$ in Proposition \ref{prop.spectral}. Namely, it is proved that $c_n \ge c_{n-1}+\left[\frac{n+1}{2} \right]$. Hence, $c_n\ge \frac{1}{4} (n+1)(n+2)$. However, this lower estimation is also polynomial, and thus it does not affect the result in Corollary \ref{cor.spectral_radius}.
\end{remark}

\begin{corollary}
  For any $p\in [1,\infty]$, the spectral radius of the backward Collatz-Koopman operator satisfies
  $$1 \le \rho(B_{T,p}) \le \varphi^{1/p}.$$
  In particular, for $p=\infty$ we obtain that $\rho(B_{T,\infty})=1$.
\end{corollary}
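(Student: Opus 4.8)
The plan is to reduce the statement to the bound for the forward operators already established in Corollary \ref{cor.spectral_radius}, exploiting the duality $B_{T,p}=L^*_{T,q}$ proved above. Recall that for $r\in[1,\infty)$ one has $L^*_{T,r}=B_{T,s}$, where $s$ is the conjugate index of $r$. Hence, to treat $B_{T,p}$ for a given $p\in(1,\infty]$, I would choose $r$ to be the conjugate index of $p$, so that $r\in[1,\infty)$ and $B_{T,p}=L^*_{T,r}$. The Banach-space adjoint of a bounded operator has the same spectrum as the operator itself, so $\sigma(B_{T,p})=\sigma(L^*_{T,r})=\sigma(L_{T,r})$ and therefore $\rho(B_{T,p})=\rho(L_{T,r})$. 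Corollary \ref{cor.spectral_radius} gives $1\le\rho(L_{T,r})\le\varphi^{1-1/r}$, and since $1-1/r=1/p$ this is exactly $1\le\rho(B_{T,p})\le\varphi^{1/p}$. In particular, taking $p=\infty$ forces $r=1$, whence $\rho(B_{T,\infty})=\rho(L_{T,1})=1$.

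The one value not covered by this argument is $p=1$, and this is the main obstacle: as already observed, $B_{T,1}$ is \emph{not} the adjoint of $L_{T,\infty}$, so the duality route breaks down at this endpoint. I would instead compute the powers of $B_{T,1}$ directly, in the spirit of Theorem \ref{th.norm-Collatz^n}. A short induction shows that $B^n_{T,p}(e_m)=\sum_{i\in A^n_m}e_i$ for every $m$, and since the sets $(A^n_m)_{m}$ partition $\mathbb{N}^*$ (each $i$ lies in the unique $A^n_m$ with $m=T^n(i)$), the $i$-th coordinate of $B^n_{T,1}(x)$ is simply $x_{T^n(i)}$. Summing over $i$ and regrouping by the value $m=T^n(i)$ yields $\|B^n_{T,1}(x)\|_1=\sum_m\abs{A^n_m}\,\abs{x_m}\le c_n\|x\|_1$, with equality attained at $x=e_{m_0}$ for an index $m_0$ realising $\abs{A^n_{m_0}}=c_n$. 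Thus $\|B^n_{T,1}\|=c_n$, and Gelfand's formula together with $n\le c_n\le F_n$ (Proposition \ref{prop.spectral}) and $\lim_n F_n^{1/n}=\varphi$ give $1\le\rho(B_{T,1})=\lim_n c_n^{1/n}\le\varphi=\varphi^{1/1}$.

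In fact the computation in the previous paragraph works verbatim for every $p\in[1,\infty]$ and yields the clean identity $\|B^n_{T,p}\|=c_n^{1/p}$ (with the convention $c_n^{1/\infty}=1$), the case $p=\infty$ following from $\|B^n_{T,\infty}(x)\|_\infty=\sup_i\abs{x_{T^n(i)}}=\|x\|_\infty$. So an alternative, and perhaps more transparent, route is to bypass duality altogether: establish $\|B^n_{T,p}\|=c_n^{1/p}$ uniformly, deduce $\rho(B_{T,p})=\big(\lim_n c_n^{1/n}\big)^{1/p}$ from Gelfand's formula, and then read off the bounds from $n\le c_n\le F_n$ and $\lim_n F_n^{1/n}=\varphi$. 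This unified argument has the advantage of handling the delicate $p=1$ endpoint on exactly the same footing as the rest, which is precisely the point at which the adjoint argument requires separate care.
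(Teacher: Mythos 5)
Your proof is correct and follows essentially the same route as the paper's: for $p\in(1,\infty]$ the paper likewise passes to the adjoint, noting $\rho(B_{T,p})=\rho(L_{T,q})$ with $q$ the conjugate index and invoking Corollary \ref{cor.spectral_radius}, and for the endpoint $p=1$ it performs exactly your direct computation $\|B^n_{T,1}\|=c_n$ (via $B^n_{T,1}(x)=\sum_{m}\sum_{i\in A^n_m}x_m e_i$ and the disjointness of the sets $A^n_m$), concluding with Gelfand's formula and the bounds $n\le c_n\le F_n$. Your closing observation that the same computation yields $\|B^n_{T,p}\|=c_n^{1/p}$ uniformly for all $p\in[1,\infty]$ is also correct (and consistent with Theorem \ref{th.norm-Collatz^n} by duality); the paper does not take this unified path, but it is a slightly cleaner alternative since it treats the delicate $p=1$ endpoint on the same footing as every other exponent.
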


\begin{proof}
  For $p\in (1,\infty]$, since $B_{T,p}$ is the dual operator of $L_{T,q}$, where $q$ is the conjugate index of $p$, we have that $\rho(B_{T,p}) = \rho(L_{T,q}) $ and the result follows immediately. For $p=1$, it suffices to show that $\|B^n_{T,1}\| =c_n$ for any $n\in\mathbb{N}^*$.

  Indeed, for any $n\in\mathbb{N}^*$ and any $x=(x_m)\in \ell_1$, we have
  $$B^n_{T,1}(x) =\sum_{m=1}^{\infty} \sum_{i: \, T^n(i)=m} x_m e_i = \sum_{m=1}^{\infty} \sum_{i\in A^n_m} x_m e_i.$$
  Hence,
  \begin{align*}
    \|B^n_{T,1}(x)\| = & \Big\|\sum_{m=1}^{\infty} \sum_{i\in A^n_m} x_m e_i \Big\| = \sum_{m=1}^{\infty} \sum_{i\in A^n_m} \abs{x_m} \\
    \le & \sum_{m=1}^{\infty} c_n \abs{x_m} = c_n \|x\|.
  \end{align*}
  Thus $\| B^n_{T,1}\| \le c_n$. If $m_0 \in \mathbb{N}^*$ is such that $\abs{A^n_{m_0}} = \max_{m\in\mathbb{N}*} \abs{A^n_m}=c_n$, then we take the vector $x=e_{m_0}$ and we have that $B^n_{T,p} (x) = \sum_{i\in A^n_{m_0}} e_i$. Therefore,
  $$\|  B^n_{T,1}\| \ge \frac{1}{\|x\|} \| B^n_{T,1}(x)\| = c_n.$$
\end{proof}

\section{The Collatz-Koopman operator on $\ell_1$.}\label{sec.ell-1-operator}
In this section, we focus our attention on the Collatz-Koopman operator defined on $\ell_1$, i.e. $L_{T,1} \colon \ell_1 \to \ell_1$, given by
$$L_{T,1}(x) = x\circ T \quad \forall x\in \ell_1, \text{ or}$$
\begin{align*}
  L_{T,1}(x) = & \sum_{n=1}^{\infty} x_n e_{T(n)} = \sum_{m=1}^{\infty} \Big(\sum_{n\in A^1_m} x_n \Big) e_m \\
  = & \sum_{k=1}^{\infty} x_{6k} e_{3k} + \sum_{k=0}^{\infty} x_{6k+2} e_{3k+1} +\sum_{k=0}^{\infty} (x_{6k+4}+x_{2k+1}) e_{3k+2}.
\end{align*}
By Sections \ref{sec-Koopman_operators} and \ref{sec. spectral radius}, we know that $L_{T,1}$ is a bounded operator of norm one and its spectral radius is also $\rho(L_{T,1}) =1$.

The Collatz conjecture refers to the convergence of the sequence $(T^k(n))_{k=1}^\infty$ for every $n\in \mathbb{N}^*$. However, it is well known that for an arbitrary sequence $(a_n)_{n}$, the sequence of averages $\left( \frac{a_1+a_2+\ldots+a_n}{n} \right)$ has better convergence properties. Motivated by this remark, we define the average operator as follows. For any $x\in \ell_1$ we set
$$\mathcal{K}(x) = \lim_{k\to \infty} \frac{x+L_{T,1}(x) + \ldots +L_{T,1}^{k-1}(x)}{k}.$$
Unavoidably, we have to address the issue whether the operator $\mathcal{K}$ is well defined. The next lemma contains the fundamental information concerning the operator $L$. In order to state the lemma, we need a piece of notation. Define a cycle to be a (finite) set of positive integers
$$\mathcal{C} = \{ n_1 ,n_2 ,\ldots ,n_k\}$$
such that $T(n_i)=n_{i+1}$ for any $i=1,2,\ldots, k-1$, and $T(n_k)=n_1$. The \emph{length of the cycle} $\mathcal{C}$, denoted by $l(\mathcal{C})$, is the number $k$, that is the cardinality of the set $\mathcal{C}$. It is also clear that two cycles are disjoint sets. For instance, the set $\mathcal{C}_1=\{1,2\}$ is a cycle of length $2$ and we call it the \emph{trivial cycle}.

If the Collatz conjecture is true, then there is only one cycle, namely the trivial one. However, to the best of our knowledge, it is an open question whether there are finitely many cycles or not. Nevertheless, it is trivial that there are at most countably many cycles, which we can enumerate as $(\mathcal{C}_i)_{i=1}^\infty$ such that $\min\mathcal{C}_i <\min\mathcal{C}_j$.

For every cycle $\mathcal{C}_i$ we let $N_i$ be the subset of $\mathbb{N}^*$ containing all natural numbers $n$ whose trajectory $(T^k(n))_{k=0}^\infty$ ends in the cycle $\mathcal{C}_i$. Formally, we have
$$N_i = \{ n\in \mathbb{N}^* \mid (\exists k\in\mathbb{N}^*)[T^k(n) \in \mathcal{C}_i] \}.$$
We also set $N_\infty =\{ n\in \mathbb{N}^* \mid T^k(n) \text{ diverges to infinity}\}$. Then, $\{N_\infty, N_i \mid i\in \mathbb{N}^*\}$ defines a partition of $\mathbb{N}^*$ into pairwise disjoint sets.

Suppose now that there is $n\in\mathbb{N}^*$ such that the corresponding trajectory $\{n, T(n), T^2(n), \ldots \}$ of the Collatz map diverges to infinity. Then, for any $k\in\mathbb{N}$, we obtain
$$\frac{e_n+L_{T,1}(e_n) + \ldots +L_{T,1}^{k-1}(e_n)}{k} = \sum_{i=0}^{k-1} \frac{1}{k} e_{T^i(n)}.$$
It follows easily that $\left\| \frac{e_n+L_{T,1}(e_n) + \ldots +L_{T,1}^{k-1}(e_n)}{k} \right\|_{\ell_1} =1$ for any $k\in\mathbb{N}^*$, however the sequence of averages $\Big( \frac{e_n+L_{T,1}(e_n) + \ldots +L_{T,1}^{k-1}(e_n)}{k} \Big)_{k=1}^\infty$ converges pointwise to $0$. This example shows that the average operator ignores the divergent trajectories. However, this inconvenience occurs only in the case of divergent trajectories.

\begin{lemma}\label{lemma-average-operator}
  Assume that $N_\infty=\emptyset$, i.e. no divergent trajectories exist. Then for every $x=(x_n)_{n=1}^\infty$ the sequence $\left( \frac{x+L_{T,1}(x) + \ldots +L_{T,1}^{k-1}(x)}{k} \right)$ converges with respect to the $\ell_1$-norm topology and its limit is given by
$$ \lim_{k\to \infty} \frac{x+L_{T,1}(x) + \ldots +L_{T,1}^{k-1}(x)}{k} = \sum_{i=1}^{\infty} \left( \frac{\sum_{n\in  N_i}x_n}{l(\mathcal{C}_i)} \right) \cdot \sum_{j\in \mathcal{C}_i} e_j.$$
\end{lemma}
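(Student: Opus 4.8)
The plan is to evaluate the Cesàro averages first on the standard basis vectors and then extend to all of $\ell_1$ by continuity, exploiting the uniform bound $\|L_{T,1}\|=1$ coming from Theorem \ref{th.norm-forward}. Throughout, write $M_k=\frac1k\sum_{j=0}^{k-1}L_{T,1}^j$, so that the statement is the assertion that $M_k(x)$ converges in $\ell_1$ to the indicated limit, which I will call $\Phi(x)$.

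Since $L_{T,1}(e_n)=e_{T(n)}$, one has $L_{T,1}^j(e_n)=e_{T^j(n)}$, whence $M_k(e_n)=\frac1k\sum_{j=0}^{k-1}e_{T^j(n)}$. Fix $n$; because $N_\infty=\emptyset$ there is a unique $i$ with $n\in N_i$, and the orbit of $n$ has the familiar ``$\rho$''-shape: there is a least $m$ with $T^m(n)\in\mathcal{C}_i$, the transient iterates $T^0(n),\dots,T^{m-1}(n)$ lie outside $\mathcal{C}_i$ and are pairwise distinct (any repetition among them would make some $T^a(n)$ periodic, hence a member of a cycle, forcing an earlier entry into $\mathcal{C}_i$ and contradicting minimality of $m$, since the $N_i$ partition $\mathbb{N}^*$), and for $j\ge m$ the iterates run periodically through $\mathcal{C}_i$ with period $l:=l(\mathcal{C}_i)$. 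Splitting the sum into the transient block $j<m$ and the periodic block $m\le j<k$, the transient block carries total $\ell_1$-mass $m/k\to0$, while in the periodic block (writing $k-m=ql+r$ with $0\le r<l$) each element of $\mathcal{C}_i$ is visited $q$ or $q+1$ times, so its coefficient equals $q/k$ or $(q+1)/k$, both tending to $1/l$. As $\mathcal{C}_i$ is finite, summing the finitely many coefficient errors gives
$$\lim_{k\to\infty} M_k(e_n)=\frac{1}{l(\mathcal{C}_i)}\sum_{p\in\mathcal{C}_i}e_p=:v_i\qquad\text{(convergence in }\ell_1\text{).}$$
By linearity, for every finitely supported $x=\sum_{n\in F}x_ne_n$ this yields
$$\lim_{k\to\infty} M_k(x)=\sum_{i}\frac{\sum_{n\in F\cap N_i}x_n}{l(\mathcal{C}_i)}\sum_{p\in\mathcal{C}_i}e_p=\Phi(x),$$
which is precisely the claimed formula restricted to finitely supported vectors.

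To reach a general $x\in\ell_1$ I would invoke two uniform bounds. First, $\|M_k\|\le\frac1k\sum_{j=0}^{k-1}\|L_{T,1}\|^j\le1$. Second, since distinct cycles have disjoint supports and $\sum_{n\in N_i}x_n$ converges absolutely for $x\in\ell_1$, the map $\Phi$ is a bounded operator on $\ell_1$ with $\|\Phi(x)\|_1=\sum_i\abs{\sum_{n\in N_i}x_n}\le\|x\|_1$. Given $\varepsilon>0$, choose a finite $F$ with $\|x-x_F\|_1<\varepsilon/3$ where $x_F=\sum_{n\in F}x_ne_n$; then
$$\|M_k(x)-\Phi(x)\|\le\|M_k(x-x_F)\|+\|M_k(x_F)-\Phi(x_F)\|+\|\Phi(x_F-x)\|,$$
where the outer terms are each $<\varepsilon/3$ by the norm bounds and the middle term is $<\varepsilon/3$ for large $k$ by the finitely supported case. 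Hence $M_k(x)\to\Phi(x)$, establishing the formula.

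The genuinely dynamical content lies entirely in the first step—the equidistribution of an orbit over its terminal cycle and the negligibility of the transient tail—while the passage to all of $\ell_1$ is routine soft analysis once $\|M_k\|\le1$ is available. The one point I expect to require a little care is the verification of the ``$\rho$''-shape, namely that the transient iterates are distinct from one another and from the cycle; I would settle this, as indicated above, by combining the minimality of $m$ with the fact that the sets $N_i$ form a partition of $\mathbb{N}^*$.
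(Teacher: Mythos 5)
Your proposal is correct and follows essentially the same route as the paper's own proof: establish the limit on basis vectors by splitting each orbit into a vanishing transient part and an equidistributed periodic part, extend by linearity to finitely supported vectors, and pass to all of $\ell_1$ via the uniform bounds $\|M_k\|\le 1$ and $\|\Phi\|\le 1$ in a three-term $\varepsilon$-argument. The only (harmless) difference is that you verify distinctness of the transient iterates, which neither proof actually needs since the transient block's total mass is at most $m/k$ regardless.
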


\begin{proof}
  Since $x$ belongs to $\ell_1$, it is easy to see that the series $\sum_{i=1}^{\infty} \left( \frac{\sum_{n\in  N_i}x_n}{l(\mathcal{C}_i)} \right) \cdot \sum_{j\in \mathcal{C}_i} e_j$ converges in $\ell_1$ and furthermore,
$$\| \sum_{i=1}^{\infty} \left( \frac{\sum_{n\in  N_i}x_n}{l(\mathcal{C}_i)} \right) \cdot \sum_{j\in \mathcal{C}_i} e_j\|_{\ell_1} \le \|x\|_{\ell_1}.$$

We prove the desired result progressively starting with the vectors $e_n$'s, then passing to finite linear combinations of them and finally for an arbitrary $x\in \ell_1$. So, assume first that $x=e_n$. Then $n$ belongs to $N_i$ for some $i\in \mathbb{N}^*$, which means that the trajectory of $n$ ends in the cycle $\mathcal{C}_i$. Let $\mathcal{C}_i = \{ n_1, n_2, \ldots , n_l\}$. We have to prove that
$$\lim_{k\to \infty} \frac{e_n+L_{T,1}(e_n) + \ldots +L_{T,1}^{k-1}(e_n)}{k} =  \frac{1}{l}  \cdot \sum_{j=1}^l e_{n_j}.$$
Indeed, let $k_0$ be the least natural number such that $T^{k_0}(n)=n_1$. Then for every $k>k_0$, we write $k-1-k_0=rl+\upsilon$, where $0\le \upsilon <l$, and we have
\begin{align*}
\frac{e_n+L_{T,1}(e_n) + \ldots +L_{T,1}^{k-1}(e_n)}{k}=&\frac{e_n+e_{T(n)} + \ldots +e_{T^{k-1}(n)}}{k}\\
=& \frac{\sum_{i=1}^{k_0} e_{T^i(n)}}{k} + \frac{(r+1) \sum_{j=1}^{\upsilon} e_{n_j}+r \sum_{j=\upsilon+1}^{l}e_{n_j}}{k_0+1+rl+\upsilon}.
\end{align*}
Letting $k$ tend to infinity (i.e. $r\to \infty$) we have the desired result.

Assume now that $x=(x_n)$ is a finitely supported sequence in $\ell_1$. Then $x$ is a finite linear combination of the basis vectors $e_n$'s, that is there is $N\in \mathbb{N}^*$ such that $x=\sum_{m=1}^{N} x_m e_m$. Therefore, we obtain
\begin{align*}
  \mathcal{K}(x) =  & \sum_{m=1}^{N} x_m \mathcal{K}(e_m) =  \sum_{i=1}^{\infty}\sum_{m\le N, m\in N_i} x_m \mathcal{K}(e_m) \\
  = & \sum_{i=1}^{\infty}\sum_{m\le N, m\in N_i} x_m \frac{1}{l_i}  \cdot \sum_{j\in \mathcal{C}_i} e_j=  \sum_{i=1}^{\infty} \frac{\sum_{m\le N, m\in N_i}x_m}{l_i}  \cdot \sum_{j\in \mathcal{C}_i} e_j,
\end{align*}
where every sum is a finite one and the desired result is clear in this case.

Finally, assume that $x=(x_n)$ is any vector of $\ell_1$. Then, for any $\epsilon>0$, we can consider $N\in \mathbb{N}$ large enough so that $\sum_{n=N}^{\infty} \abs{x_n} <\epsilon$ and set $y=(y_n)_{n=0}^\infty=(x_1, \ldots, x_N, 0,0, \ldots)$. Therefore, $\|x-y\| <\epsilon$. Furthermore, for the finitely supported vector $y$, we have verified the desired result. Hence, for all $k$ sufficiently large, we have:
$$\Big\| \frac{y+L_{T,1}(y) + \ldots +L_{T,1}^{k-1}(y)}{k} - \sum_{i=1}^{\infty} \Big( \frac{\sum_{n\in  N_i}y_n}{l(\mathcal{C}_i)} \Big) \cdot \sum_{j\in \mathcal{C}_i} e_j \Big\| \le \epsilon.$$
Therefore, for all sufficiently large $k$, we obtain
\begin{align*}
  \Big\|  \frac{x+L_{T,1}(x) + \ldots +L_{T,1}^{k-1}(x)}{k} & - \sum_{i=1}^{\infty} \Big( \frac{\sum_{n\in  N_i}x_n}{l(\mathcal{C}_i)} \Big) \cdot \sum_{j\in \mathcal{C}_i} e_j \Big\|  \le \\
 &\le   \Big\|\frac{x+L_{T,1}(x) + \ldots +L_{T,1}^{k-1}(x)}{k} - \frac{y+L_{T,1}(y) + \ldots +L_{T,1}^{k-1}(y)}{k} \Big\| \\
& +  \Big\| \frac{y+L_{T,1}(y) + \ldots +L_{T,1}^{k-1}(y)}{k} - \sum_{i=1}^{\infty} \Big( \frac{\sum_{n\in  N_i}y_n}{l(\mathcal{C}_i)} \Big) \cdot \sum_{j\in \mathcal{C}_i} e_j \Big\| \\
&+  \Big\| \sum_{i=1}^{\infty} \Big( \frac{\sum_{n\in  N_i}y_n}{l(\mathcal{C}_i)} \Big) \cdot \sum_{j\in \mathcal{C}_i} e_j - \sum_{i=1}^{\infty} \Big( \frac{\sum_{n\in  N_i}x_n}{l(\mathcal{C}_i)} \Big) \cdot \sum_{j\in \mathcal{C}_i} e_j \Big\|\\
& \le 2\epsilon +\Big\| \sum_{i=1}^{\infty} \frac{1}{l(\mathcal{C}_i)} \Big(\sum_{n\in  N_i, n>N}-x_n\Big) \cdot \sum_{j\in \mathcal{C}_i} e_j  \Big\|\\
& \le 2\epsilon + \sum_{i=1}^{\infty} \Big|\sum_{n\in  N_i, n>N}-x_n  \Big|\\
& \le 2\epsilon + \sum_{n>N}\abs{x_n} \le 3\epsilon.
\end{align*}

\end{proof}

The next result is now easy to establish.

\begin{theorem}\label{th-average-operator}
  Assume that the Collatz dynamical system has no divergent trajectories. Then the average operator $\mathcal{K}\colon \ell_1 \to \ell_1$ is well-defined and bounded with norm one, i.e. $\mathcal{K}$ belongs to the unit sphere of $\mathcal{L}(\ell_1)$.
\end{theorem}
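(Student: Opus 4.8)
The plan is to harvest everything from Lemma~\ref{lemma-average-operator}, which already carries the analytic weight; the theorem is essentially a repackaging of that lemma together with one explicit test vector. First I would observe that, under the standing hypothesis $N_\infty=\emptyset$, Lemma~\ref{lemma-average-operator} guarantees that for every $x\in\ell_1$ the averages $\frac{x+L_{T,1}(x)+\ldots+L_{T,1}^{k-1}(x)}{k}$ converge in the $\ell_1$-norm, with limit
$$\mathcal{K}(x) = \sum_{i=1}^{\infty} \left( \frac{\sum_{n\in N_i}x_n}{l(\mathcal{C}_i)} \right) \cdot \sum_{j\in \mathcal{C}_i} e_j.$$
This is precisely the assertion that $\mathcal{K}$ is well defined as a map $\ell_1\to\ell_1$. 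Linearity is then immediate: each partial average is a linear function of $x$ since $L_{T,1}$ is linear, and a norm-limit of linear maps is linear; equivalently, the closed form above is visibly linear in the coordinates $(x_n)$, because $x\mapsto\sum_{n\in N_i}x_n$ is linear for each $i$.

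Next, for boundedness, I would invoke the estimate already recorded inside the proof of Lemma~\ref{lemma-average-operator}, namely
$$\|\mathcal{K}(x)\|_{\ell_1} = \Big\| \sum_{i=1}^{\infty} \Big( \frac{\sum_{n\in N_i}x_n}{l(\mathcal{C}_i)} \Big) \sum_{j\in \mathcal{C}_i} e_j \Big\|_{\ell_1} \le \|x\|_{\ell_1}.$$
This follows at once from the triangle inequality: the $\ell_1$-mass deposited on the cycle $\mathcal{C}_i$ equals $\frac{1}{l(\mathcal{C}_i)}\,\abs{\sum_{n\in N_i}x_n}\cdot l(\mathcal{C}_i) = \abs{\sum_{n\in N_i}x_n} \le \sum_{n\in N_i}\abs{x_n}$, and the sets $(N_i)_i$ partition $\mathbb{N}^*$ (by the hypothesis $N_\infty=\emptyset$), so summing over $i$ recovers $\|x\|_{\ell_1}$. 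This yields $\|\mathcal{K}\|\le 1$.

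Finally, for the reverse inequality I would test $\mathcal{K}$ on a single basis vector lying in the trivial cycle. Since $1\in\mathcal{C}_1=\{1,2\}$, we have $1\in N_1$ and $l(\mathcal{C}_1)=2$, whence $\mathcal{K}(e_1)=\frac{1}{2}(e_1+e_2)$ and $\|\mathcal{K}(e_1)\|_{\ell_1}=1=\|e_1\|_{\ell_1}$. Therefore $\|\mathcal{K}\|\ge 1$, and combining the two bounds gives $\|\mathcal{K}\|=1$, so $\mathcal{K}$ lies on the unit sphere of $\mathcal{L}(\ell_1)$. I do not expect a genuine obstacle here: the only delicate point, namely interchanging the limit with the infinite sum over cycles, has already been dispatched in Lemma~\ref{lemma-average-operator} via a finite-support approximation, so the present argument amounts to assembling the established pieces.
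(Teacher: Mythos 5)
Your proposal is correct and follows essentially the same route as the paper: well-definedness comes from Lemma~\ref{lemma-average-operator}, the bound $\|\mathcal{K}\|\le 1$ is a one-line estimate (the paper deduces it from $\|L_{T,1}\|=1$ applied to the Ces\`aro averages, while you read it off the explicit limit formula, a bound already recorded in the lemma's proof), and the lower bound uses the identical test vector $e_1$ with $\mathcal{K}(e_1)=\tfrac{1}{2}(e_1+e_2)$. The two derivations of the upper bound are interchangeable trivialities, so there is nothing substantive to distinguish your argument from the paper's.
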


\begin{proof}
  The previous lemma implies that $\mathcal{K}$ is well-defined. Since $\|L_{T,1}\|=1$, it is also easy to verify that $\|\mathcal{K}\|\le1$. Finally, by the previous lemma we obtain that $\mathcal{K}(e_1)=\frac{1}{2} e_1 +\frac{1}{2} e_2$. Hence,
  $$\|\mathcal{K} \| \ge \frac{1}{\|e_1\|} \|\mathcal{K}(e_1) \| = 1,$$
  and the result follows.
\end{proof}

\begin{remark}
  The above theorem can be described as an analogue of the von Neumann's mean ergodic theorem (see for example \cite{walters}).
\end{remark}

The next corollary is straightforward.

\begin{corollary}\label{cor-average-operator}
The following are equivalent.
\begin{enumerate}
  \item The Collatz conjecture is true.
  \item For any $x=(x_i)_{i\in\mathbb{N}^*}\in \ell_1$, we have:
$$\mathcal{K}(x) = \frac{\sum_{i=1}^{\infty}x_i}{2} e_1 + \frac{\sum_{i=1}^{\infty}x_i}{2} e_2.$$
 \item $\mathcal{K}$ is a rank-2 operator.
\end{enumerate}
\end{corollary}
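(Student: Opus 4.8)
The plan is to prove the corollary as a cycle of implications $(1)\Rightarrow(2)\Rightarrow(3)\Rightarrow(1)$, with Lemma~\ref{lemma-average-operator} doing essentially all of the work. Throughout I keep the standing hypothesis of Theorem~\ref{th-average-operator} that there are no divergent trajectories, so that $\mathcal{K}$ is a well-defined bounded operator on $\ell_1$; and, in accordance with the formula displayed in $(2)$, I read condition $(3)$ as the statement that the range of $\mathcal{K}$ is carried by the first two coordinates, that is $\mathcal{K}(x)\in\mathrm{span}\{e_1,e_2\}$ for every $x\in\ell_1$.

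For $(1)\Rightarrow(2)$ I would argue that if the Collatz conjecture holds then the trivial cycle $\mathcal{C}_1=\{1,2\}$ is the unique cycle and every trajectory terminates in it; hence $N_1=\mathbb{N}^*$ and $N_i=\emptyset$ for $i\ge 2$. Substituting $l(\mathcal{C}_1)=2$ and $\sum_{j\in\mathcal{C}_1}e_j=e_1+e_2$ into the formula of Lemma~\ref{lemma-average-operator} collapses the infinite sum to its first term and reproduces precisely the expression in $(2)$. The implication $(2)\Rightarrow(3)$ is then immediate: the formula in $(2)$ exhibits $\mathcal{K}(x)$ as an element of $\mathrm{span}\{e_1,e_2\}$ for every $x$, so $\mathcal{K}$ is the finite-rank operator whose image is carried by the coordinates $1$ and $2$, which is exactly condition $(3)$.

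The substantive step is $(3)\Rightarrow(1)$. Here I would return to the cycle decomposition of Lemma~\ref{lemma-average-operator}, writing $\mathcal{K}(x)=\sum_{i}c_i(x)\,v_i$ with $v_i=\sum_{j\in\mathcal{C}_i}e_j$ and $c_i(x)=\bigl(\sum_{n\in N_i}x_n\bigr)/l(\mathcal{C}_i)$. Because distinct cycles are disjoint sets, the vectors $v_i$ have pairwise disjoint supports and are therefore linearly independent; moreover, taking $x=e_n$ for some $n\in N_i$ gives $c_i(x)\ne 0$ while annihilating all the other coefficients, so each $v_i$ genuinely belongs to the range of $\mathcal{K}$. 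Condition $(3)$ forces this range into $\mathrm{span}\{e_1,e_2\}$. Since the trivial cycle $\{1,2\}$ always exists and cycles are mutually disjoint, any hypothetical further cycle $\mathcal{C}_i$ would be disjoint from $\{1,2\}$, so its indicator $v_i$ would have support outside $\{1,2\}$ and could not lie in $\mathrm{span}\{e_1,e_2\}$ --- contradicting that $v_i$ is in the range. Thus $\{1,2\}$ is the only cycle, and as no divergent trajectories exist, every $n\in\mathbb{N}^*$ satisfies $T^k(n)\in\{1,2\}$ for some $k$, i.e.\ the Collatz conjecture holds.

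The only place where any care is needed is this last implication: the point is to extract the range of $\mathcal{K}$ directly from Lemma~\ref{lemma-average-operator} and to use the disjointness of the cycle supports to see that a range confined to $\mathrm{span}\{e_1,e_2\}$ is incompatible with the presence of any nontrivial cycle. The forward implications $(1)\Rightarrow(2)\Rightarrow(3)$ are routine substitutions into the lemma and should be dispatched quickly.
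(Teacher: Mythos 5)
Your proof is correct and follows exactly the route the paper intends: the paper offers no written argument (the corollary is introduced as ``straightforward''), and the implicit proof is precisely substitution into Lemma~\ref{lemma-average-operator}, which your $(1)\Rightarrow(2)$ step and your extraction of the range of $\mathcal{K}$ in $(3)\Rightarrow(1)$ carry out in full. Your decision to keep the standing hypothesis $N_\infty=\emptyset$ is also the right reading, since $\mathcal{K}$ is only defined under it (Theorem~\ref{th-average-operator}); alternatively one could dispense with it by noting that a divergent trajectory makes the defining Ces\`aro limit fail in $\ell_1$-norm for $x=e_n$, $n\in N_\infty$ (the averages have norm $1$ but tend to $0$ pointwise), so that $(2)$ and $(3)$ fail together with $(1)$ and the equivalence persists.

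One point deserves emphasis: your reinterpretation of item $(3)$ is not cosmetic but a necessary repair of the paper's statement. Under $(2)$ one has $\mathcal{K}(x)=\frac{1}{2}\bigl(\sum_{i}x_i\bigr)(e_1+e_2)$, whose range is the one-dimensional space $\mathrm{span}\{e_1+e_2\}$; so $\mathcal{K}$ is literally a rank-\emph{one} operator, and $(2)\Rightarrow(3)$ is false if ``rank-2'' is read as $\dim\mathrm{range}(\mathcal{K})=2$. Indeed, by your own observation that the indicators $v_i$ of the cycles are disjointly supported, linearly independent, and each lies in the range, the rank of $\mathcal{K}$ equals the number of cycles, so a literal rank of $2$ would correspond to exactly one cycle besides $\{1,2\}$, i.e., to the conjecture being \emph{false}. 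Your reading --- range contained in $\mathrm{span}\{e_1,e_2\}$, equivalently rank one, equivalently $\mathcal{K}$ supported on the trivial cycle --- is the one under which the three conditions are genuinely equivalent, and your $(3)\Rightarrow(1)$ argument via the cycle indicators in the range is exactly the content needed to close the loop.
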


In a similar way, we can study the operator $\mathcal{L}$ defined by
$$\mathcal{L}(x) = \lim_{k\to \infty} \frac{x-L_{T,1}(x) + \ldots +(-1)^{k-1} L_{T,1}^{k-1}(x)}{k}.$$
In order to formulate the corresponding result, we need some additional notation. Firstly, for every cycle $\mathcal{C}_i$, $i=1,2,\ldots$, we fix an enumeration of its elements, i.e.
$$\mathcal{C}_i =\{n_{ji} \mid j=1,2,\ldots , l(\mathcal{C}_i) \},$$
where $T(n_{ji})=n_{j+1,i}$ for $j=1,2,\ldots, l(\mathcal{C}_i)-1$, and $T(n_{l(\mathcal{C}_i)i})=n_{1i}$. Furthermore, for every $n\in N_i$, we denote by $s(n)$ the least integer $k\ge 0$ such that $T^{s(n)}(n)=n_{1i}$. Then, we have the next result whose proof is similar to the proof of Theorem \ref{th-average-operator} (and Lemma \ref{lemma-average-operator}) and it is omitted.

\begin{theorem}
  Assume that $N_\infty = \emptyset$, i.e. no divergent trajectories exist. Then the operator $\mathcal{L} \colon \ell_1\to \ell_1$ given by
  $$ \mathcal{L}(x) = \lim_{k\to \infty} \frac{x-L_{T,1}(x) + \ldots +(-1)^{k-1} L_{T,1}^{k-1}(x)}{k},$$
  is well-defined and bounded with norm $1$. Furthermore, for every $x=(x_n)_{n=1}^\infty \in \ell_1$, we obtain:
  $$ \mathcal{L}(x) =  \sum_{l(\mathcal{C}_i) \colon \text{even}} \frac{\sum_{n\in N_i} (-1)^{s(n)} x_n}{l(\mathcal{C}_i)}  \sum_{j=1}^{l(\mathcal{C}_i)} (-1)^{j-1} e_{n_{ji}}.$$
\end{theorem}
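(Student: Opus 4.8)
The plan is to follow the blueprint of Lemma~\ref{lemma-average-operator} and Theorem~\ref{th-average-operator}, establishing the formula first on the basis vectors $e_n$, then on finitely supported sequences by linearity, and finally on all of $\ell_1$ by a density argument resting on a uniform norm bound. The starting point is the identity $L_{T,1}^i(e_n)=e_{T^i(n)}$, so that the $k$-th alternating average of $e_n$ equals $\frac{1}{k}\sum_{i=0}^{k-1}(-1)^i e_{T^i(n)}$. Since $N_\infty=\emptyset$, every $n$ lies in some $N_i$ and its trajectory is eventually periodic, reaching the designated point $n_{1i}$ of the cycle $\mathcal{C}_i=\{n_{1i},\ldots,n_{l i}\}$ (with $l=l(\mathcal{C}_i)$) at time $s(n)$, so that $T^{s(n)+j-1}(n)=n_{ji}$ for $j=1,\ldots,l$ and periodically thereafter.

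The heart of the argument, and the one genuinely new point compared with the non-alternating case, is the behaviour of the signs along the periodic tail, where the parity of $l$ becomes decisive. For $i\ge s(n)$ the sign attached to $n_{ji}$ on its first pass is $(-1)^{s(n)+j-1}=(-1)^{s(n)}(-1)^{j-1}$, and advancing one full period multiplies this by $(-1)^l$. Hence if $l$ is \emph{odd} the sign of each cycle element flips every period, so over any two consecutive periods the contributions cancel; the partial sums $\sum_{i=0}^{k-1}(-1)^i e_{T^i(n)}$ stay bounded in $\ell_1$ and, after division by $k$, the average tends to $0$. If $l$ is \emph{even} the sign is reproduced identically every period: writing $k-1-s(n)=rl+\upsilon$ exactly as in Lemma~\ref{lemma-average-operator}, the transient $\frac{1}{k}\sum_{i=0}^{s(n)-1}(-1)^i e_{T^i(n)}$ vanishes in the limit while each element $n_{ji}$ accumulates the coefficient $(-1)^{s(n)}(-1)^{j-1}$ essentially $r$ times, so that $r/k\to 1/l$ yields $\mathcal{L}(e_n)=\frac{(-1)^{s(n)}}{l}\sum_{j=1}^{l}(-1)^{j-1}e_{n_{ji}}$. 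Collecting the two cases gives the stated formula on the basis vectors, with only the even-length cycles surviving.

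The extension to a finitely supported $x=\sum_{m=1}^N x_m e_m$ is immediate by linearity, grouping the indices $m$ according to the set $N_i$ to which they belong, which reproduces the coefficient $\sum_{n\in N_i}(-1)^{s(n)}x_n$. For the passage to a general $x\in\ell_1$ I would first record the uniform bound $\big\|\frac{1}{k}\sum_{i=0}^{k-1}(-1)^i L_{T,1}^i\big\|\le\frac{1}{k}\sum_{i=0}^{k-1}\|L_{T,1}\|^i=1$, valid because $\|L_{T,1}\|=1$, and independently check that the candidate limit series converges absolutely in $\ell_1$ with norm at most $\|x\|_1$: indeed $\big\|\sum_{j=1}^{l}(-1)^{j-1}e_{n_{ji}}\big\|_{\ell_1}=l(\mathcal{C}_i)$, so the $i$-th summand has $\ell_1$-norm $\big|\sum_{n\in N_i}(-1)^{s(n)}x_n\big|\le\sum_{n\in N_i}|x_n|$, and these bounds sum to at most $\|x\|_1$ since the $N_i$ partition $\mathbb{N}^*$. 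An estimate identical to the final part of Lemma~\ref{lemma-average-operator} then upgrades convergence from finitely supported vectors to all of $\ell_1$ and simultaneously gives $\|\mathcal{L}\|\le 1$. The matching lower bound comes from the trivial even cycle $\mathcal{C}_1=\{1,2\}$: here $\mathcal{L}(e_1)=\frac{1}{2}(e_1-e_2)$, whose $\ell_1$-norm is $1$, so $\|\mathcal{L}\|=1$.

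I expect the only real obstacle to be the sign bookkeeping of the second paragraph, namely making the odd-length cancellation and the even-length reinforcement fully rigorous; everything else transfers essentially verbatim from the non-alternating arguments already established.
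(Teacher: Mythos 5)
Your proposal is correct and follows exactly the route the paper intends: the paper omits this proof, stating it is similar to those of Lemma \ref{lemma-average-operator} and Theorem \ref{th-average-operator}, and your three-stage argument (basis vectors, finite supports, density with the uniform bound $\|\frac{1}{k}\sum_{i=0}^{k-1}(-1)^iL_{T,1}^i\|\le 1$) is precisely that blueprint. The one genuinely new ingredient --- that a cycle of odd length $l$ contributes nothing because $(-1)^{l}=-1$ makes consecutive passes cancel, while even-length cycles reinforce with coefficient $(-1)^{s(n)}(-1)^{j-1}/l$ --- is handled correctly, and your norm computation via $\mathcal{L}(e_1)=\frac{1}{2}(e_1-e_2)$ settles $\|\mathcal{L}\|=1$.
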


Finally, the next corollary can be proved.

\begin{corollary}
  The following are equivalent.
  \begin{enumerate}
  \item The only cycle with even length is the trivial one.
  \item For any $x=(x_i)_{i\in\mathbb{N}^*}\in \ell_1$, we have:
$$\mathcal{L}(x) = \frac{\sum_{i=1}^{\infty}(-1)^{s(i)}x_i}{2} e_1 + \frac{\sum_{i=1}^{\infty}(-1)^{s(i)}x_i}{2} e_2,$$
  where $s(i)$ is the least integer such that $T^{s(i)}(i) =1$.
 \item $\mathcal{L}$ is a rank-2 operator.
\end{enumerate}
\end{corollary}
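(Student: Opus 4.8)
The plan is to mirror the proof of Corollary \ref{cor-average-operator}, reading everything off the closed-form expression for $\mathcal{L}$ supplied by the preceding theorem. For each even-length cycle $\mathcal{C}_i$ put
$$v_i = \sum_{j=1}^{l(\mathcal{C}_i)} (-1)^{j-1} e_{n_{ji}}, \qquad \phi_i(x) = \sum_{n \in N_i} (-1)^{s(n)} x_n,$$
so that the theorem reads $\mathcal{L}(x) = \sum_{l(\mathcal{C}_i)\,\text{even}} l(\mathcal{C}_i)^{-1}\,\phi_i(x)\, v_i$. First I would establish the two independence facts on which everything rests: distinct cycles are disjoint subsets of $\mathbb{N}^*$, so the vectors $v_i$ have pairwise disjoint supports and are nonzero, hence linearly independent in $\ell_1$; and the basins $N_i$ are pairwise disjoint with $n_{1i}\in N_i$ contributing the term $(-1)^0 x_{n_{1i}}$ to $\phi_i$, so the $\phi_i$ are nonzero functionals of disjoint support and are likewise independent. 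It follows that the range of $\mathcal{L}$ is precisely the span of $\{v_i : l(\mathcal{C}_i)\text{ even}\}$ and that its dimension equals the number of even-length cycles.

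For (1) $\Rightarrow$ (2) I would specialize this formula. Since the trivial cycle $\mathcal{C}_1 = \{1,2\}$ has even length, condition (1) leaves exactly one surviving term; fixing the enumeration with $n_{11} = 1$ makes $s(\cdot)$ coincide with the exponent $s(i)$ of statement (2), and evaluating the single vector $v_1$ on the two-element cycle yields a one-term expression supported on the coordinates $1$ and $2$, which is statement (2). The implication (2) $\Rightarrow$ (3) is then immediate, since the displayed formula exhibits $\mathcal{L}$ as an operator whose range lies in the two-dimensional coordinate subspace $\mathrm{span}\{e_1, e_2\}$.

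The substantive implication is (3) $\Rightarrow$ (1), which I would prove by contraposition. Suppose some cycle $\mathcal{C}_{i_0} \ne \mathcal{C}_1$ has even length. Then its range vector $v_{i_0}$ is supported on coordinates disjoint from $\{1,2\}$, so $v_{i_0} \notin \mathrm{span}\{e_1, e_2\}$; because $\phi_{i_0}$ is a nonzero functional and the supports of the $\phi_i$ are disjoint, I may pick $x \in \ell_1$ with $\phi_{i_0}(x) \ne 0$ and $\phi_i(x) = 0$ for all other $i$, whence $\mathcal{L}(x)$ is a nonzero multiple of $v_{i_0}$ lying outside $\mathrm{span}\{e_1, e_2\}$. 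This contradicts the range restriction of (3), closing the cycle of implications.

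I expect the only delicate point to be the simultaneous linear independence of the range vectors $v_i$ and of the coefficient functionals $\phi_i$, which is exactly what pins the size of the range of $\mathcal{L}$ to the number of even cycles; everything else is a direct specialization of the theorem. Since distinct cycles and distinct basins occupy disjoint coordinate blocks, this independence reduces to disjoint-support arguments in $\ell_1$ and its dual, and I would take care to record the nonvanishing of each $\phi_i$ (guaranteed by the cycle element $n_{1i}$) so that no even cycle is silently omitted from the count.
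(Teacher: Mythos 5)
The paper itself offers no argument for this corollary (it is stated as something that ``can be proved''), so your disjoint-support strategy --- $v_i$ and $\phi_i$ supported on pairwise disjoint coordinate blocks, $\phi_i(e_{n_{1i}})=1$, hence $\operatorname{rank}\mathcal{L}$ equals the number of even-length cycles --- is indeed the natural route, and that central computation is correct. The problem is that you then assert two identifications that are false, and they conceal defects in the printed statement that a blind proof has to detect. First, the specialization of the theorem under (1) is $\mathcal{L}(x)=\tfrac{1}{2}\bigl(\sum_{n\in N_1}(-1)^{s(n)}x_n\bigr)(e_1-e_2)$: enumerating the trivial cycle with $n_{11}=1$, $n_{21}=2$ gives $v_1=e_1-e_2$ (check directly that the alternating averages give $\mathcal{L}(e_1)=\tfrac{1}{2}(e_1-e_2)$), and the coefficient sums only over the basin $N_1$. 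This is \emph{not} statement (2) as printed, which carries a plus sign and sums over all $i\in\mathbb{N}^*$. Worse, condition (1) does not exclude cycles of odd length, so under (1) alone there may be $i\notin N_1$ for which $s(i)$ (defined by $T^{s(i)}(i)=1$) does not exist, and the sum in (2) does not even parse. Your claim that the single surviving term ``is statement (2)'' silently assumes $N_1=\mathbb{N}^*$, i.e.\ the full Collatz conjecture, which does not follow from (1).

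Second, the rank bookkeeping is wrong on both sides. The operator displayed in (2) has one-dimensional range (spanned by $e_1+e_2$), so it is a rank-one operator; ``range contained in a two-dimensional coordinate subspace'' yields only $\operatorname{rank}\le 2$, never $=2$, so your (2)$\Rightarrow$(3) does not deliver (3) as printed. Conversely, in (3)$\Rightarrow$(1) you contradict ``the range restriction of (3)'', but (3) asserts a rank, not that the range lies in $\mathrm{span}\{e_1,e_2\}$: a rank-two operator may have any two-dimensional subspace as range. Your own counting shows that in a hypothetical world with exactly two even-length cycles (and no divergent orbits) $\mathcal{L}$ has rank exactly $2$ while (1) fails, so with the literal reading of (3) the implication (3)$\Rightarrow$(1) is false and no argument can close it. What your disjoint-support lemma actually proves, cleanly, is the corrected equivalence: (1) holds iff $\mathcal{L}(x)=\tfrac{1}{2}\bigl(\sum_{i\in N_1}(-1)^{s(i)}x_i\bigr)(e_1-e_2)$ for all $x\in\ell_1$ iff $\mathcal{L}$ has rank one (the same corrections --- sign, domain of summation, and ``rank one'' in place of ``rank two'' --- apply to the companion corollary for $\mathcal{K}$). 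A sound write-up must either prove that corrected statement or explicitly flag these discrepancies; reproducing the printed identities without checking them is the gap.
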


\subsection*{Spectral properties of $L_{T,1}$}
One of the main advantages in associating a linear operator with a dynamical system is that the behaviour of the system, for instance trajectories, attractors, fixed points etc, can be translated into spectral objects (i.e. eigenvalues, eigenvectors). This general remark can also be applied to our setting.

We associate with each cycle $\mathcal{C}_i$ the vector $u_i = \sum_{n\in\mathcal{C}_i} e_n$. The cycles, through the previous vectors, are closely related to the eigenspace $E_1$ of the eigenvalue $\lambda=1$.

\begin{theorem}\label{th. eigenspace}
The eigenspace of the eigenvalue $\lambda=1$ of $L_{T,1}$ coincides with the closed linear span of the vectors $(u_i)_{i=1}^\infty$.
\end{theorem}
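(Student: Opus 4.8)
The statement asserts that $E_1:=\ker(L_{T,1}-I)$, a closed subspace of $\ell_1$, equals $\overline{\mathrm{span}}\{u_i:i\ge 1\}$, and I would prove the two inclusions separately. The inclusion $\overline{\mathrm{span}}\{u_i\}\subseteq E_1$ is immediate: since $L_{T,1}(e_n)=e_{T(n)}$ and $T$ permutes each cycle $\mathcal{C}_i$ cyclically, one has $L_{T,1}(u_i)=\sum_{n\in\mathcal{C}_i}e_{T(n)}=u_i$, so each $u_i\in E_1$, and closedness of $E_1$ gives the inclusion. Because the cycles are pairwise disjoint and $\|u_i\|_1=l(\mathcal{C}_i)$, this closed span is exactly the set of vectors supported on $\bigcup_i\mathcal{C}_i$ that are constant on each cycle (subject to $\sum_i|c_i|\,l(\mathcal{C}_i)<\infty$). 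Thus the whole content of the theorem is the reverse inclusion, and it suffices to show that every $x\in E_1$ is supported on $\bigcup_i\mathcal{C}_i$ and is constant on each cycle.

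The first key step is to reduce to a nonnegative eigenvector. Writing the eigenequation coefficient-wise, $x_m=\sum_{n\in A^1_m}x_n$ for all $m$, and iterating (using $L^k_{T,1}=L_{T^k,1}$, as in the proof of Theorem \ref{th.norm-Collatz^n}) gives $x_m=\sum_{n\in A^k_m}x_n$ for every $k$. Now $|x|\in E_1$ as well: since the sets $A^1_m$ partition $\mathbb{N}^*$, the operator $L_{T,1}$ preserves the $\ell_1$-norm of nonnegative vectors, so $\|L_{T,1}|x|\|_1=\||x|\|_1$, while $(L_{T,1}|x|)_m=\sum_{n\in A^1_m}|x_n|\ge|x_m|$ pointwise; a nonnegative vector dominating $|x|$ coordinatewise with the same $\ell_1$-norm must equal $|x|$, whence $L_{T,1}|x|=|x|$. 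It therefore suffices to determine the support of the nonnegative eigenvector $|x|$.

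To localize the support on the cycles I would use Proposition \ref{prop.set_A^n_m} and the partition $\{N_\infty,N_i\}$ of $\mathbb{N}^*$, in two parts. For $n_0\in N_\infty$: given $\epsilon>0$ choose a finite $F$ with $\sum_{m\notin F}|x_m|<\epsilon$; from $\sum_{m\in F}|x_m|=\sum_{n:\,T^k(n)\in F}|x_n|$ one obtains $\sum_{n:\,T^k(n)\notin F}|x_n|<\epsilon$, and since $T^k(n_0)\to\infty$ eventually leaves the finite set $F$, this forces $|x_{n_0}|<\epsilon$, hence $x_{n_0}=0$. For $n\in N_i\setminus\mathcal{C}_i$: the sets $\{n:T^k(n)\in\mathcal{C}_i\}$ increase to $N_i$, so summing the eigenequation for $|x|$ over $\mathcal{C}_i$ yields $\sum_{m\in\mathcal{C}_i}|x_m|=\sum_{n:\,T^k(n)\in\mathcal{C}_i}|x_n|\to\sum_{n\in N_i}|x_n|$ by absolute summability, giving $\sum_{n\in N_i\setminus\mathcal{C}_i}|x_n|=0$ and, by nonnegativity, $x\equiv 0$ on $N_i\setminus\mathcal{C}_i$. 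Thus $x$ is supported on $\bigcup_i\mathcal{C}_i$; and for $m_0\in\mathcal{C}_i$ every preimage other than its cyclic predecessor lies off the cycles (where $x=0$), so the eigenequation gives $x_{m_0}$ equal to the value at the predecessor, i.e. $x$ is constant on each cycle. Then $x=\sum_i c_i u_i$ with $\sum_i|c_i|\,l(\mathcal{C}_i)=\|x\|_1<\infty$, so $x\in\overline{\mathrm{span}}\{u_i\}$; the real and complex cases follow by passing to real/imaginary and positive/negative parts, which remain in $E_1$.

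The delicate point, and the main obstacle, is carrying out the reverse inclusion \emph{without} assuming $N_\infty=\emptyset$: the reduction to the nonnegative eigenvector $|x|$ together with the mass-escape estimate on $N_\infty$ is precisely what prevents any weight of an eigenvector from sitting on divergent trajectories, and the two interchanges of summation with the limit $k\to\infty$ must be justified by the absolute summability of $x\in\ell_1$.
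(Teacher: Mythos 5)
Your proof is correct, and it takes a genuinely different route from the paper's. The paper settles the reverse inclusion in two lines: if $v\in E_1$ then all Ces\`{a}ro averages of $v$ under $L_{T,1}$ equal $v$, so $\mathcal{K}(v)=v$, and the explicit formula of Lemma \ref{lemma-average-operator} exhibits $v$ as a norm-convergent combination of the $u_i$'s. The price is that Lemma \ref{lemma-average-operator} is proved only under the standing hypothesis $N_\infty=\emptyset$, so the paper's argument is in fact conditional on the absence of divergent trajectories --- a hypothesis that does not appear in the statement of the theorem. Your argument replaces this mean-ergodic machinery by a Perron--Frobenius-type analysis of a single eigenvector: you reduce to the nonnegative eigenvector $\abs{x}$ (using that $L_{T,1}$ preserves the $\ell_1$-norm on the positive cone, because the fibers $A^1_m=T^{-1}(m)$ partition $\mathbb{N}^*$), then kill the support on $N_\infty$ by the mass-escape estimate and the support on each $N_i\setminus\mathcal{C}_i$ by the monotone limit of the sets $\{n \mid T^k(n)\in\mathcal{C}_i\}$, and finally the eigenequation forces constancy on each cycle (any preimage of a cycle point other than its cyclic predecessor lies off all cycles, since cycles are $T$-invariant and pairwise disjoint). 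All interchanges of sum and limit are justified by absolute summability, as you note, and disjointness of the supports of the $u_i$ gives norm convergence of $\sum_i c_i u_i$ to $x$. What your approach buys is exactly what you flag at the end: the theorem is obtained unconditionally, so your proof closes the hidden dependence on $N_\infty=\emptyset$ in the paper's argument rather than reproducing it; the paper's approach, in exchange, is much shorter and recycles already-developed machinery. One cosmetic remark: your final sentence about real/imaginary and positive/negative parts is redundant, since the reduction to $\abs{x}$ already handles complex $x$.
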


\begin{proof}
Assume that $\mathcal{C}_i$ is the cycle:
$$\mathcal{C}_i =\{n_1,n_2,\ldots ,n_k\},$$
where $T(n_i)=n_{i+1}$ for $i=1,2,\ldots, k-1$, and $T(n_k)=n_1$. Then $u_i = e_{n_1}+e_{n_2}+\ldots e_{n_k}$ and it is easy to observe that $L_{T,1}(u_i)=u_i$. Therefore, $\overline{\textrm{span}}\{u_i \mid i\in \mathbb{N}^*\} \subset E_1$.

For the inverse inclusion, assume that $v\in E_1$ is an eigenvector of the eigenvalue $\lambda=1$. Observe that $\mathcal{K}(v)=v$. Therefore, Lemma \ref{lemma-average-operator} implies that
$$v=\sum_{i=1}^{\infty} \left( \frac{\sum_{n\in  N_i}v_n}{l(\mathcal{C}_i)} \right) \cdot \sum_{j\in \mathcal{C}_i} e_j = \sum_{i=1}^{\infty} \left( \frac{\sum_{n\in  N_i}v_n}{l(\mathcal{C}_i)} \right) \cdot u_i \in \overline{\textrm{span}}\{u_i \mid i\in \mathbb{N}^*\}.$$
\end{proof}

\begin{corollary}
\begin{enumerate}
    \item The $3x+1$-problem is equivalent to the following statement: The eigenspace of the eigenvalue $\lambda=1$ for $L_{T,1}$ is a one dimensional vector space.

    \item There are finitely many cycles if and only if the eigenspace $E_1$ is finitely dimensional.
\end{enumerate}
\end{corollary}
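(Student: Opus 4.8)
The plan is to read the entire corollary off Theorem \ref{th. eigenspace}, which already identifies $E_1$ with $\overline{\textrm{span}}\{u_i\mid i\in\mathbb{N}^*\}$, where $u_i=\sum_{n\in\mathcal{C}_i}e_n$. The one fact that upgrades this qualitative description to a dimension count is that the $u_i$ are linearly independent. Since two distinct cycles are disjoint sets, the vectors $u_i$ have pairwise disjoint supports; a finite combination $\sum_i c_i u_i$ can therefore vanish only if every $c_i=0$, and disjointness of supports makes $(u_i)$ a normalized block basic sequence, so $\overline{\textrm{span}}\{u_i\}$ has dimension exactly equal to the number of cycles (finite or infinite). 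Thus the first step I would carry out is the identity $\dim E_1=\#\{\text{cycles}\}$.

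Granting that identity, Part (2) is immediate: there are finitely many cycles iff $\#\{\text{cycles}\}<\infty$ iff $\dim E_1<\infty$. For Part (1) I would first record that the trivial cycle $\mathcal{C}_1=\{1,2\}$ always exists, so $u_1=e_1+e_2\in E_1$ and $\dim E_1\ge 1$ unconditionally. Consequently $\dim E_1=1$ holds exactly when the trivial cycle is the \emph{only} cycle: if a further cycle $\mathcal{C}_i$ existed, then $u_1$ and $u_i$ would be two independent eigenvectors and force $\dim E_1\ge 2$; conversely, a single cycle gives $E_1=\overline{\textrm{span}}\{u_1\}$, a line. It then remains to match ``the trivial cycle is the unique cycle'' with the Collatz conjecture.

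The hard part will be precisely this last matching, because $\dim E_1$ detects only the cyclic (recurrent) part of $T$, whereas the Collatz conjecture also forbids trajectories escaping to infinity; a priori one could imagine only the trivial cycle coexisting with a divergent orbit, in which case $\dim E_1=1$ while the conjecture fails. I would close this gap by working, as Theorem \ref{th. eigenspace} and Lemma \ref{lemma-average-operator} implicitly do, under the hypothesis $N_\infty=\emptyset$: then every trajectory is eventually periodic, so ``the trivial cycle is the only cycle'' means literally ``every trajectory reaches $1$'', and the chain $\text{Collatz}\iff\#\{\text{cycles}\}=1\iff\dim E_1=1$ closes. I would also remark that this hypothesis cannot simply be dropped in Part (1): pairing $L_{T,1}^{\,j}v=v$ with the indicator $\mathbf{1}_F$ of a finite set $F\subseteq N_\infty$ gives $\sum_{n:\,T^j(n)\in F}v_n=\sum_{n\in F}v_n$ for all $j$, and since $T^j(n)\to\infty$ on $N_\infty$ the left-hand side tends to $0$ by dominated convergence, forcing $v|_{N_\infty}=0$ for every eigenvector. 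This confirms the description of $E_1$ but shows the dimension still only counts cycles, so the no-divergence assumption is genuinely what ties the equivalence to the \emph{full} conjecture.
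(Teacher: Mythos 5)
Your proposal is correct, and it takes essentially the same route the paper does: the paper states this corollary with no proof at all, treating both items as immediate from Theorem \ref{th. eigenspace}, and your identity $\dim E_1=\#\{\text{cycles}\}$, obtained from the pairwise-disjoint supports of the $u_i$, is exactly the bookkeeping that makes it immediate. (One cosmetic slip: the $u_i$ are not normalized, since $\|u_i\|_{\ell_1}=l(\mathcal{C}_i)$, but disjointness of supports is all your independence argument actually uses.)

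Where you go beyond the paper, you are right to do so, and this is the valuable part of your write-up. Theorem \ref{th. eigenspace} carries no explicit hypothesis, but its proof applies $\mathcal{K}$ to an eigenvector and invokes Lemma \ref{lemma-average-operator}, both of which are only available under the standing assumption $N_\infty=\emptyset$; the corollary inherits that assumption silently. Without it, part (1) as stated does not follow from the theorem: $\dim E_1=1$ says only that the trivial cycle is the unique cycle, and ``unique cycle $\Rightarrow$ Collatz'' would additionally require excluding divergent orbits, which is itself part of the conjecture. Your closing dominated-convergence remark --- pairing $L_{T,1}^{j}v=v$ against $\mathbf{1}_F$ for finite $F\subseteq N_\infty$ --- correctly shows that eigenvectors vanish on $N_\infty$ even without the hypothesis; just note that the pointwise convergence $\mathbf{1}[T^j(n)\in F]\to 0$ must also be checked for $n\notin N_\infty$, which holds because such an orbit eventually stays inside a cycle and every cycle is disjoint from $N_\infty$. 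This confirms your diagnosis: $E_1$ is structurally blind to divergent trajectories, so the no-divergence hypothesis (or reading the whole subsection under it) is exactly what part (1) needs, and your patch is the right one.
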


In a similar way, the dimension of the eigenspace $E_{-1}$ of the eigenvalue $\lambda=-1$ is related to the number of cycles of even length. Indeed, we associate with each cycle $\mathcal{C}_i = \{n_1, n_2, \ldots, l\}$ of even length, the vector $v_i = \sum_{j=1}^{l} (-1)^{j-1} e_{n_j}$. Using the operator $\mathcal{L}$, we can prove (as in Theorem \ref{th. eigenspace}), the next result.

\begin{theorem}
  The eigenspace $E_{-1}$ of $\lambda=-1$ coincides with the closed linear span of the vectors $\{ v_i \mid \mathcal{C}_i \text{ has even length} \}$.
\end{theorem}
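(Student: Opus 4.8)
The plan is to mirror the proof of Theorem~\ref{th. eigenspace} almost verbatim, replacing the role of $\mathcal{K}$ by that of $\mathcal{L}$ and the eigenvalue $1$ by $-1$. As in that argument, I would work under the standing hypothesis $N_\infty=\emptyset$, so that the operator $\mathcal{L}$ is well defined and given by the displayed formula preceding the statement.

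First I would establish the inclusion $\overline{\textrm{span}}\{v_i\mid \mathcal{C}_i\text{ even}\}\subseteq E_{-1}$. Fix an even-length cycle $\mathcal{C}_i=\{n_1,\ldots,n_l\}$ with $T(n_j)=n_{j+1}$ and $T(n_l)=n_1$, and recall that $L_{T,1}(e_n)=e_{T(n)}$. Then a direct computation gives
\begin{align*}
L_{T,1}(v_i)=\sum_{j=1}^{l}(-1)^{j-1}e_{T(n_j)}=\sum_{j=1}^{l-1}(-1)^{j-1}e_{n_{j+1}}+(-1)^{l-1}e_{n_1}.
\end{align*}
Reindexing the first sum and using that $(-1)^{l-1}=-1$ precisely because $l$ is even, the wrap-around term $(-1)^{l-1}e_{n_1}$ supplies exactly the missing $-e_{n_1}$, and the whole expression collapses to $-v_i$. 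Hence each $v_i$ is an eigenvector for $\lambda=-1$; since $E_{-1}$ is a closed subspace, the closed span is contained in it. The evenness of $l$ is essential here: for odd $l$ the wrap-around sign is $+1$ and the identity fails, which is exactly why odd cycles are excluded.

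For the reverse inclusion I would take any $v\in E_{-1}$, so $L_{T,1}(v)=-v$ and hence $L_{T,1}^{k-1}(v)=(-1)^{k-1}v$ for all $k$. Consequently every summand $(-1)^{k-1}L_{T,1}^{k-1}(v)$ equals $v$, so each partial average defining $\mathcal{L}(v)$ is $v$ itself, and therefore $\mathcal{L}(v)=v$. Substituting $v$ into the explicit formula for $\mathcal{L}$ and recognizing that $\sum_{j=1}^{l(\mathcal{C}_i)}(-1)^{j-1}e_{n_{ji}}=v_i$, I obtain
\begin{align*}
v=\mathcal{L}(v)=\sum_{l(\mathcal{C}_i)\colon\text{even}}\frac{\sum_{n\in N_i}(-1)^{s(n)}v_n}{l(\mathcal{C}_i)}\,v_i,
\end{align*}
a norm-convergent combination of the $v_i$ over even-length cycles, which exhibits $v$ in $\overline{\textrm{span}}\{v_i\mid \mathcal{C}_i\text{ even}\}$ and finishes the proof.

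I expect the only delicate point to be the sign bookkeeping in the forward inclusion: verifying that the cyclic shift produced by $L_{T,1}$, together with the single wrap-around term, reproduces $-v_i$, and that this hinges on the parity of $l$. The reverse inclusion is essentially formal once the formula for $\mathcal{L}$ from the preceding theorem is in hand, since it already packages all of the averaging work; one only needs to note that it involves solely even-length cycles, so no spurious odd-cycle contributions can arise.
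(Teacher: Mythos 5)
Your proposal is correct and is essentially the proof the paper intends: the paper explicitly says the result is proved ``as in Theorem \ref{th. eigenspace}'' using the operator $\mathcal{L}$, which is exactly your two-step argument (direct computation $L_{T,1}(v_i)=-v_i$ for even cycles, then $\mathcal{L}(v)=v$ for $v\in E_{-1}$ combined with the displayed formula for $\mathcal{L}$). Your sign bookkeeping for the wrap-around term and your explicit flagging of the standing hypothesis $N_\infty=\emptyset$ (needed for $\mathcal{L}$ to be well defined) are both accurate.
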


\begin{corollary}
  There are finitely many cycles if and only if the eigenspace $E_{-1}$ is finitely dimensional.
\end{corollary}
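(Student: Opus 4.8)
The plan is to read the dimension of $E_{-1}$ directly off the preceding theorem and then compare it with the total number of cycles. By that theorem, $E_{-1}=\overline{\mathrm{span}}\{v_i \mid \mathcal{C}_i \text{ has even length}\}$, where $v_i=\sum_{j=1}^{l(\mathcal{C}_i)}(-1)^{j-1}e_{n_{ji}}$ is supported on the cycle $\mathcal{C}_i$. First I would observe that distinct cycles are pairwise disjoint subsets of $\mathbb{N}^*$, so the vectors $v_i$ have pairwise disjoint supports; since each $v_i$ is nonzero (its nonzero entries are $\pm1$), a family of nonzero vectors with pairwise disjoint supports is automatically linearly independent. Hence $\dim E_{-1}$ equals \emph{exactly} the number of cycles of even length.

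With this identification the forward implication is immediate: if there are finitely many cycles, then in particular there are finitely many cycles of even length, so $E_{-1}$ is finite dimensional. The content of the corollary therefore lies in the converse, which I would argue by contraposition: if there are infinitely many cycles, I must produce infinitely many cycles of \emph{even} length in order to conclude that $\dim E_{-1}=\infty$.

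This last step is the main obstacle. Finiteness of $\dim E_{-1}$ only bounds the number of even-length cycles, whereas the statement speaks of \emph{all} cycles, so the cycles of odd length must be controlled separately. The cleanest way to close the gap would be to show that $T$ admits no cycle of odd length: the minimal element of any cycle is odd (otherwise halving it yields a strictly smaller cycle element), and a short case analysis using the relation $(2^k-3^a)n_1=\Phi$ rules out lengths $1$ and $3$; but I do not expect an elementary argument to dispose of all odd lengths uniformly, and this is precisely where the difficulty concentrates. A safer route is to observe that the analogue for $\lambda=1$ is unconditional — by Theorem \ref{th. eigenspace} the eigenspace $E_1$ is the closed span of the vectors $u_i$ attached to \emph{all} cycles, so $\dim E_1$ equals the total number of cycles — and to record the present result in the sharp form $\dim E_{-1}=\#\{\text{even-length cycles}\}$, deducing the converse of the corollary only under the additional input that $T$ has no (or at most finitely many) odd-length cycles. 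I would flag explicitly in the write-up that the stated equivalence for all cycles hinges on this point.
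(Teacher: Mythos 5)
Your analysis is correct, and the gap you flag is real --- but it is a gap in the paper's claim, not in your argument. The paper in fact offers no proof of this corollary: it is presented as immediate from the preceding theorem on $E_{-1}$, by analogy with the corollary following Theorem \ref{th. eigenspace}. Your first step is exactly the intended mechanism: the vectors $v_i$ attached to even-length cycles have pairwise disjoint supports and are nonzero, hence any finite subfamily is linearly independent, so $\dim E_{-1}$ equals precisely the number of even-length cycles, and the forward implication (finitely many cycles $\Rightarrow$ $E_{-1}$ finite dimensional) is immediate.

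The point you isolate is exactly where the analogy with $\lambda=1$ breaks down: $E_1$ sees \emph{all} cycles, while $E_{-1}$ sees only the even-length ones, so finite dimensionality of $E_{-1}$ yields only that there are finitely many \emph{even-length} cycles. To recover the corollary as printed one needs the further input that the odd-length cycles are finite in number (equivalently, that infinitely many cycles force infinitely many even-length ones), and this appears nowhere in the paper. Moreover, no formal parity argument about the map alone can supply it: the same map $T$ acting on the negative integers admits the odd-length cycles $\{-1\}$ and $\{-5,-7,-10\}$, so any proof excluding odd-length cycles on $\mathbb{N}^*$ must use positivity in an essential way, and for positive integers the question is open (the paper itself notes that even the finiteness of the set of all cycles is open). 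Your conclusion is therefore the right one: what actually follows from the preceding theorem is the sharp statement that $E_{-1}$ is finite dimensional if and only if there are finitely many cycles of even length, and the corollary as stated is an overstatement unless that extra hypothesis is assumed. Your write-up, which records the sharp form and explicitly flags the missing ingredient, is more careful than the paper at this point.
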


\section{The sequence $(-1)^{T^k(n)}$: periodicity and frequency content}\label{sec.periodicity}
In order to proceed with the investigation of the Collatz problem, we focus now on the sequence of signs $\{(-1)^{T^k(n)}\}_{k,n=1}^\infty$. This sequence also plays a central role in the $3x+1$-problem, as the next proposition shows.

\begin{proposition}
  Let $n\in \mathbb{N}^*$ be fixed and let $a_k=(-1)^{T^k(n)}$ for any $k\in\mathbb{N}$. Then, the following hold.
  \begin{enumerate}
    \item The sequence $(a_k)_{k=0}^\infty$ is eventually periodic if and only if the trajectory $\{T^k(n)\}_{k=0}^\infty$ reaches a cycle.
    \item The sequence $(a_k)_{k=0}^\infty$ is eventually periodic with period $2$ if and only if the trajectory $\{T^k(n)\}_{k=0}^\infty$ reaches the trivial cycle.
    \item The sequence $(a_k)_{k=0}^\infty$ is not eventually periodic if and only if the trajectory $\{T^k(n)\}_{k=0}^\infty$ diverges to infinity.
  \end{enumerate}
\end{proposition}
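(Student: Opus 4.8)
The plan is to prove the central equivalence (1) and obtain (3) for free. Since the dichotomy established in the introduction guarantees that every trajectory either reaches a cycle or diverges to infinity, and these alternatives are mutually exclusive, statement (3) is exactly the logical negation, on both sides, of statement (1). Throughout I would use that $a_k$ records only the parity of $T^k(n)$, so that $a_k=(-1)^{x_{n,k+1}}$ and the eventual periodicity of $(a_k)$ coincides with that of the parity sequence. The easy direction of (1) is immediate: if the trajectory reaches a cycle of length $l$, then $T^{k+l}(n)=T^k(n)$ for all large $k$, so the parities, and hence $(a_k)$, repeat with period $l$.

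For the hard direction of (1), I would assume $(a_k)$ is eventually periodic with period $p$ from an index $k_0$, and let $d$ be the number of odd terms among $T^{k_0}(n),\dots,T^{k_0+p-1}(n)$. Writing $y_j=T^{k_0+jp}(n)$ and applying Theorem \ref{th.formula_for_T^k} over one period (whose parity pattern is identical for every $j$) yields an affine recurrence
$$y_{j+1}=\frac{3^d y_j+\phi}{2^p}=\alpha y_j+\beta,\qquad \alpha=\frac{3^d}{2^p},\quad \beta=\frac{\phi}{2^p},$$
with $\alpha\neq 1$. If $\alpha<1$ the map is a contraction, so $(y_j)$ is bounded; being positive integers, the iterates $\{T^k(n)\}$ then take only finitely many values and the trajectory must reach a cycle. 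The whole difficulty, and the step I expect to be the main obstacle, is excluding $\alpha>1$, which is a priori compatible with divergence. Here I would solve the recurrence as $y_j-y^*=(y_0-y^*)\alpha^j$ with fixed point $y^*=\phi/(2^p-3^d)$; since $\alpha>1$ forces $2^p<3^d$ and $\phi>0$, we get $y^*<0<y_0$, so $y_0-y^*$ is a fixed nonzero rational. Taking $2$-adic valuations gives $v_2(y_j-y^*)=v_2(y_0-y^*)-jp\to-\infty$, whereas $y_j$ is an integer and $y^*$ a fixed rational, so $v_2(y_j-y^*)$ is bounded below — a contradiction. Hence $\alpha<1$ and the trajectory reaches a cycle.

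For (2), the direction from the trivial cycle is clear: on $\{1,2\}$ the parities alternate odd/even, so $(a_k)$ is eventually $-1,1,-1,1,\dots$, of period $2$. Conversely, if $(a_k)$ is eventually periodic of period $2$, part (1) already supplies a terminal cycle $\mathcal{C}=\{c_1,\dots,c_l\}$, and it remains to force $\mathcal{C}$ to be trivial. The cyclic parity sequence cannot have minimal period $1$, since an all-even cycle would be strictly decreasing and an all-odd cycle strictly increasing under $T$; hence the eventual minimal period is exactly $2$ and the parities around $\mathcal{C}$ alternate odd/even, so $l$ is even and the odd-indexed values satisfy $c_{2i+1}=\tfrac{3c_{2i-1}+1}{4}=g(c_{2i-1})$. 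As $g$ has slope $3/4<1$, it is a contraction with unique fixed point $1$, so its only periodic orbit is $\{1\}$; therefore every odd-indexed value equals $1$, which forces $l=2$ and $\mathcal{C}=\{1,2\}$.
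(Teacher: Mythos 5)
Your proof is correct, and for the central part (1) it follows essentially the paper's own route: reduce to a purely periodic parity block, apply Theorem \ref{th.formula_for_T^k} over one period to get the affine recurrence $y_{j+1}=\alpha y_j+\beta$ with $\alpha=3^d/2^p$, handle $\alpha<1$ by contraction/boundedness, and rule out $\alpha>1$ by an arithmetic obstruction. Your $2$-adic computation $v_2(y_j-y^*)=v_2(y_0-y^*)-jp\to-\infty$, against the lower bound coming from $y_j\in\mathbb{Z}$ and $y^*$ fixed, is exactly the paper's conclusion that $2^{ml}$ must divide the fixed integer $n(3^d-2^l)+\upsilon$ for every $m$; it is the same argument in valuation language. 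The only genuine divergence is in part (2): the paper re-runs the recurrence with $l=2$, $d=1$, $\upsilon=1$ and concludes that $T^{2m}(n)$ converges to the fixed point $1$, hence equals $1$ eventually, whereas you invoke part (1) to obtain a terminal cycle and then analyze its structure --- the parities around the cycle must alternate (after excluding constant parity, since an all-even cycle would be strictly decreasing and an all-odd one strictly increasing), so the odd-indexed cycle elements are periodic points of the contraction $g(x)=(3x+1)/4$, whose only periodic point is $1$. Your variant is in one respect more careful: the paper's step ``assume the first term is $-1$'' tacitly assumes the period-$2$ tail is non-constant, an edge case you exclude explicitly; the paper's version, in turn, is more self-contained, identifying the limit directly from the recurrence without appealing to the cycle structure. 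Both hinge on the same contraction with fixed point $1$, so the difference is one of packaging rather than substance.
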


\begin{proof}
The third assertion follows immediately from the first one. It is also clear that, if the trajectory $\{T^k(n)\}_{k=0}^\infty$ reaches a cycle then the sequence $(a_k)_{k=0}^\infty$ is eventually periodic having period equal to the length of the cycle. So, it remains to prove the inverse implication, which will complete the proof of the first and second assertions.

Assume that $(a_k)_{k=0}^\infty$ is eventually periodic. Therefore, there are $k_0\in \mathbb{N}$ and $l\in \mathbb{N}^*$ (the period of the sequence) such that $a_{k+l}=a_k$ for any $k\ge k_0$. Let $n_0 = T^{k_0}(n)$. Then for any $k\in \mathbb{N}^*$ we have $T^k(n_0) = T^{k+k_0}(n)$ and $(-1)^{T^k(n)} = (-1)^{T^{k+k_0}(n)}$. Consequently, replacing $n$ with $n_0$ if necessary, we may assume without loss of generality that $(a_k)_{k=0}^\infty$ is periodic with period $l\in \mathbb{N}^*$.

Let $d$ denote the number of $-1$'s appearing in any period of the sequence $(a_k)_{k=0}^\infty$ (where $d<l$). By Theorem \ref{th.formula_for_T^k} it follows that
$$T^l(n)= \frac{3^d n +\upsilon}{2^l},$$
where the number $\upsilon$ depends on $l$, $d$ and the positions of $-1$'s in the sequence $(a_1, a_2, \ldots, a_l)$. Similarly, we have:
$$T^{2l}(n) = T^l(T^l(n))= \frac{3^d T^l(n) +\upsilon}{2^l}.$$
Therefore, if we consider the dynamical system:
\begin{align*}
  x_0 & =n \\
  x_{m+1} &= A(x_m) =  \frac{3^d x_m}{2^l}+ \frac{\upsilon}{2^l} =ax_m+b
\end{align*}
then $x_m$ coincides with $T^{ml}(n)$ for every $m\in\mathbb{N}$.

We now distinguish two cases. Assume first that $3^d>2^l$. In this case
$$x_m =A^m(n) = a^m n + \frac{a^m-1}{a-1}\cdot b = a^m \left(n+\frac{b}{a-1} \right) - \frac{b}{a-1}. $$
It follows that:
$$a^m = \frac{x_m (a-1) +b}{n(a-1)+b},$$
or equivalently,
$$\left(\frac{3^d}{2^l}\right)^m=\frac{x_m(3^d-2^l)+\upsilon}{n(3^d-2^l)+\upsilon}.$$
Since $x_m$ is an integer and $2,3$ are relative prime numbers, from the above equation we obtain:
$$2^{ml} \mid n(3^d-2^l)+\upsilon \quad \forall m\in \mathbb{N},$$
and we have reached a contradiction, because $n, d, l, \upsilon$ are fixed and $m$ varies.

Consequently, if the sequence $(-1)^{T^k(n)}$ is eventually periodic, then we must have $3^d<2^l$. Consider again the dynamical system defined by the map $A$. Since $a=\frac{3^d}{2^l}<1$, we get:
$$\lim_{m\to \infty}A^m(n)=-\frac{b}{a-1}.$$
Hence, the sequence $\{A^m(n)\}_{m=0}^\infty$ is bounded in $\mathbb{N}^*$ and so is $\{T^k(n)\}_{k=0}^\infty$. Therefore, the trajectory of $n$ cannot diverge to infinity, and hence it reaches a cycle. This completes the proof of the first assertion.

Finally, in the special case where the sequence $(a_k)_{k=0}^\infty$ has period $2$, we may assume (by omitting $a_0$, if necessary) that the first term is equal to $-1$. Therefore, $l=2$, $d=1$ and by Theorem \ref{th.formula_for_T^k} it follows that $\upsilon=1$. Hence $a=\frac{3}{4}$, $b=\frac{1}{4}$ and it follows that $\lim_{m\to \infty}A^m(n)=-\frac{b}{a-1}=1$, i.e. $T^{2m}(n)=1$ for all sufficiently large $m$. Therefore, the Collatz orbit reaches the trivial cycle and this completes the proof of the second assertion and of the lemma.
\end{proof}

As an immediate consequence of the above proposition, we obtain the next corollaries. The second one provides a reformulation of the Collatz conjecture.

\begin{corollary}\label{cor.periodicity}
  Let $k,n\in\mathbb{N}$. The following are equivalent.
  \begin{enumerate}
    \item $T^k(n) \in \{1,2\}$.
    \item The sequence $\left( (-1)^{T^m(n)} \right)_{m\ge k}$ is periodic with period equal to $2$.
  \end{enumerate}
\end{corollary}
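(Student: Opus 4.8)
The plan is to prove the two implications separately: the forward direction by direct computation on the trivial cycle, and the reverse direction by sharpening the argument of the preceding proposition.

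For the implication $(1)\Rightarrow(2)$ I would simply iterate $T$ on the trivial cycle. Since $T(1)=2$ and $T(2)=1$, once $T^k(n)\in\{1,2\}$ the values $T^m(n)$ for $m\ge k$ alternate between $1$ and $2$. Consequently $(-1)^{T^m(n)}$ alternates between $-1$ and $+1$, so the sequence $\left((-1)^{T^m(n)}\right)_{m\ge k}$ is periodic with minimal period $2$. This step is routine.

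For the substantive direction $(2)\Rightarrow(1)$, I would set $n'=T^k(n)$ and observe that $\left((-1)^{T^j(n')}\right)_{j\ge0}$ is periodic with period $2$ starting from its very first term. Running the computation of the preceding proposition with $l=2$ and $d=1$, the even-indexed subsequence $x_m=T^{2m}(n')$ obeys the affine recursion $x_{m+1}=A(x_m)$, where $A(x)=\tfrac14\left(3x+\upsilon\right)$ is a contraction of slope $\tfrac34$. A short case check via Theorem \ref{th.formula_for_T^k} shows that $\upsilon=1$ when $n'$ is odd and $\upsilon=2$ when $n'$ is even, so in both cases the unique fixed point of $A$ equals $\upsilon\in\{1,2\}$; the proposition already gives $x_m\to\upsilon$, i.e.\ that the orbit eventually reaches the trivial cycle.

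The key extra step, and the main obstacle, is to upgrade ``eventually reaches'' into ``$n'$ is already equal to $\upsilon$''. Here I would exploit that the $x_m$ are positive integers while $A$ is a strict affine contraction: from $A(x)-\upsilon=\tfrac34(x-\upsilon)$ one gets $x_m-\upsilon=\left(\tfrac34\right)^m(x_0-\upsilon)$, so if $x_0\ne\upsilon$ the sequence $(x_m)$ is strictly monotone and stays on one side of $\upsilon$. But a strictly monotone sequence of integers cannot converge to the finite limit $\upsilon$, contradicting $x_m\to\upsilon$. Hence $x_0=n'=\upsilon\in\{1,2\}$, that is $T^k(n)\in\{1,2\}$. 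The only points needing care are the bookkeeping that the period-$2$ sign pattern genuinely yields the recursion $x_{m+1}=A(x_m)$ for \emph{every} $m\ge0$ rather than merely eventually, which is precisely where periodicity from the start (as opposed to eventual periodicity) is used, together with the separate determination of $\upsilon$ in the odd and even cases.
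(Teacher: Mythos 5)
Your proof is correct, and the forward direction coincides with the paper's (both are immediate from $T(1)=2$, $T(2)=1$). For the substantive direction $(2)\Rightarrow(1)$, however, you take a genuinely different route. The paper uses the preceding proposition purely as a black box: periodicity of the signs yields a minimal $m_0\ge k$ with $T^{m_0}(n)\in\{1,2\}$, and then a one-step preimage argument pins down $m_0=k$ --- if $m_0>k$, the only way to enter $\{1,2\}$ from outside is through $4$, so $T^{m_0-1}(n)=4$, $T^{m_0}(n)=2$, $T^{m_0+1}(n)=1$, and the resulting sign pattern $+1,+1,-1$ is incompatible with period $2$. You instead re-open the proof of that proposition: setting $n'=T^k(n)$, whose sign sequence is $2$-periodic from index $0$, you obtain the recursion $T^{2(m+1)}(n')=A\bigl(T^{2m}(n')\bigr)$ for \emph{all} $m\ge 0$ with one fixed affine contraction $A(x)=\frac{3x+\upsilon}{4}$, you compute $\upsilon=1$ or $\upsilon=2$ according to the parity of $n'$ (handling both cases directly is indeed necessary, since the proposition's trick of shifting so that the first sign is $-1$ would destroy the index-$k$ information), and you then use the exact identity $T^{2m}(n')-\upsilon=\left(\tfrac{3}{4}\right)^m(n'-\upsilon)$ together with integrality to force $n'=\upsilon$ outright. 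What your route buys: it is quantitative, needs no knowledge of the preimage structure of the trivial cycle, and in effect sharpens the period-$2$ case of the proposition to ``periodic from index $0$ implies sitting at the fixed point already at index $0$''. What the paper's route buys: it is shorter, uses only the statement (not the proof) of the proposition, and its arithmetic input (the preimages of $1$ and $2$) is trivial. One shared caveat: like the paper's own treatment of the period-$2$ case, your determination of $\upsilon$ tacitly reads ``period equal to $2$'' as the alternating parity pattern (exactly one odd iterate per period); the constant sign patterns, which are $2$-periodic in the weak sense, must be excluded separately (via the $3^d>2^l$ case of the proposition for the all-odd pattern, and the impossibility of an all-even tail), so this is an ambiguity inherited from the paper rather than a gap specific to your argument.
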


\begin{proof}
  Clearly, the first assertion implies the second one. For the reverse implication, we observe that, by the previous proposition, the periodicity of the sequence $\left( (-1)^{T^m(n)} \right)_{m\ge k}$ implies that the trajectory $\{T^m(n)\}_{m\ge k}$ reaches the trivial cycle, i.e. $T^m_0(n) \in \{1,2\}$ for some $m_0 \ge k$. However, if $m_0>k$, then $T^{m_0-1}(n)=4$, $T^{m_0}(n)=2$, $T^{m_0+1}(n)=1$  and this contradicts the periodicity of $\left( (-1)^{T^m(n)} \right)_{m\ge k}$.
\end{proof}

\begin{corollary}
The following are equivalent:
\begin{enumerate}
  \item The Collatz conjecture is true.
  \item For any positive integer $n$, the sequence $(x_{nk})_{k=1}^\infty$ is eventually periodic with period $2$.
  \item For any positive integer $n$, the sequence $\left( (-1)^{T^k(n)} \right)_{k=1}^\infty$ is eventually periodic with period $2$.
\end{enumerate}
\end{corollary}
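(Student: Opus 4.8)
The plan is to derive this corollary directly from the preceding Proposition, using the elementary fact that the two sequences in (2) and (3) are the same sequence written in different alphabets. First I would record the identity
$$(-1)^{T^k(n)} = (-1)^{x_{n,k+1}}, \qquad k\ge 0,$$
which holds because $x_{n,k+1} = T^k(n) \bmod 2$ by the definition of the parity sequence. Thus $\left((-1)^{T^k(n)}\right)_k$ is obtained from $(x_{nk})_k$ by applying the bijection $0\mapsto 1$, $1\mapsto -1$ of $\{0,1\}$ onto $\{1,-1\}$ and shifting the index by one. Neither a fixed index shift nor a relabelling of the alphabet alters eventual periodicity or the length of the eventual period, so for each fixed $n$ the sequence $(x_{nk})_k$ is eventually periodic with period $2$ precisely when $\left((-1)^{T^k(n)}\right)_k$ is. Quantifying over all $n$ gives the equivalence (2) $\Leftrightarrow$ (3).

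For (1) $\Leftrightarrow$ (3) I would appeal to part (2) of the preceding Proposition, which states that, for fixed $n$, the sequence $\left((-1)^{T^k(n)}\right)_k$ is eventually periodic with period $2$ if and only if the trajectory $\{T^k(n)\}$ reaches the trivial cycle $\mathcal{C}_1=\{1,2\}$. Taking the universal quantifier over $n$, statement (3) becomes the assertion that every Collatz trajectory reaches the trivial cycle. It then remains to identify this with the Collatz conjecture itself: since $T(1)=2$ and $T(2)=1$, a trajectory reaches $\{1,2\}$ if and only if $T^{k_0}(n)=1$ for some $k_0$, which is exactly the statement of the conjecture. This yields (1) $\Leftrightarrow$ (3), and combined with the previous paragraph closes the cycle of equivalences.

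I do not anticipate a genuine obstacle, since all of the substance is carried by the preceding Proposition and the corollary is in effect a repackaging of it. The only point deserving a line of justification is that the eventual period is genuinely $2$ rather than $1$: this is automatic because $T$ has no fixed point in $\mathbb{N}^*$ (the equation $T(n)=n$ has no positive solution), so no trajectory is eventually constant and no associated sign or parity sequence can be eventually of period $1$. With this remark in place, the proof is merely the assembly of the two equivalence chains above.
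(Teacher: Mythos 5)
Your main proof is correct and is essentially the paper's own route: the paper gives no separate argument, presenting the corollary as an immediate consequence of the preceding proposition, and your two chains --- (2) $\Leftrightarrow$ (3) via the relabelling $x_{n,k+1} = T^k(n) \bmod 2$ together with an index shift, and (1) $\Leftrightarrow$ (3) via part (2) of the proposition plus the observation that reaching $\{1,2\}$ is the same as hitting $1$ --- are exactly the intended assembly.

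One caveat: your closing remark is a non-sequitur. The absence of fixed points of $T$ rules out eventually constant \emph{trajectories}, but a sign sequence of eventual period $1$ means eventually constant \emph{parity}, not an eventually constant trajectory; a hypothetical orbit consisting of odd numbers forever would have constant sign $-1$ while growing without bound. Excluding such tails requires the machinery inside the proposition's proof: an all-even tail forces an infinite strictly decreasing sequence of positive integers, and an all-odd tail falls into the case $3^d > 2^l$ (here with $d=l$), which that proof shows leads to the divisibility contradiction $2^{ml} \mid n(3^d-2^l)+\upsilon$ for all $m$. Fortunately the remark is not load-bearing: reading ``period $2$'' as the minimal period, (1) $\Rightarrow$ (3) produces the alternating tail $-1,1,-1,1,\ldots$ directly, and (3) $\Rightarrow$ (1) is precisely the hypothesis of part (2) of the proposition, so your two equivalence chains stand on their own.
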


In view of the previous result, our purpose now is to transfer the Collatz problem from the ``time domain'' to the ``frequency domain''. In order to proceed, we need some notation. We denote by $\left(\omega_{j,k}\right)_{j=1}^{2^k}$ the $2^k$-th roots of $-1$, that is the roots (in the complex plane) of the cyclotomic equation $z^{2^k}=-1.$ Let us remind that these complex numbers are given by the exponential function via the next formula:
$$\omega_{j,k} = \exp\left(\frac{(2j-1)\pi i}{2^k} \right) \quad \text{for } j=1,2,\ldots, 2^k.$$
For technical reasons we will also make use of the notation $\omega_{m,k}$ for any positive integer $m$, where of course $\omega_{m,k}=\omega_{j,k}$ whenever $m=j \mod 2^k$.

We also need the following lemma describing, for every $k$, the generating function of the sequence $\big((-1)^{T^k(n)}\big)_{n=1}^\infty$ (for the proof we refer to \cite{Levent}, Lemma 2.3).

\begin{lemma}
  For every $k=1,2,\ldots$, the series $g_k(x)=\sum_{n=1}^{\infty} (-1)^{T^k(n)} x^n$ converges (absolutely) for any complex number $x$ with $\abs{x}<1$ and its sum is given by the rational function:
$$g_k(x)=\sum_{n=1}^{\infty} (-1)^{T^k(n)} x^n = \frac{P_k(x)}{1+x^{2^k}},$$
where $P_k$ is the polynomial $P_k(x) = (-1)^{T^k(1)} x + \ldots + (-1)^{T^k(2^k)} x^{2^k}.$
\end{lemma}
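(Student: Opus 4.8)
The plan is to exploit the fact that, as a function of $n$, the sequence $a_n := (-1)^{T^k(n)}$ is \emph{anti-periodic} with anti-period $2^k$, and this falls straight out of Proposition \ref{prop.structure-x_kn}. Indeed, applying that proposition with $k+1$ in place of $k$ yields $x_{n+2^{k},k+1} = 1 + x_{n,k+1}$, and since $x_{n,k+1} = T^k(n)\bmod 2$ by definition, this says that $T^k(n+2^k) \equiv 1 + T^k(n) \pmod 2$. Exponentiating gives the key relation
$$a_{n+2^k} = (-1)^{T^k(n+2^k)} = -(-1)^{T^k(n)} = -a_n,$$
and iterating it shows $a_{j2^k + r} = (-1)^j a_r$ for all $j\ge 0$ and $1\le r\le 2^k$.

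With this in hand, I would first dispose of convergence: since $\abs{a_n}=1$ for every $n$, the series $\sum_{n=1}^\infty a_n x^n$ is dominated termwise by the geometric series $\sum_{n=1}^\infty \abs{x}^n$, which converges for $\abs{x}<1$; hence $g_k(x)$ converges absolutely there, and this absolute convergence licenses the regrouping performed below. Next I would split the sum into consecutive blocks of length $2^k$, indexed by $j\ge 0$ (the block) and $r\in\{1,\ldots,2^k\}$ (the position within the block), and substitute the relation $a_{j2^k+r} = (-1)^j a_r$:
\begin{align*}
g_k(x) &= \sum_{j=0}^{\infty}\sum_{r=1}^{2^k} a_{j2^k+r}\, x^{j2^k+r}
 = \sum_{j=0}^{\infty} (-1)^j x^{j2^k} \sum_{r=1}^{2^k} a_r\, x^r\\
 &= \left(\sum_{j=0}^{\infty} (-1)^j \bigl(x^{2^k}\bigr)^j\right) P_k(x)
 = \frac{P_k(x)}{1+x^{2^k}},
\end{align*}
where the inner sum is exactly $P_k(x)=\sum_{r=1}^{2^k}(-1)^{T^k(r)}x^r$, and the outer geometric series sums to $1/(1+x^{2^k})$ because $\abs{x^{2^k}}<1$.

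The only genuinely substantive step is the anti-periodicity relation $a_{n+2^k}=-a_n$, and even that is immediate once Proposition \ref{prop.structure-x_kn} is invoked at level $k+1$; the main thing to be careful about is matching the indexing conventions (the shift of $k$ by one, since $a_n$ records the parity of $T^k(n)$, i.e.\ the entry $x_{n,k+1}$). Beyond that, the remaining work is routine: the domination argument for convergence and the interchange of summation in the block decomposition, both justified by absolute convergence on $\abs{x}<1$. I do not anticipate any real obstacle here, as the structural periodicity of the parity matrix established earlier does all the heavy lifting.
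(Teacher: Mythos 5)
Your proof is correct. One point worth noting up front: the paper does not actually prove this lemma itself --- it defers to \cite{Levent}, Lemma 2.3 --- so your argument is valuable precisely because it is self-contained within the paper's own toolkit. The key step, the anti-periodicity $a_{n+2^k}=-a_n$, is exactly what Proposition \ref{prop.structure-x_kn} (applied at level $k+1$, with the index shift $x_{n,k+1}=T^k(n)\bmod 2$ handled correctly, as you note) delivers, and the rest is absolute convergence plus a geometric series. Your block decomposition $n=j2^k+r$ is also essentially the same device the paper uses in Section 2 for the generating function of $T^k(n)$ itself, where the sum is split over residue classes $\upsilon \bmod 2^k$ and Corollary \ref{cor.Collatz-function} gives $T^k(2^kp+\upsilon)=3^{d_{\upsilon k}}p+T^k(\upsilon)$; taking parities there yields $(-1)^{T^k(2^kp+\upsilon)}=(-1)^p(-1)^{T^k(\upsilon)}$, which is your anti-periodicity in residue-class form, and each inner sum $\sum_p(-1)^p\bigl(x^{2^k}\bigr)^p=1/(1+x^{2^k})$ reproduces your outer geometric series. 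So the two routes (consecutive blocks with Proposition \ref{prop.structure-x_kn}, versus residue classes with Corollary \ref{cor.Collatz-function}) are interchangeable here; yours has the mild advantage of invoking only the parity statement rather than the full formula for $T^k$. No gaps.
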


Our next result describes $(-1)^{T^k(n)}$ as a linear combination of the numbers $\big(\omega_{j,k}^n\big)_{j=1}^{2^k}$. The sequence of coefficients $\widetilde{p_k}=(b_1,b_2,\ldots, b_n)$, which appear in this theorem, can be seen as the Discrete Fourier Transform of the finite sequence $\left((-1)^{T^k(n)}\right)_{n=1}^{2^k}$.

\begin{theorem}\label{th.formula_for_(-1)^T^k(n)}
For every $k=1,2,\ldots$ and every $n=1,2,\ldots$, the next formula holds:
$$(-1)^{T^k(n)} = \overline{b_1}\omega_{1,k}^n + \overline{b_2} \omega_{2,k}^n + \ldots + \overline{b_{2^k}}\omega_{2^k,k}^n,$$
where the coefficients $\widetilde{p_k}=(b_j)_{j=1}^{2^k}$ are given by:
$$b_j = \frac{P_k(\omega_{j,k})}{2^k}$$
and they satisfy the equation:
$$\sum_{j=1}^{2^k} b_j = \sum_{j=1}^{2^k} \frac{P_k(\omega_{j,k})}{2^k}=1.$$
\end{theorem}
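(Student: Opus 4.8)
The plan is to read the claimed identity as a finite discrete Fourier expansion of one period of the sequence $c_n := (-1)^{T^k(n)}$, and to exploit the orthogonality of the powers of the roots $\omega_{j,k}$. The starting observation is that $c_n$ is \emph{anti-periodic} of period $2^k$, i.e. $c_{n+2^k}=-c_n$. This follows directly from Proposition \ref{prop.structure-x_kn}(1): since $x_{ni}=T^{i-1}(n)\bmod 2$ we have $c_n=(-1)^{x_{n,k+1}}$, and applying the relation $x_{n+2^{\ell-1},\ell}=1+x_{n\ell}$ with $\ell=k+1$ gives $x_{n+2^k,k+1}=1+x_{n,k+1}$, which flips the sign. (This is also exactly the information carried by the denominator $1+x^{2^k}$ in the preceding generating-function lemma.) On the other hand, because $\omega_{j,k}^{2^k}=-1$, each function $n\mapsto\omega_{j,k}^n$ is \emph{also} anti-periodic of period $2^k$, and hence so is the right-hand side of the asserted identity. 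Consequently both sides satisfy $f(n+2^k)=-f(n)$, and it suffices to verify the identity for $n\in\{1,2,\ldots,2^k\}$; the general case then follows block by block.

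On this single block I would establish that the $2^k$ vectors $v_j=(\omega_{j,k}^n)_{n=1}^{2^k}$, $j=1,\ldots,2^k$, form an orthogonal basis of $\mathbb{C}^{2^k}$. This is the routine computation
$$\sca{v_j,v_l}=\sum_{n=1}^{2^k}\omega_{j,k}^n\,\overline{\omega_{l,k}^n}=\sum_{n=1}^{2^k}\exp\!\left(\frac{2(j-l)\pi i\,n}{2^k}\right),$$
which equals $2^k$ when $j=l$ and vanishes otherwise, the latter because for $j\neq l$ the summand is a nontrivial $2^k$-th root of unity raised to the power $n$ and the geometric sum over a full period is zero. Expanding the real sequence $(c_n)_{n=1}^{2^k}$ in this basis gives a unique representation $c_n=\sum_{j=1}^{2^k}a_j\omega_{j,k}^n$ with coefficients $a_j=\frac{1}{2^k}\sum_{n=1}^{2^k}c_n\overline{\omega_{j,k}^n}$. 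Since the $c_n$ are real and, by definition, $P_k(\omega_{j,k})=\sum_{n=1}^{2^k}c_n\,\omega_{j,k}^n$, one reads off $a_j=\overline{P_k(\omega_{j,k})}/2^k=\overline{b_j}$. This is precisely the claimed formula on the block, and therefore, by the anti-periodicity argument above, for every $n\ge 1$.

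It remains to prove the normalization $\sum_{j=1}^{2^k}b_j=1$. Interchanging the order of summation,
$$\sum_{j=1}^{2^k}b_j=\frac{1}{2^k}\sum_{n=1}^{2^k}c_n\left(\sum_{j=1}^{2^k}\omega_{j,k}^n\right).$$
The inner sum is again geometric: writing $\omega_{j,k}^n=\exp(-\pi i n/2^k)\exp(2\pi i jn/2^k)$ one finds that $\sum_{j=1}^{2^k}\omega_{j,k}^n$ vanishes for every $n\in\{1,\ldots,2^k-1\}$ and equals $-2^k$ for $n=2^k$, the sign $-1$ coming from $\omega_{j,k}^{2^k}=-1$. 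Hence $\sum_{j}b_j=-c_{2^k}=-(-1)^{T^k(2^k)}$. Finally $T^k(2^k)=1$, since $2^k$ is divided by $2$ exactly $k$ times (equivalently, Corollary \ref{cor.Collatz-function} with $\upsilon=0$, $p=1$ gives $T^k(2^k)=p=1$). Thus $c_{2^k}=-1$ and $\sum_j b_j=1$.

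The computations are all elementary; the only point that requires care is the bookkeeping of signs in the two geometric-sum evaluations. Summing $n$ over $\{1,\ldots,2^k\}$ rather than $\{0,\ldots,2^k-1\}$ places the single nonzero contribution at $n=2^k$ and attaches the factor $\omega_{j,k}^{2^k}=-1$; it is exactly this factor, together with $T^k(2^k)=1$, that turns the naive value into the required $+1$. I expect this sign bookkeeping to be the main (mild) obstacle, the rest being standard Fourier orthogonality.
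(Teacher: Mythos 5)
Your proof is correct, but it takes a genuinely different route from the paper's. The paper argues analytically: it starts from the generating-function lemma $\sum_{n\ge 1}(-1)^{T^k(n)}x^n = P_k(x)/(1+x^{2^k})$, uses $T^k(2^k)=1$ to rewrite this as $-1+f_k(x)/(1+x^{2^k})$ with $f_k(x)=1+P_k(x)+x^{2^k}$, performs a partial-fraction decomposition over the simple roots $\omega_{j,k}$ of $1+x^{2^k}$ (computing the residues $A_j=-P_k(\omega_{j,k})\,\omega_{j,k}/2^k$), re-expands each term $1/(1-\overline{\omega_{j,k}}x)$ as a geometric series, and compares power-series coefficients: the coefficient of $x^n$, $n\ge 1$, yields the expansion formula, while the constant coefficient yields the normalization $\sum_j b_j=1$ in the same stroke. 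You instead work in finite dimensions: anti-periodicity $c_{n+2^k}=-c_n$ (from Proposition~\ref{prop.structure-x_kn}) together with $\omega_{j,k}^{2^k}=-1$ reduces everything to the block $n\in\{1,\dots,2^k\}$, on which the vectors $(\omega_{j,k}^n)_{n=1}^{2^k}$ form an orthogonal basis of $\mathbb{C}^{2^k}$; the coefficients $\overline{b_j}$ then drop out of the inner products (using that the $c_n$ are real), and the normalization is a separate, easy interchange of summation that again hinges on $T^k(2^k)=1$. Your route is more elementary --- it avoids convergence questions and partial fractions entirely, never even invokes the generating-function lemma (only the definition of $P_k$), and makes explicit the DFT interpretation that the paper merely names; indeed it is the same orthogonality $W_kW_k^*=2^kI_{2^k}$ that the paper only exploits later, in the Parseval-type theorem that follows. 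What the paper's route buys is methodological continuity: the generating-function-plus-partial-fractions technique set up here is exactly the one re-used to prove Theorem~\ref{th.roots of -1}, and it delivers both assertions of the theorem from a single coefficient comparison, whereas you establish them in two separate steps. Both proofs rest on the same arithmetic input $T^k(2^k)=1$, and your sign bookkeeping (the single surviving term at $n=2^k$ carrying the factor $\omega_{j,k}^{2^k}=-1$) is handled correctly.
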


\begin{proof}
To simplify the notation, throughout this proof we fix positive integers $k,n$ and we set $\omega_j=\omega_{j,k}$ for any $j=1,2,\ldots, 2^k$.

By the previous lemma, we know that
$$\sum_{n=1}^{\infty} (-1)^{T^k(n)} x^n = \frac{P_k(x)}{1+x^{2^k}} = \frac{(-1)^{T^k(1)} x + \ldots + (-1)^{T^k(2^k)} x^{2^k}}{1+x^{2^k}}.$$
Since $T^k(2^k)=1$, we can write:
$$\frac{P_k(x)}{1+x^{2^k}} = \frac{(-1)^{T^k(1)} x + \ldots - x^{2^k}}{1+x^{2^k}} = \frac{(-1)^{T^k(1)} x + \ldots +(-1)^{T^k(2^{k-1})} x^{2^k-1}+1}{1+x^{2^k}} -1 = \frac{f_k(x)}{1+x^{2^k}}-1, $$
where $f_k(x)$ is the polynomial:
$$f_k(x) = (-1)^{T^k(1)} x + \ldots +(-1)^{T^k(2^{k-1})} x^{2^k-1}+1 = 1+P_k(x)+x^{2^k}.$$
Therefore,
\begin{equation}\label{eq.coefficients-1}
  \sum_{n=1}^{\infty} (-1)^{T^k(n)} x^n = -1+ \frac{f_k(x)}{1+x^{2^k}}.
\end{equation}
Because of the fact that $(\omega_j)_{j=1}^{2^k}$ are the roots of the polynomial in the denominator, the rational function $\frac{f_k(x)}{1+x^{2^k}}$ can be analysed as follows
\begin{align*}
  \frac{f_k(x)}{1+x^{2^k}} =&  \frac{A_1}{x-\omega_1} + \frac{A_2}{x-\omega_2} + \ldots + \frac{A_{2^k}}{x-\omega_{2^k}} \\
  = & \frac{A_1}{-\omega_1(1-\overline{\omega_1}x)} + \frac{A_2}{-\omega_2(1-\overline{\omega_2}x)} + \ldots + \frac{A_{2^k}}{-\omega_{2^k}(1-\overline{\omega_{2^k}}x)}.
\end{align*}
The complex numbers $(A_j)_{j=1}^{2^k}$ are given by
\begin{align*}
  A_j = & \lim_{x\to \omega_j} -\omega_j \cdot (1-\overline{\omega_j} x)\cdot \frac{f_k(x)}{1+x^{2^k}} =  f_k(\omega_j) \cdot \lim_{x\to \omega_j} \frac{-\omega_j  (1-\overline{\omega_j} x)}{1+x^{2^k}}\\
  = & f_k(\omega_j) \cdot \lim_{x\to \omega_j} \frac{-\omega_j \cdot  (-\overline{\omega_j}) }{2^k\cdot x^{2^k-1}} =  f_k(\omega_j) \cdot \frac{1}{2^k\cdot \omega_j^{2^k-1}} =  \frac{-f_k(\omega_j)\cdot \omega_j}{2^k}.
\end{align*}
By the definition of $f_k$, it follows easily that $f_k(\omega_j)=P_k(\omega_j)$. Hence,
$$A_j = \frac{-P_k(\omega_j)\cdot \omega_j}{2^k}.$$
Substituting in equation \eqref{eq.coefficients-1}, we obtain:
\begin{align*}
  \sum_{n=1}^{\infty} (-1)^{T^k(n)} x^n =  & -1+ \sum_{j=1}^{2^k} \frac{A_j}{-\omega_j(1-\overline{\omega_j}x)} =  -1+ \frac{1}{2^k} \sum_{j=1}^{2^k} \frac{P_k(\omega_j)}{1-\overline{\omega_j}x} \\
  = & -1 + \frac{1}{2^k} \sum_{j=1}^{2^k} \sum_{n=0}^{\infty}P_k(\omega_j)(\overline{\omega_j}x)^n\\
  = & -1 + \frac{1}{2^k}\sum_{n=0}^{\infty}\Big(\sum_{j=1}^{2^k}P_k(\omega_j)(\overline{\omega_j})^n\Big) x^n\\
  = & -1 + \frac{1}{2^k} \sum_{j=1}^{2^k}P_k(\omega_j) +\sum_{n=1}^{\infty}\Big(\frac{1}{2^k}\sum_{j=1}^{2^k}P_k(\omega_j)(\overline{\omega_j})^n\Big) x^n.
\end{align*}
By the above equations we can deduce immediately that:
$$\frac{1}{2^k} \sum_{j=1}^{2^k}P_k(\omega_j)=1$$
and
$$(-1)^{T^k(n)} = \frac{1}{2^k}\sum_{j=1}^{2^k}P_k(\omega_j)\overline{\omega_j}^n.$$
Taking the complex conjugate in the last equation completes the proof.
\end{proof}

The previous theorem provides also a property of the coefficients $\widetilde{p_k}=(b_j)_{j=1}^{2^k}$. Another remark is that the $\ell_2$-norm of this finite sequence of complex numbers is equal to $1$ and it is actually a consequence of Parseval's identity. This is described in the next result.

\begin{theorem}
  For any $k=1,2,\ldots $, if $v_k$ the $\ell_2$-norm of the vector:
$$\widetilde{p_k}=\left(b_1, b_2, \ldots, b_n\right)=\frac{1}{2^k} \left(P_k(\omega_{1,k}), \ldots, P_k(\omega_{2^k,k}) \right),$$
is equal to $1$.
\end{theorem}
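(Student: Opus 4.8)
The plan is to recognize the vector $\widetilde{p_k}$ as the Discrete Fourier Transform of the finite, $\pm1$-valued sequence obtained by restricting $n\mapsto(-1)^{T^k(n)}$ to one period, and then to derive the claim $\|\widetilde{p_k}\|_2=1$ as a special case of Parseval's identity. Set $a_n=(-1)^{T^k(n)}$ for $n=1,2,\ldots,2^k$. The crucial structural input is simply that $a_n\in\{-1,1\}$, so that $a_n^2=1$ and hence $\sum_{n=1}^{2^k}a_n^2=2^k$; the entire task is to transfer this trivial ``time side'' computation to the ``frequency side'' $\sum_{j=1}^{2^k}\abs{b_j}^2$.

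First I would expand the squared norm directly from the definition $b_j=\frac{1}{2^k}P_k(\omega_{j,k})=\frac{1}{2^k}\sum_{m=1}^{2^k}a_m\omega_{j,k}^m$. Since the $a_m$ are real, this gives
$$\sum_{j=1}^{2^k}\abs{b_j}^2=\frac{1}{2^{2k}}\sum_{m=1}^{2^k}\sum_{l=1}^{2^k}a_m a_l\sum_{j=1}^{2^k}\omega_{j,k}^m\,\overline{\omega_{j,k}}^{\,l}.$$
The heart of the argument (and the part I expect to require genuine computation) is the orthogonality relation
$$\sum_{j=1}^{2^k}\omega_{j,k}^m\,\overline{\omega_{j,k}}^{\,l}=2^k\,\delta_{ml}\qquad(1\le m,l\le 2^k).$$
To prove it I would substitute $\omega_{j,k}=\exp\!\big((2j-1)\pi i/2^k\big)$, so that the summand becomes $\exp\!\big((2j-1)\pi i (m-l)/2^k\big)$, then factor out the $j$-independent prefactor $\exp(-\pi i(m-l)/2^k)$ and reduce the remaining sum to a geometric series in $\zeta=\exp\!\big(2\pi i(m-l)/2^k\big)$. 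Writing $d=m-l$, which ranges over $\{-(2^k-1),\ldots,2^k-1\}$, one has $\zeta=1$ exactly when $d=0$, i.e. $m=l$, in which case the sum equals $2^k$; for $m\neq l$ the identity $\zeta^{2^k}=\exp(2\pi i d)=1$ together with $\zeta\neq1$ forces the geometric sum to vanish. Here the fact that the $\omega_{j,k}$ are the $2^k$-th roots of $-1$ rather than of $+1$ is harmless, since the shift by $(2j-1)$ contributes only the unimodular factor $\exp(-\pi i d/2^k)$, which does not affect whether the sum is zero.

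Finally, feeding the orthogonality relation back into the double sum collapses it onto its diagonal, and invoking $a_m^2=1$ yields
$$\sum_{j=1}^{2^k}\abs{b_j}^2=\frac{1}{2^{2k}}\sum_{m=1}^{2^k}a_m^2\cdot 2^k=\frac{1}{2^k}\sum_{m=1}^{2^k}a_m^2=\frac{2^k}{2^k}=1,$$
so that $v_k=\|\widetilde{p_k}\|_2=1$, as asserted. I expect the only non-routine ingredient to be the geometric-sum evaluation of the orthogonality relation; once that is in hand, the rest is bookkeeping, and the computation is in effect a self-contained proof of Parseval's identity for the transform defined in Theorem \ref{th.formula_for_(-1)^T^k(n)}.
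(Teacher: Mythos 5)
Your proof is correct and takes essentially the same route as the paper: both expand $\|\widetilde{p_k}\|_{\ell_2}^2$ as a double sum over the signs $(-1)^{T^k(m)}$ and reduce everything to the orthogonality relation $\sum_{j=1}^{2^k}\omega_{j,k}^m\,\overline{\omega_{j,k}}^{\,l}=2^k\delta_{ml}$, which the paper states in matrix form as $W_kW_k^*=2^kI_{2^k}$. The only difference is presentational: you verify the orthogonality explicitly by a geometric-series computation, whereas the paper invokes it as an elementary property of the cyclotomic roots.
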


\begin{proof}
  To simplify the notation, we fix a positive integer $k$ and we set $\epsilon_j = (-1)^{T^k(j)}$ for every $j=1,2,\ldots, 2^k$, so that for any complex number $x$ we have $P_k(x) = \epsilon_1 x + \epsilon_2 x^2 +\ldots + \epsilon_{2^k} x^{2^k}$.
We calculate the $\ell_2$-norm of the vector $\widetilde{p_k}$:
\begin{align*}
  \|\widetilde{p_k}\|_{\ell_2}^2 = & \frac{1}{2^{2k}} \sum_{j=1}^{2^k} \abs{P_k(\omega_{j,k})}^2 = \frac{1}{2^{2k}} \sum_{j=1}^{2^k} P_k(\omega_{j,k}) \cdot \overline{P_k(\omega_j)} \\
  = & \frac{1}{2^{2k}} \sum_{j=1}^{2^k} \left(\epsilon_1 \omega_{j,k} + \epsilon_2 \omega_{j,k}^2 +\ldots + \epsilon_{2^k} \omega_{j,k}^{2^k} \right) \cdot \left(\epsilon_1 \overline{\omega_{j,k}} + \epsilon_2 \overline{\omega_{j,k}^2} +\ldots + \epsilon_{2^k} \overline{\omega_{j,k}^{2^k}} \right) \\
  = & \frac{1}{2^{2k}}
\left[\begin{array}{cccc}
  \epsilon_1 &\epsilon_2 & \ldots & \epsilon_{2^k}
\end{array}\right]
W_k W_k^*
\left[\begin{array}{cccc}
  \epsilon_1 &\epsilon_2 & \ldots & \epsilon_{2^k}
\end{array}\right]^t,
\end{align*}
where $W_k$ is the $2^k\times 2^k$ matrix
$$ W_k= \left[ \omega_{j,k}^m \right]_{j,m=1}^{2^k} = \left[\begin{array}{cccc}
  \omega_1 & \omega_2 & \ldots & \omega_{2^k} \\
  \omega_1^2 & \omega_2^2 & \ldots & \omega_{2^k}^2 \\
  \vdots & \vdots & \vdots & \vdots \\
  \omega_1^{2^k} & \omega_2^{2^k} & \ldots & \omega_{2^k}^{2^k}
\end{array}\right],$$
and $W_k^*$ is the transpose of $W_k$. By the elementary properties of the cyclotomic roots, it is easy to observe that
$$W_k W_k^* = \left[ \sum_{j=1}^{2^k} \omega_j^r \cdot \overline{\omega_j^m} \right]_{r,m=1}^{2^k} =2^k I_{2^k},$$
where $I_{2^k}$ is the $2^k \times 2^k$ identity matrix. Hence, the desired result can now be deduced after some routine calculations.
\end{proof}

Theorem \ref{th.formula_for_(-1)^T^k(n)} describes the sign $(-1)^{T^k(n)}$ as a linear combination of the $n$-th power of the $2^k$ roots of $-1$. However, since having period $2$ means that $(-1)^{T^{k+2}(n)} = (-1)^{T^k(n)}$, it would be useful to be able to connect $(-1)^{T^{k+1}(n)}$ directly with $(-1)^{T^k(n)}$. The main step towards this direction is the theorem that follows.

\begin{theorem}\label{th.roots of -1}
For any $k$ and any $j=1,2, \ldots, 2^k$ the following equation holds:
$$\omega_{j,k}^{T(n)} = \frac{1}{2} \cdot \omega_{j,k+1}^n +\frac{\omega_{j,k+1}}{2} \cdot \omega_{3j-1,k+1}^n +\frac{1}{2} \omega_{2^k+j,k+1}^n - \frac{\omega_{j,k+1}}{2} \cdot \omega_{2^k+3j-1,k+1}^n.$$
\end{theorem}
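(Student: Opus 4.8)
The plan is to reduce the whole identity to a handful of elementary relations among the cyclotomic roots and then split into the two cases (even/odd $n$) that define the map $T$.

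First I would record the relations linking the roots of $-1$ at levels $k$ and $k+1$. Abbreviating $u=\omega_{j,k+1}$ and $w=\omega_{3j-1,k+1}$, a direct computation from the formula $\omega_{m,k+1}=\exp\left(\frac{(2m-1)\pi i}{2^{k+1}}\right)$ yields the four facts
$$\omega_{j,k}=u^2,\qquad w=u^3,\qquad \omega_{2^k+j,k+1}=-u,\qquad \omega_{2^k+3j-1,k+1}=-w.$$
The first relation follows by doubling the argument of the exponential (so $\omega_{j,k+1}^2=\omega_{j,k}$), the second by tripling it, and the last two follow because raising the index by $2^k$ adds $\pi i$ to the exponent and hence flips the sign. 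The only genuinely substantial identity is $w=u^3$: this is exactly where the factor $3$ of the Collatz map becomes encoded in the index shift $j\mapsto 3j-1$.

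Next I substitute these relations into the right-hand side. The two ``shifted'' roots contribute a factor $(-1)^n$ each, so the right-hand side becomes
$$\tfrac12 u^n+\tfrac{u}{2}w^n+\tfrac12(-1)^n u^n-\tfrac{u}{2}(-1)^n w^n=\tfrac12 u^n\bigl(1+(-1)^n\bigr)+\tfrac{u}{2}w^n\bigl(1-(-1)^n\bigr).$$
The purpose of this grouping is that the factors $1\pm(-1)^n$ act as parity selectors, isolating the even and odd contributions.

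Finally I would split according to the parity of $n$. If $n$ is even, the second summand vanishes and the right-hand side equals $u^n=(u^2)^{n/2}=\omega_{j,k}^{n/2}=\omega_{j,k}^{T(n)}$. If $n$ is odd, the first summand vanishes and the right-hand side equals $u\,w^n=u\,u^{3n}=u^{3n+1}=(u^2)^{(3n+1)/2}=\omega_{j,k}^{(3n+1)/2}=\omega_{j,k}^{T(n)}$, where $(3n+1)/2$ is a genuine integer precisely because $n$ is odd. In both cases the right-hand side coincides with $\omega_{j,k}^{T(n)}$, which is the claim. The argument is entirely routine once the cube identity $w=u^3$ is established; verifying that identity and keeping the index arithmetic (taken modulo $2^{k+1}$) straight is the only step demanding real care.
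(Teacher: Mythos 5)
Your proof is correct, but it takes a genuinely different route from the paper. The paper proves the identity by a generating-function argument: it fixes $j$, forms the power series $\sum_{n=1}^{\infty}\omega_{j,k}^{T(n)}x^n$, splits the sum over even and odd $n$ into two geometric series, performs a partial fraction decomposition of the resulting rational function with respect to the level-$(k+1)$ roots $z_j=\omega_{j,k+1}$ (using $z_j^2=\omega_{j,k}$ and $z_{3j-1}^2=\omega_{j,k}^3$), re-expands everything as a power series in $x$, and reads off the claimed formula by comparing coefficients. You instead verify the identity pointwise for each fixed $n$: you isolate the four root identities $\omega_{j,k}=u^2$, $\omega_{3j-1,k+1}=u^3$, $\omega_{2^k+j,k+1}=-u$, $\omega_{2^k+3j-1,k+1}=-u^3$ (with $u=\omega_{j,k+1}$), group the right-hand side using the parity selectors $1\pm(-1)^n$, and check the even and odd cases separately against the definition of $T$. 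The underlying facts are the same in both arguments --- your cube identity $w=u^3$ is a sharpened form of the paper's $z_{3j-1}^2=\omega_j^3$, and your parity split mirrors the paper's splitting of the series into even and odd powers of $x$ --- but your argument is purely algebraic and finite, with no appeal to convergence, partial fractions, or uniqueness of power-series expansions. What the paper's route buys is methodological consistency: the generating-function technique is the workhorse of the whole paper (it also yields Theorem \ref{th.formula_for_(-1)^T^k(n)} and the rationality results of Section \ref{sec.Collatz}), and the partial-fraction computation shows how one would \emph{discover} the coefficients rather than merely verify them. What your route buys is brevity and transparency: it exhibits the theorem as a finite identity among roots of unity with no analytic content, and it localizes the appearance of the Collatz factor $3$ in the single relation $\omega_{3j-1,k+1}=\omega_{j,k+1}^3$. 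Your closing caution about index arithmetic is also well placed: since $3j-1$ may exceed $2^{k+1}$, one needs the paper's convention $\omega_{m,k+1}=\omega_{j',k+1}$ for $m\equiv j'\bmod 2^{k+1}$, which is harmless here because the exponential formula defining the roots is itself $2^{k+1}$-periodic in the index.
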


\begin{proof}
Once again, we wish to simplify the notation. For this reason, we fix a positive integer $k$ and throughout this proof we set $\omega_j=\omega_{j,k}$ for any $j=1,2,\ldots,2^k$ and $z_j=\omega_{j,k+1}$ for any $j=1,2,\ldots, 2^{k+1}$. Thus, the equation we want to prove takes the form:
$$\omega_j^{T(n)} = \frac{1}{2} \cdot z_j^n +\frac{z_j}{2} \cdot z_{3j-1}^n +\frac{1}{2} z_{2^k+j}^n - \frac{z_j}{2} \cdot z_{2^k+3j-1}^n.$$

Following one of our main techniques, for an arbitrary (but fixed) $j\in\{1,2,\ldots,2^k\}$, we consider the powerseries $\sum_{n=1}^{\infty} \omega_j^{T(n)} x^n$, which clearly converges for any complex number $x$ with $\abs{x}<1$. The sum of this series is written as follows:
\begin{align*}
  \sum_{n=1}^{\infty} \omega_j^{T(n)} x^n = & \sum_{m=1}^{\infty} \omega_j^{T(2m)} x^{2m} + \sum_{m=0}^{\infty} \omega_j^{T(2m+1)} x^{2m+1}\\
  = & \sum_{m=1}^{\infty} \omega_j^{m} x^{2m} + \sum_{m=0}^{\infty} \omega_j^{3m+2} x^{2m+1} \\
  = & \sum_{m=1}^{\infty} (\omega_j \cdot x^2)^m + \omega_j^2 \cdot x \cdot \sum_{m=0}^{\infty} (\omega_j^3 \cdot x^2)^m \\
  = & \frac{\omega_j \cdot x^2}{1-\omega_j \cdot x^2} + \frac{\omega_j^2 \cdot x}{1-\omega_j^3 \cdot x^2}\\
  = & \frac{-1+\omega_j \cdot x^2}{1-\omega_j \cdot x^2} + \frac{1}{1-\omega_j \cdot x^2} + \frac{\omega_j^2 \cdot x}{1-\omega_j^3 \cdot x^2}\\
  = & -1 + \frac{1}{1-\omega_j \cdot x^2} + \frac{\omega_j^2 \cdot x}{1-\omega_j^3 \cdot x^2}\\
\end{align*}
Since $z_j^2=\omega_j$, using partial fraction decomposition, we may write
$$\frac{1}{1-\omega_j \cdot x^2} = \frac{1}{(1-z_j \cdot x)(1+z_j \cdot x)} = \frac{1}{2(1-z_j x)}+\frac{1}{2(1+z_j x)}.$$
Similarly, $z_{3j-1}^2 = \omega_j^3$, and a simple partial fraction decomposition shows that:
$$\frac{\omega_j^2 \cdot x}{1-\omega_j^3 \cdot x^2}= \frac{\omega_j^2 \cdot x}{(1-z_{3j-1}x)(1+z_{3j-1}x)} = \frac{z_1^{2j-1}}{2(1-z_{3j-1}x)}+\frac{z_1^{2j-1}}{2(1+z_{3j-1}x)}.$$
Consequently, the sum of the powerseries becomes
\begin{align*}
  \sum_{n=1}^{\infty} \omega_j^{T(n)} x^n = & -1 + \frac{1}{2(1-z_j x)} + \frac{1}{2(1+z_j x)} + \frac{z_1^{2j-1}}{2(1-z_{3j-1}x)} +\frac{z_1^{2j-1}}{2(1+z_{3j-1}x)} \\
  = & -1 +\frac{1}{2} \sum_{n=0}^{\infty} (z_j\cdot x)^n + \frac{1}{2} \sum_{n=0}^{\infty} (-z_j\cdot x)^n + \frac{z_1^{2j-1}}{2} \sum_{n=0}^{\infty} (z_{3j-1}\cdot x)^n\\
    & - \frac{z_1^{2j-1}}{2} \sum_{n=0}^{\infty} (-z_{3j-1}\cdot x)^n \\
  = & \sum_{n=1}^{\infty} \left[\frac{1}{2} \cdot z_j^n + \frac{1}{2} \cdot (-z_j)^n + \frac{z_1^{2j-1}}{2} \cdot z_{3j-1}^n -  \frac{z_1^{2j-1}}{2} \cdot (-z_{3j-1})^n \right] \cdot x^n.
\end{align*}
The above equation implies that:
$$ \omega_j^{T(n)} =\frac{1}{2} \cdot z_j^n + \frac{1}{2} \cdot (-z_j)^n + \frac{z_1^{2j-1}}{2} \cdot z_{3j-1}^n - \frac{z_1^{2j-1}}{2} \cdot (-z_{3j-1})^n.$$
By the exponential form of the roots $(z_j)_{j=1}^{2^{k+1}}$ of $-1$, it follows immediately that:
$$-z_j=z_{2^k+j}, \quad -z_{3j-1} = z_{2^k+3j-1} \quad \text{and} \quad z_i^{2j-1}=z_j.$$
Hence,
$$\omega_j^{T(n)} = \frac{1}{2} \cdot z_j^n +\frac{z_j}{2} \cdot z_{3j-1}^n +\frac{1}{2} z_{2^k+j}^n - \frac{z_j}{2} \cdot z_{2^k+3j-1}^n,$$
and the result has been proved.
\end{proof}

\begin{example}
  For $k=0$ we have $\omega_{1,0}=-1$ and $\omega_{1,1}=i$, $\omega_{2,1}=-i$. The formula of Theorem \ref{th.roots of -1} gives the equation
$$(-1)^{T(n)} = \frac{1-i}{2} i^n + \frac{1+i}{2}(-i)^n,$$
which is in complete agrement with Theorem \ref{th.formula_for_(-1)^T^k(n)}.
For $k=1$, we obtain:
$$i^{T(n)} = \frac{1}{2} \omega_1^n+\frac{\omega_1}{2}\omega_2^n+\frac{1}{2} \omega_3^n-\frac{\omega_1}{2}\omega_4^n$$
and
$$(-i)^{T(n)} = \frac{\omega_2}{2} \omega_1^n +\frac{1}{2} \omega_2^n-\frac{\omega_2}{2}\omega_3^n+\frac{1}{2}\omega_4^n.$$
Hence, we have:
$$
  (-1)^{T^2(n)} = (-1)^{T(T(n))}=  \frac{1-i}{2} i^{T(n)} + \frac{1+i}{2}(-i)^{T(n)}.
$$
\end{example}

\section{An isometry on a Hilbert space}\label{sec.ell-2-oper}

Theorem \ref{th.roots of -1} describes the quantity $\omega_{j,k}^{T(n)}$, for any $2^k$-th root $\omega_{j,k}$ of $-1$, as a linear combination of $(\omega_{j, k+1}^n)_{j=1}^{2^{k+1}}$, where the coefficients are independent from $n$. Motivated by this remark, we now consider the functions $\Omega_{j,k} \colon \mathbb{N} \to \mathbb{C}$, for any $j\in\mathbb{N}^*$ and $k\in\mathbb{N}$, such that
$$\Omega_{j,k} (n) = \omega_{j,k}^n \quad \forall n\in\mathbb{N}.$$
Recall that $\Omega_{m,k}=\Omega_{j,k}$ for any $m = j \mod 2^k$. Furthermore, let $V_k\subset \mathbb{C}^\mathbb{N}$ be the vector space generated by $(\Omega_{j, k}^n)_{j=1}^{2^{k}}$, i.e.
$$V_k = \left\langle\Omega_{1, k}, \Omega_{2, k}, \ldots, \Omega_{2^k, k} \right\rangle= \Big\{ \sum_{j=1}^{2^k} c_j \cdot \Omega_{j,k} \mid c_j\in\mathbb{C}, \, j=1,2,\ldots,2^k\Big\}.$$
This space is equipped with inner product, i.e.
$$\sca{\sum_{j=1}^{2^k} c_j \cdot \Omega_{j,k}, \sum_{j=1}^{2^k} d_j \cdot \Omega_{j,k}} = \sum_{j=1}^{2^k} c_j \overline{d_j},$$
for any $c_j,d_j\in\mathbb{C}$, $j=1,2,\ldots,2^k$, and the corresponding $\ell_2$-norm:
$$\Big\|\sum_{j=1}^{2^k} c_j \cdot \Omega_{j,k} \Big\| = \Big(\sum_{j=1}^{2^k} \abs{c_j}^2 \Big)^{1/2},$$
so that $V_k$ is isometrically isomorphic to $\ell_2^{2^k}$ via the isometry mapping $\Omega_{j,k}$ to $e_j$ for $j=1,2,\ldots,2^k$.

For every natural number $k$, the $3x+1$-function $T\colon \mathbb{N}\to \mathbb{N}$ defines (through Theorem \ref{th.roots of -1}) a linear operator
$$A_k \colon V_k \to V_{k+1}$$
whose values on the vector basis of $V_k$ are given by the formula:
\begin{equation}\label{eq.formula_of_A_k}
  A_k(\Omega_{j, k}) = \frac{1}{2} \cdot \Omega_{j,k+1} +\frac{\omega_{j,k+1}}{2} \cdot \Omega_{3j-1,k+1} +\frac{1}{2} \Omega_{2^k+j,k+1} - \frac{\omega_{j,k+1}}{2} \cdot \Omega_{2^k+3j-1,k+1}.
\end{equation}
As usual, the linear operator $A_k$ can be described with a $2^k\times 2^{k+1}$ matrix, which we denote by $M_k$. The above formula gives the $i$-th line of the matrix $M_k$. Therefore, for every complex numbers $(c_j)_{j=1}^{2^k}$, we have:
$$A_k(c_1\Omega_{1,k} + \ldots + c_{2^k} \Omega_{2^k,k}) = M_k^t \cdot
\left[\begin{array}{c}
        c_1 \\
        c_2 \\
        \vdots \\
        c_{2^k}
      \end{array}
\right].$$

\begin{example}\label{example-matrices}
Below we write down the matrix $M_k$ for the first values of $k$:
$$M_1= \frac{1}{2} \left[ \begin{array}{cc}
         1-i & 1+i
       \end{array} \right]
\quad \quad
M_2 = \frac{1}{2} \left[\begin{array}{cccc}
          1 & \omega_{1,2} & 1 & -\omega_{1,2} \\
          \omega_{2,2} & 1 & -\omega_{2,2} & 1
        \end{array} \right]$$
$$M_3= \frac{1}{2} \left[\begin{array}{cccccccc}
               1 & \omega_{1,3} & 0 & 0 & 1 & -\omega_{1,3} & 0 & 0 \\
               -\omega_{2,3} & 1 & 0 & 0 & \omega_{2,3} & 1 & 0 & 0 \\
               0 & 0 & 1 & -\omega_{3,3} & 0 & 0 & 1 & \omega_{3,3} \\
               0 & 0 & \omega_{4,3} & 1 & 0 & 0 & -\omega_{4,3} & 1
             \end{array} \right]$$
\end{example}

\begin{theorem}
For any positive integer $k$ the following equations hold:
  \begin{enumerate}
    \item $(-1)^{T^k(n)}= A_{k-1}\left( (-1)^{T^{k-1}(\cdot)} \right)(n)$ for any $n\in\mathbb{N}^*$;
    \item $(-1)^{T^k(n)} = A_{k-1}A_{k-2} \ldots A_1 A_0 \left((-1)^{T^0(\cdot)} \right)(n)$ for any $n\in\mathbb{N}^*$.
  \end{enumerate}
\end{theorem}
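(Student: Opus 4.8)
The plan is to recognize that the operator $A_k$, once restricted to $V_k$, is nothing more than the composition (Koopman) operator $v\mapsto v\circ T$, and that for every $k$ the function $n\mapsto(-1)^{T^k(n)}$ already lives inside $V_k$. Once these two observations are in place, both assertions are almost immediate.

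First I would establish the identity $A_k(\Omega_{j,k})=\Omega_{j,k}\circ T$ for every basis vector $\Omega_{j,k}$. This is obtained by directly comparing the defining formula \eqref{eq.formula_of_A_k} of $A_k$ with the statement of Theorem \ref{th.roots of -1}: evaluating $A_k(\Omega_{j,k})$ at a point $n$ reproduces exactly the right-hand side of Theorem \ref{th.roots of -1}, which equals $\omega_{j,k}^{T(n)}=\Omega_{j,k}(T(n))$. By linearity of $A_k$, this extends to $A_k(v)=v\circ T$ for every $v\in V_k$.

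Next I would invoke Theorem \ref{th.formula_for_(-1)^T^k(n)}, applied with exponent $k-1$, to write the function $s_{k-1}$ defined by $s_{k-1}(n)=(-1)^{T^{k-1}(n)}$ as $s_{k-1}=\sum_{j=1}^{2^{k-1}}\overline{b_j}\,\Omega_{j,k-1}\in V_{k-1}$. This membership is precisely what makes the application of $A_{k-1}$ to $s_{k-1}$ legitimate. Combining it with the composition identity then gives, for every $n$,
$$A_{k-1}(s_{k-1})(n)=(s_{k-1}\circ T)(n)=(-1)^{T^{k-1}(T(n))}=(-1)^{T^k(n)},$$
which is exactly assertion (1).

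Finally, assertion (2) would follow by iterating assertion (1). Since $s_i\in V_i$ for every $i$ (again by Theorem \ref{th.formula_for_(-1)^T^k(n)}), the equality $s_i=A_{i-1}(s_{i-1})$ of part (1) is an identity of functions on $\mathbb{N}$, hence of vectors in $V_i$, and a straightforward induction on $k$ yields
$$s_k=A_{k-1}(s_{k-1})=A_{k-1}A_{k-2}(s_{k-2})=\cdots=A_{k-1}A_{k-2}\cdots A_1 A_0(s_0),$$
with $s_0(n)=(-1)^{T^0(n)}=(-1)^n$, so that $s_0=\Omega_{1,0}\in V_0$. I do not expect a genuine obstacle in this argument; the only point requiring care is to confirm at each stage that the argument fed into the next operator lies in the appropriate space $V_i$, so that the composition $A_{k-1}\cdots A_0$ is well defined and the pointwise identities are genuine equalities of vectors. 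This domain condition is supplied uniformly by Theorem \ref{th.formula_for_(-1)^T^k(n)}, which is therefore the true crux on which the whole statement rests.
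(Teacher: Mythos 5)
Your proof is correct and takes essentially the same route as the paper's: both arguments combine the identity $A_{k}(\Omega_{j,k})(n)=\omega_{j,k}^{T(n)}$ (coming from Theorem \ref{th.roots of -1} and the definition of $A_k$) with the expansion $(-1)^{T^{k}(n)}=\sum_{j=1}^{2^{k}}\overline{b_j}\,\omega_{j,k}^{n}$ of Theorem \ref{th.formula_for_(-1)^T^k(n)}, then conclude by linearity of $A_k$ and obtain assertion (2) by iterating assertion (1). Your formulation of $A_k$ as the composition operator $v\mapsto v\circ T$ on all of $V_k$ is merely a conceptual repackaging of the paper's inline linearity step, not a different argument.
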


\begin{proof}
  The second assertion is an immediate consequence of the first one. The first assertion is proved by induction on $k$. First, we observe that, by the definition of $A_k$, we have
$$A_k(\Omega_{j,k})(n) = \omega_{j,k}^{T(n)}.$$
Furthermore, by Theorem \ref{th.formula_for_(-1)^T^k(n)}, we know that:
$$(-1)^{T^k(n)} = \sum_{j=1}^{2^k} \overline{b_j} \omega_{j,k}^n .$$
Replacing $n$ with $T(n)$ in the above equation implies
$$(-1)^{T^{k+1}(n)} = \sum_{j=1}^{2^k} \overline{b_j} \omega_{j,k}^{T(n)} = \sum_{j=1}^{2^k} \overline{b_j} A_k\left(\Omega_{j,k}\right)(n) = A_k\left( \sum_{j=1}^{2^k} \overline{b_j} \Omega_{j,k}\right)(n) = A_k\left( (-1)^{T^k(\cdot)} \right)(n).$$
\end{proof}

\subsection*{Structure and properties of the operator $A_k$ and the matrix $M_k$}
In this subsection, we delve deeper into the properties of the operators $(A_k)_{k=0}^\infty$ and the matrices $(M_k)_{k=0}^\infty$ defined above. The aforementioned examples are quite enlightening.

It follows, by equation \eqref{eq.formula_of_A_k}, that for each $j=1,2,\ldots,2^k$ the $j$-th line of the matrix $M_k$ has exactly $4$ non zero entries. Two of them are located at the $j$ and $2^k+j$ columns, i.e. at the $(j,j)$ and $(j,2^k+j)$ entries of $M_k$, and they are equal to $\frac{1}{2}$. Let us now divide $M_k$ into two $2^k\times 2^k$ submatrices: $M_k^L$ containing the first $2^k$ columns of $M_k$ and $M_k^R$ containing the last $2^k$ columns of $M_k$. Then the diagonal entries of $M_k^L$ and $M_k^R$ are all equal to $\frac{1}{2}$. Thus, we may write $M_k^L=\frac{1}{2} I-L_k$ and $M_k^R = \frac{1}{2} I-R_k$, where $I$ is the $2^k\times 2^k$ identity matrix and $L_k$, $R_k$ are $2^k\times 2^k$ matrices with diagonal entries equal to $0$.

By equation \eqref{eq.formula_of_A_k}, we also observe that the third non zero element of the $j$-th line of $M_k$ lies at the $3j-1$ column. However, $3j-1$ may be greater that $2^{k+1}$. Therefore, it is more accurate to say that this element lies at the $(3j-1)\mod 2^{k+1}$ column where, however $0 \mod 2^{k+1}$ refers to the $2^{k+1}$-th column of the matrix. Similarly, the fourth non zero element of the $j$-th line lies at the $(2^k+3j-1)\mod 2^{k+1}$ column. These elements are opposite to each other: one is equal to $\frac{\omega_{j,k+1}}{2}$ and the other one is $-\frac{\omega_{j,k+1}}{2}$. We also observe that the ``distance" between their positions in the $j$-th line of $M_k$ is always equal to $2^k$. This implies that one of these elements lies at the matrix $M_k^L$, and hence at the matrix $L_k$, and the other one lies at the corresponding position of the matrix $R_k$. Consequently, we have $R_k=-L_k$.

Therefore, we have the next proposition.

\begin{proposition}\label{prop.properties_of_M}
 For every positive integer $k$, the matrix $M_k$ has the following properties.
\begin{enumerate}
  \item $M_k$ consists of two submatrices $M_k^L$ and $M_k^R$ whose diagonal entries are all equal to $\frac{1}{2}$.
  \item If we write $M_k^L=\frac{1}{2}I-L_k$ and $M_k^R=\frac{1}{2}I-R_k$, then $R_k=-L_k$.
  \item For each $j=1,2,\ldots, 2^k$, the $j$-line of the matrix $L_k$ (and of $R_k$) contains exactly one non zero element, which lies at the $(3j-1)\mod 2^k$ column.
  \item Each column of the matrix $L_k$ (and of $R_k$) has exactly one non zero element.
\end{enumerate}
\end{proposition}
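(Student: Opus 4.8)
The plan is to read everything off directly from the defining formula \eqref{eq.formula_of_A_k} and then track the four nonzero entries of each row through the reduction modulo $2^{k+1}$ and through the splitting of $M_k$ into its left and right halves. Fix $j\in\{1,\dots,2^k\}$. By \eqref{eq.formula_of_A_k} the $j$-th row of $M_k$ has four nonzero entries: the value $\tfrac12$ in columns $j$ and $2^k+j$, the value $\tfrac{\omega_{j,k+1}}{2}$ in column $(3j-1)\bmod 2^{k+1}$, and the value $-\tfrac{\omega_{j,k+1}}{2}$ in column $(2^k+3j-1)\bmod 2^{k+1}$ (with the convention that $0\bmod 2^{k+1}$ denotes column $2^{k+1}$). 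Since $M_k^L$ collects columns $1,\dots,2^k$ and $M_k^R$ collects columns $2^k+1,\dots,2^{k+1}$ (relabelled $1,\dots,2^k$), the entry in column $j$ occupies the $(j,j)$ position of $M_k^L$ and the entry in column $2^k+j$ occupies the $(j,j)$ position of $M_k^R$. Both equal $\tfrac12$, which proves assertion (1) and justifies writing $M_k^L=\tfrac12 I-L_k$ and $M_k^R=\tfrac12 I-R_k$ with $L_k,R_k$ having zero diagonal.

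For assertions (2) and (3) I would exploit the key observation that the two ``$\pm\omega_{j,k+1}/2$'' columns, namely $3j-1$ and $2^k+3j-1$, differ by exactly $2^k$ modulo $2^{k+1}$; hence precisely one of them lies in $\{1,\dots,2^k\}$ and the other in $\{2^k+1,\dots,2^{k+1}\}$. Setting $c=(3j-1)\bmod 2^k$ (again reading $0$ as $2^k$), a short case distinction according to which of the two raw columns falls in the left block shows that the off-diagonal contribution landing in $M_k^L$ and the one landing in $M_k^R$ occur at the \emph{same} relabelled column $c$ and are negatives of one another. Because passing from $M_k^L,M_k^R$ to $L_k,R_k$ merely negates off-diagonal entries, this yields $(L_k)_{j,c}=-(R_k)_{j,c}$, and since these are the only off-diagonal entries in row $j$, we conclude $R_k=-L_k$ (assertion (2)) and that the unique nonzero off-diagonal entry in the $j$-th row of $L_k$ (and of $R_k$) sits in column $c=(3j-1)\bmod 2^k$ (assertion (3)). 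One must also verify $c\ne j$, so that this entry is genuinely off-diagonal: $3j-1\equiv j\pmod{2^k}$ would force $2j\equiv 1\pmod{2^k}$, which is impossible for $k\ge 1$ since the left side is even and the right side odd.

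Finally, assertion (4) follows from a counting argument on the map $j\mapsto(3j-1)\bmod 2^k$. Since $\gcd(3,2^k)=1$, the residue $3$ is a unit in $\mathbb{Z}/2^k\mathbb{Z}$, so the affine map $j\mapsto 3j-1$ is a bijection of $\mathbb{Z}/2^k\mathbb{Z}$. Consequently the column index $c=(3j-1)\bmod 2^k$ runs bijectively over $\{1,\dots,2^k\}$ as $j$ does, so each column of $L_k$ (and of $R_k$) receives exactly one of the off-diagonal entries located in the previous step; this is assertion (4).

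I expect the only real friction to be the bookkeeping in the case distinction of the second paragraph: one has to keep straight, for each sub-case, which raw column lands in which half, how it is relabelled inside $M_k^R$, and that the accompanying coefficients $\pm\omega_{j,k+1}/2$ combine with the sign flip $L_k=\tfrac12 I-M_k^L$ to produce exactly $R_k=-L_k$. Everything else---assertions (1) and (4)---is immediate once the four entries have been identified and the invertibility of $3$ modulo $2^k$ has been noted.
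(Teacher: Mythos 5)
Your proof is correct and follows essentially the same route as the paper: reading the four nonzero entries of each row off equation \eqref{eq.formula_of_A_k}, observing that the two $\pm\omega_{j,k+1}/2$ entries sit at distance $2^k$ and hence land in opposite blocks at the same relabelled column $(3j-1)\bmod 2^k$, and using the invertibility of $3$ modulo $2^k$ for the column count. Your explicit verification that $(3j-1)\bmod 2^k\ne j$ (so the entry is genuinely off-diagonal) is a small point of rigor that the paper leaves implicit, but otherwise the two arguments coincide.
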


\begin{proof}
  The first two assertions follow immediately by the discussion preceding the proposition. For the third assertion, we know that the non zero element of the $j$-line of $L_k$ lies either at the $(3j-1) \mod 2^{k+1}$ or at the $(2^k+3j-1) \mod2^{k+1}$ column according to which of the two numbers is between $1$ and $2^k$. However, since $(3j-1) \mod 2^{k+1}=(3j-1) \mod 2^k$ and $(2^k+3j-1) \mod2^{k+1} = (3j-1) \mod 2^k$, it can be deduced that the non zero element of the $j$-line is located at the $(3j-1) \mod 2^k$ column.

  Finally, for the fourth assertion, it suffices to observe that for any $1\le j <\lambda \le 2^k$ we have $3\lambda-1 \ne 3j-1 \mod 2^k$ and, hence, each column of $L_k$ contains exactly one non zero element.
\end{proof}

\begin{remark}
  It is perhaps worth noting that the number $3$, which actually determines the position of the non zero entries in $M_k$ not lying at the diagonal of $M_k^L$ or $M_k^R$, comes directly form the Collatz function. Indeed, in the proof of Theorem \ref{th.roots of -1} we split out the sum $\sum_{n=1}^{\infty} \omega_j^{T(n)} x^n$ into two parts: one containing the even power of $x$ and the other containing the odd powers of $x$. The proof of the aforementioned theorem shows that the sum of the even powers of $x$ produces the diagonal elements of $M_k^L$ and $M_k^R$. The rest non zero elements are produced by the sum of the even powers of $x$. Consequently, the number $3$ from the Collatz function make his presence in the matrix $M_k$ by showing the position of the non zero entries not belonging to the diagonal of $M_k^L$ or $M_k^R$.
\end{remark}

Let now $L_{k,P}$ be the matrix which results from $L_k$ by replacing the non zero elements of the latter with $1$. By the previous proposition, it follows that $L_{k,P}$ is a permutation matrix.

\begin{proposition}\label{prop.permutation}
  For any $k\ge 2$, the permutation $\sigma_k$ of the symmetric group $S_{2^k}$ corresponding to $L_{k,P}$ is the product of two disjoint cycles, each one having length equal to $2^{k-1}$.
\end{proposition}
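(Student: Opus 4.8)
The plan is to realize $\sigma_k$ explicitly as an affine map on $\mathbb{Z}/2^k\mathbb{Z}$ and then read off its cycle structure from the $2$-adic valuation of $3^m-1$. By Proposition \ref{prop.properties_of_M}(3), the permutation matrix $L_{k,P}$ has its single nonzero entry of row $j$ in column $(3j-1)\bmod 2^k$. Identifying the index set $\{1,2,\ldots,2^k\}$ with $\mathbb{Z}/2^k\mathbb{Z}$ (so that $2^k$ plays the role of $0$), the associated permutation is simply
$$\sigma_k(j) \equiv 3j-1 \pmod{2^k}.$$
First I would compute the iterates of this affine map. Summing the geometric series $\sum_{i=0}^{m-1}3^i=\tfrac{3^m-1}{2}$ gives, for every $m\ge 1$,
$$\sigma_k^m(j) \equiv 3^m j - \frac{3^m-1}{2} \pmod{2^k}.$$

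The crucial observation is that the length of the cycle through $j$, i.e. the least $m\ge 1$ with $\sigma_k^m(j)=j$, does not depend on $j$. Indeed, $\sigma_k^m(j)=j$ is equivalent to $(3^m-1)j\equiv\frac{3^m-1}{2}\pmod{2^k}$, and multiplying the underlying divisibility by $2$ turns this into
$$(3^m-1)(2j-1)\equiv 0 \pmod{2^{k+1}}.$$
Since $2j-1$ is odd, it is invertible modulo $2^{k+1}$, so this condition is equivalent to $3^m\equiv 1\pmod{2^{k+1}}$, which is free of $j$. Hence every element of $\mathbb{Z}/2^k\mathbb{Z}$ lies on a cycle of one and the same length $d=\operatorname{ord}_{2^{k+1}}(3)$, the multiplicative order of $3$ modulo $2^{k+1}$.

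It remains to show $d=2^{k-1}$ for $k\ge 2$. For this I would use the factorization $3^{2^t}-1=(3-1)\prod_{i=0}^{t-1}(3^{2^i}+1)$ together with the facts that $v_2(3-1)=1$, $v_2(3+1)=2$, and $v_2(3^{2^i}+1)=1$ for $i\ge 1$ (because $3^{2^i}=(3^2)^{2^{i-1}}\equiv 1\pmod 8$, so $3^{2^i}+1\equiv 2\pmod 8$). This yields $v_2(3^{2^t}-1)=t+2$ for all $t\ge 1$. Taking $s=k+1\ge 3$, we obtain $3^{2^{s-2}}\equiv 1\pmod{2^s}$ but $3^{2^{s-3}}\not\equiv 1\pmod{2^s}$; since the order of $3$ divides $|(\mathbb{Z}/2^s\mathbb{Z})^*|=2^{s-1}$ and is therefore a power of $2$ that is at most $2^{s-2}$ and larger than $2^{s-3}$, it must equal $2^{s-2}=2^{k-1}$.

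Finally I would assemble the pieces: as all cycles of $\sigma_k$ have the common length $2^{k-1}$ and $\sigma_k$ permutes $2^k$ symbols, the number of cycles is $2^k/2^{k-1}=2$, giving exactly two disjoint cycles each of length $2^{k-1}$, as claimed. The only mildly delicate step is the order computation via the $2$-adic valuation; everything else is a direct unwinding of the affine map, and the key conceptual point—that the period is independent of the starting index—is precisely what forces the cycle structure to be uniform.
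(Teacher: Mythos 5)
Your proof is correct, but it takes a genuinely different route from the paper's. The paper works with two explicitly chosen cycles: it iterates $\sigma_k$ starting from $1$ and from $3$, obtaining the closed forms $a_j=\tfrac{3^{j-1}+1}{2}\bmod 2^k$ and $b_j=\tfrac{5\cdot 3^{j-1}+1}{2}\bmod 2^k$, shows via the order of $3$ modulo $2^{k+1}$ that each of these cycles has length $2^{k-1}$, and then needs a separate arithmetic lemma (that $3^j-5$ is never divisible by $8$) to prove the two cycles are disjoint, after which the count $2\cdot 2^{k-1}=2^k$ finishes the argument. You instead prove a uniformity statement: by reducing $\sigma_k^m(j)=j$ to $(3^m-1)(2j-1)\equiv 0 \pmod{2^{k+1}}$ and invoking the invertibility of the odd factor $2j-1$, you show that \emph{every} point lies on a cycle of the same length $\operatorname{ord}_{2^{k+1}}(3)$, so the cycle count $2^k/2^{k-1}=2$ and the disjointness come for free, with no analogue of the paper's $3^j-5$ lemma needed. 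Both arguments hinge on the same number-theoretic fact, $\operatorname{ord}_{2^{k+1}}(3)=2^{k-1}$, but the paper cites this as a lemma proved "by induction and elementary number theory" without details, whereas you actually prove it via the $2$-adic valuation $v_2\bigl(3^{2^t}-1\bigr)=t+2$; just note that this valuation formula requires $t\ge 1$, so for the boundary case $k=2$ (where $2^{s-3}=1$) you should appeal directly to $v_2(3-1)=1$, which you do state. What the paper's approach buys is concrete information — the two cycles are identified as the orbits of $1$ and of $3$ — while yours buys a shorter, self-contained, and more conceptual proof in which the uniform cycle length forces the structure.
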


In order to prove the proposition, we need the next lemma, which can be proved by induction and elementary number theory.

\begin{lemma}
  The following hold:
  \begin{enumerate}
    \item Let $k\ge 2$ be any integer. Then, $2^{k-1}$ is the least positive integer $j$ satisfying the property $3^j = 1\mod 2^{k+1}$.
    \item For any positive integer $j$, we have that $3^j-5$ is not a multiple of $8$.
  \end{enumerate}
\end{lemma}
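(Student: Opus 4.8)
The two parts are elementary statements about powers of $3$ modulo powers of $2$, and my plan is to treat them separately: I dispose of the second quickly and devote the real work to the first.

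For the second assertion I would simply track $3^j$ modulo $8$. Since $3^2 = 9 \equiv 1 \pmod 8$, the powers of $3$ are periodic modulo $8$ with period $2$, taking the value $3$ when $j$ is odd and $1$ when $j$ is even. In either case $3^j \not\equiv 5 \pmod 8$, so $3^j - 5$ is never divisible by $8$.

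The first assertion is the statement that the multiplicative order of $3$ in the group $(\mathbb{Z}/2^{k+1}\mathbb{Z})^\ast$ equals $2^{k-1}$, and here the crux is to pin down the exact $2$-adic valuation of $3^{2^t}-1$. I would prove by induction on $t\ge 1$ that
$$3^{2^t} = 1 + 2^{t+2}\,u_t \quad\text{with } u_t \text{ odd}.$$
The base case $t=1$ is $3^2 = 9 = 1 + 2^3$. For the inductive step I would square the relation: writing $3^{2^t} = 1 + 2^{t+2}u_t$ gives $3^{2^{t+1}} = 1 + 2^{t+3}u_t + 2^{2t+4}u_t^2 = 1 + 2^{t+3}\bigl(u_t + 2^{t+1}u_t^2\bigr)$, and the new coefficient $u_t + 2^{t+1}u_t^2$ is odd because $u_t$ is odd and the second summand is even. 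This shows precisely that $v_2\bigl(3^{2^t}-1\bigr) = t+2$ for every $t\ge 1$. The content of the step is that the valuation goes up by \emph{exactly} one, which is the $p=2$ ``lifting the exponent'' phenomenon; this is the only place where a little care is needed, and it is the real obstacle in the whole lemma.

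Granting the valuation formula, the conclusion follows by comparing exponents. Taking $t = k-1$ gives $v_2\bigl(3^{2^{k-1}}-1\bigr) = k+1$, so $3^{2^{k-1}} \equiv 1 \pmod{2^{k+1}}$ and hence the order of $3$ divides $2^{k-1}$. On the other hand, $3^{2^{k-2}} \not\equiv 1 \pmod{2^{k+1}}$: for $k\ge 3$ the formula (with $t=k-2$) gives valuation $k < k+1$, while for $k=2$ one checks directly that $3^{2^0} = 3 \not\equiv 1 \pmod 8$. Thus the order of $3$ divides $2^{k-1}$ but does not divide $2^{k-2}$; the only divisor of $2^{k-1}$ with this property is $2^{k-1}$ itself. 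Equivalently, $2^{k-1}$ is the least positive $j$ with $3^j \equiv 1 \pmod{2^{k+1}}$, which is the first assertion.
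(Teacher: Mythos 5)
Your proof is correct and complete: the inductive valuation formula $v_2\bigl(3^{2^t}-1\bigr)=t+2$ is established cleanly (the key point that the new coefficient stays odd is verified), the edge case $k=2$ is handled separately as it must be, and the mod-8 periodicity argument disposes of the second assertion. For comparison, the paper gives no proof of this lemma at all---it only remarks that it ``can be proved by induction and elementary number theory''---so your write-up supplies precisely the details that remark gestures at, and there is no competing argument in the paper to contrast with.
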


\begin{proof}[Proof of Proposition \ref{prop.permutation}]
By Proposition \ref{prop.properties_of_M}, it follows that $\sigma_k$ is the permutation assigning to each $j\in \{1,2,\ldots, 2^k\}$ the number $3j-1\mod 2^k \in \{1,2,\ldots, 2^k\}$. We claim that this permutation is the product of two cycles: the cycle containing $1$ and the cycle containing $3$.

Let $a_1=1$ and let $\sigma_k^1=(a_1, a_2, a_3, \ldots )$ be the cycle starting from $1$. It is easy to see that for every $j\ge 1$, one has
$$a_j = (3^{j-1} - 3^{j-2}-\ldots-3-1) \mod 2^k = \frac{3^{j-1}+1}{2} \mod 2^k.$$
By the first assertion of the previous lemma, we obtain that
$$a_{2^{k-1}+1} = 1 \mod 2^k, \quad a_j \ne 1\mod 2^k \,\, \forall j=2,3,\ldots, 2^{k-1}.$$

Similarly, let $\sigma_k^3=(b_1, b_2, b_3, \ldots )$ be the cycle starting from $b_1=3$. Then,
$$b_j = (3^j - 3^{j-2} - \ldots -3-1)\mod2^k = \frac{5\cdot 3^{j-1}+1}{2} \mod 2^k.$$
Using again the first assertion of the previous lemma, we conclude that
$$b_{2^{k-1}+1} = 3 \mod 2^k, \quad  b_j \ne 3\mod 2^k \,\, \forall j=2,3,\ldots, 2^{k-1}.$$

Consequently, the two cycles have length $2^{k-1}$. Finally, the number $3$ does not belong to the first cycle, since if this was the case, then for some $j\ge 1$ we would have
$$ \frac{3^{j-1}+1}{2} = 3 \mod 2^{k}.$$
The above equation, however, implies that $3^{j-1} -5 =0\mod 2^{k+1}$, and according to the previous lemma we have a contradiction. Therefore, the cycles $\sigma_k^1, \sigma_k^3$ are disjoint.
\end{proof}

Since, for every $m\in\mathbb{N}$, the non zero elements of $L_k^m$ and $L_{k,P}^m$ are located at the same entries of the two matrices, we obtain the next result.

\begin{corollary}
  For each $k$, $L_k^{2^{k-1}}$ is a diagonal matrix.
\end{corollary}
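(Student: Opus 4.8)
The plan is to reduce the statement to the purely combinatorial fact about the permutation $\sigma_k$ already supplied by Proposition \ref{prop.permutation}. The starting point is the observation recorded just before the corollary: by Proposition \ref{prop.properties_of_M} the matrix $L_k$ is \emph{monomial}, i.e. it has exactly one non zero entry in each row and in each column, and the position of these entries is governed by the permutation $\sigma_k\colon j\mapsto (3j-1)\bmod 2^k$ (its support coincides with that of the permutation matrix $L_{k,P}$). It follows that for every $m$ the power $L_k^m$ is again monomial, with the position of its non zero entries governed by $\sigma_k^m$; in particular $L_k^m$ and $L_{k,P}^m$ have exactly the same support. No cancellation can spoil this, because each entry of any product $L_k^m$ receives a contribution from at most one product of matrix entries.

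First I would compute the order of $\sigma_k$. By Proposition \ref{prop.permutation}, for $k\ge 2$ the permutation $\sigma_k$ is the product of two \emph{disjoint} cycles, each of length $2^{k-1}$. The order of such a permutation is $\operatorname{lcm}(2^{k-1},2^{k-1})=2^{k-1}$, so $\sigma_k^{2^{k-1}}=\mathrm{id}$ and therefore $L_{k,P}^{2^{k-1}}=I$.

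Combining the two steps, the support of $L_k^{2^{k-1}}$ coincides with the support of $L_{k,P}^{2^{k-1}}=I$, namely the diagonal. Hence every off-diagonal entry of $L_k^{2^{k-1}}$ vanishes and $L_k^{2^{k-1}}$ is diagonal, as claimed. The case $k=1$ is handled separately by direct inspection of $M_1$, since Proposition \ref{prop.permutation} is only stated for $k\ge 2$.

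I do not anticipate a serious obstacle: all the genuine content is carried by Proposition \ref{prop.permutation}, which furnishes the two-cycle structure, together with the elementary principle that powers of a monomial matrix track powers of the underlying permutation. The only point that deserves a careful sentence is the support claim when passing from $L_{k,P}$ to $L_k$; as noted above, this is guaranteed precisely because $L_k$ is monomial, so that each entry of $L_k^{m}$ is a single product of entries rather than a sum in which cancellation could occur.
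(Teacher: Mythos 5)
Your main argument coincides with the paper's: the paper's proof is precisely the remark preceding the corollary (for every $m$, the non-zero entries of $L_k^m$ and $L_{k,P}^m$ occupy the same positions) combined with Proposition \ref{prop.permutation}, and you have correctly supplied the two details left implicit --- that powers of a monomial matrix involve no cancellation, because each entry of $L_k^m$ is a single product of non-zero factors, and that a product of two disjoint cycles of length $2^{k-1}$ has order $2^{k-1}$. For $k\ge 2$ your proof is complete and correct.

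The flaw is your final sentence, where you claim the case $k=1$ ``is handled separately by direct inspection.'' Inspection refutes that case rather than confirms it. From equation \eqref{eq.formula_of_A_k} with $k=1$ one computes
$$M_1^L=\frac{1}{2}\left[\begin{array}{cc} 1 & \omega_{1,2} \\ \omega_{2,2} & 1 \end{array}\right], \qquad L_1=\frac{1}{2}I-M_1^L= -\frac{1}{2}\left[\begin{array}{cc} 0 & \omega_{1,2} \\ \omega_{2,2} & 0 \end{array}\right],$$
so $\sigma_1$ is the single transposition $(1\,2)$ --- one cycle of length $2$, not two disjoint cycles of length $2^{k-1}=1$, which is exactly why Proposition \ref{prop.permutation} excludes $k=1$ --- and $L_1^{2^{1-1}}=L_1$ is anti-diagonal, not diagonal. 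This is really an imprecision in the corollary as stated: ``for each $k$'' should read ``for each $k\ge 2$'', in line with Proposition \ref{prop.permutation}; at $k=1$ the smallest power making $L_1^m$ diagonal is $m=2=2^k$. So in your write-up you should restrict to $k\ge 2$ (or explicitly record that the statement fails at $k=1$) rather than promising a verification that cannot be carried out.
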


Finally, the fundamental result for the matrices $M_k$ are given in the next theorem.

\begin{theorem}
For any $k\in \mathbb{N}$ we have $M_k M_k^*=I_{2^k}$, where $I_{2^k}$ is the $2^k \times 2^k$ matrix.
\end{theorem}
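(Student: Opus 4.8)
The plan is to exploit the block structure of $M_k$ established in Proposition \ref{prop.properties_of_M}. Writing $M_k=[\,M_k^L \mid M_k^R\,]$ as the concatenation of its left and right $2^k\times 2^k$ halves, the product $M_kM_k^*$ splits as a sum over the two blocks:
$$M_kM_k^* = M_k^L (M_k^L)^* + M_k^R (M_k^R)^*.$$
I would then substitute the decompositions $M_k^L=\tfrac12 I-L_k$ and $M_k^R=\tfrac12 I-R_k$ supplied by the proposition, together with the key relation $R_k=-L_k$, so that $M_k^R=\tfrac12 I+L_k$.

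First I would expand each summand. Since $(\tfrac12 I \mp L_k)^* = \tfrac12 I \mp L_k^*$, we obtain
$$M_k^L(M_k^L)^* = \tfrac14 I -\tfrac12 L_k -\tfrac12 L_k^* + L_kL_k^*, \qquad M_k^R(M_k^R)^* = \tfrac14 I +\tfrac12 L_k +\tfrac12 L_k^* + L_kL_k^*.$$
Adding these, the linear terms in $L_k$ and $L_k^*$ cancel, leaving
$$M_kM_k^* = \tfrac12 I + 2\,L_kL_k^*.$$
Thus the whole theorem reduces to the single identity $L_kL_k^*=\tfrac14 I$.

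The crux is therefore to establish $L_kL_k^*=\tfrac14 I$, and this is where the fine structure of $L_k$ enters. By Proposition \ref{prop.properties_of_M}(3)--(4), each row and each column of $L_k$ contains exactly one nonzero entry, and the nonzero entry in row $j$ lies in column $(3j-1)\bmod 2^k$; moreover, reading off equation \eqref{eq.formula_of_A_k}, every such entry equals $\pm\tfrac{\omega_{j,k+1}}{2}$, hence has modulus $\tfrac12$ because $\omega_{j,k+1}$ is a root of $-1$. Writing $(L_kL_k^*)_{rs}=\sum_m (L_k)_{rm}\,\overline{(L_k)_{sm}}$, the single-nonzero-per-row property forces this sum to vanish unless rows $r$ and $s$ have their nonzero entries in the same column; since $j\mapsto(3j-1)\bmod 2^k$ is a bijection (the column-uniqueness of part (4)), this occurs only for $r=s$. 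Hence $L_kL_k^*$ is diagonal, with $(r,r)$-entry equal to the squared modulus $\tfrac14$ of the unique nonzero entry of row $r$. This yields $L_kL_k^*=\tfrac14 I$, and substituting back gives $M_kM_k^* = \tfrac12 I + 2\cdot\tfrac14 I = I_{2^k}$.

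I expect no serious obstacle, since every ingredient is already isolated in Proposition \ref{prop.properties_of_M} and equation \eqref{eq.formula_of_A_k}. The only points requiring a moment's care are verifying the cancellation of the cross terms and, in the reduction to $L_kL_k^*$, keeping track that $*$ denotes the conjugate transpose, so that the diagonal entries come out as genuine squared moduli $\lvert\omega_{j,k+1}\rvert^2/4 = 1/4$ rather than signed quantities.
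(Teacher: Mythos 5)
Your proof is correct, but it takes a genuinely different route from the paper's. The paper proves the theorem by brute force on the entries of $M_kM_k^*$: it verifies the diagonal entries directly and then runs a number-theoretic case analysis (working modulo $2^{k+1}$) to show that every off-diagonal entry vanishes, an argument that only works for $k>2$, so the cases $k=0,1,2$ are checked by hand against Example \ref{example-matrices}. You instead exploit Proposition \ref{prop.properties_of_M} structurally: the block identity $M_kM_k^*=M_k^L(M_k^L)^*+M_k^R(M_k^R)^*$ together with $M_k^L=\tfrac12 I-L_k$, $M_k^R=\tfrac12 I+L_k$ makes the cross terms cancel algebraically, reducing the theorem to $L_kL_k^*=\tfrac14 I$, which follows because $L_k$ is $\tfrac12$ times a phased permutation matrix (one entry of modulus $\tfrac12$ per row and per column). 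This is exactly the cancellation the paper rediscovers case by case --- each of its cases ends in $\frac{\overline{\omega}_{\lambda,k+1}}{4}-\frac{\overline{\omega}_{\lambda,k+1}}{4}=0$ --- but your formulation absorbs the case analysis into the already-proven combinatorics of $L_k$, so no lower bound on $k$ is needed and the argument is uniform. What the paper's approach buys is self-containment (it never leans on the proposition); what yours buys is brevity and a conceptual explanation of why the identity holds.

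One small caveat: Proposition \ref{prop.properties_of_M} is stated only for \emph{positive} integers $k$, while the theorem is asserted for all $k\in\mathbb{N}$, and the paper's indexing includes the $1\times 2$ matrix $M_0$. For $k=0$ the four entry positions of the single row collide in pairs, so the proposition as stated does not literally apply; you should dispose of this case by the one-line computation $M_0M_0^*=\tfrac14\bigl((1-i)(1+i)+(1+i)(1-i)\bigr)=1$ (the decomposition $L_0=[i/2]$, $R_0=-L_0$ in fact still holds, so even your structural argument survives, but it needs saying). With that single sentence added, your proof is complete.
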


\begin{proof}
For $k=0,1,2$, it is easy to verify that $M_k M_k^* =I_{2^k}$ (see the matrices in Example \ref{example-matrices}). Hence, we may assume that $k> 2$.

For any $k\ge 2$ and any $j=1,2,\ldots, 2^k$, the $j$-th diagonal element of the matrix $M_k M_k^*$ is given by:
$$\frac{1}{4} \left( 1+ \abs{\omega_{j,k+1}}^2 + 1 +\abs{\omega_{j,k+1}}^2 \right) =1.$$
Therefore, we have to verify that the non-diagonal elements of the above matrix are all zero. To this end, we fix integers $1\le j,\lambda \le 2^k$, with $j\ne \lambda$, and we prove that the product if the $j$-th line of $M_k$ with the $\lambda$-th column of $M_k^*$ is zero.

Firstly, we observe that the $j$-th line of $M_k$ comprises the elements $\frac{1}{2}$, $\frac{\omega_{j,k+1}}{2}$, $\frac{1}{2}$ and $-\frac{\omega_{j,k+1}}{2}$ which are located at the columns $j$, $(3j-1) \mod 2^{k+1}$, $2^k+j$ and $2^k+3j-1 \mod 2^{k+1}$ respectively. Clearly, an analogous statement holds for the $\lambda$-th column of the matrix $M_k^*$.

We have assumed that $j\ne\lambda$. Moreover, since $1\le j,\lambda, \le 2^k$, it follows that $2^k+j\ne \lambda$. Hence, we have to distinguish the following cases for the numbers $j,\lambda$, i.e. the place of the non zero elements of the $j$-th line and $\lambda$-th column of $M_k$, $M_k^*$ respectively.

\begin{enumerate}
  \item Suppose that $j=(3\lambda-1) \mod 2^{k+1}$. In this case, we clearly have $2^k+j = (2^k+3\lambda-1) \mod 2^{k+1}$. If we assume that $\lambda= 3j-1 \mod 2^{k+1}$, then we obtain:
      $$3j = (9\lambda-3) \mod 2^{k+1} \quad \text{and} \quad 3j= \lambda+1 \mod 2^{k+1}.$$
      Hence,
      $$9\lambda-3 = \lambda+1 \mod 2^{k+1},$$
      or, equivalently, $8\lambda-4= 0 \mod 2^{k+1}$. Consequently, $4\lambda-1=0\mod 2^{k-1}$, and, since $k> 2$, it follows that $4\lambda-1$ is even, which is a contradiction.

      In a similar way, if we assume that $2^k+\lambda = 3j-1 \mod 2^{k+1}$, then (using the fact that $k>2$) we reach a contradiction. Therefore, the $(j,\lambda)$-entry of the matrix $M_kM_k^*$ is equal to:
      $$\frac{\overline{\omega}_{\lambda,k+1}}{4} - \frac{\overline{\omega}_{\lambda,k+1}}{4} =0.$$

  \item Assume that $j=(3\lambda-1+2^k) \mod 2^{k+1}$. Then, $2^k+j=(3\lambda-1) \mod 2^{k+1}$. As in the previous case, it follows with similar arguments that $\lambda\ne (3j-1) \mod 2^{k+1}$ and $2^k+\lambda \ne (3j-1) \mod 2^{k+1}$. Consequently, the $(j,\lambda)$-entry of the matrix $M_kM_k^*$ is equal to:
      $$\frac{\overline{\omega}_{\lambda,k+1}}{4} - \frac{\overline{\omega}_{\lambda,k+1}}{4} =0.$$

  \item Finally, we assume that $j\ne (3\lambda-1) \mod 2^{k+1}$ and $j \ne (3\lambda-1+2^k) \mod 2^{k+1}$. Then, in the case where either $(3j-1) \mod 2^{k+1} =\lambda$ or $(3j-1) \mod 2^{k+1} = 2^k+\lambda$, it follows that the $(j,\lambda)$-entry of the matrix $M_kM_k^*$ is equal to:
      $$\frac{\omega_{j,k+1}}{4} - \frac{\omega_{j,k+1}}{4} =0.$$
      In all the other cases, the $(j,\lambda)$-entry of the matrix $M_kM_k^*$ is trivially equal to $0$ and the proof is complete.
\end{enumerate}
\end{proof}

The next corollary is straightforward.

\begin{corollary}\label{cor-A_k-isometry}
For any $k\in \mathbb{N}$ we have $A_k^* \circ A_k =id_{V_k}$ and thus $A_k$ is an isometric embedding.
\end{corollary}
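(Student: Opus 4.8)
The plan is to read the identity $M_kM_k^*=I_{2^k}$ from the preceding theorem as an orthonormality statement about the images $A_k(\Omega_{j,k})$, and then to invoke the elementary Hilbert-space principle that a linear map sending an orthonormal basis to an orthonormal system is an isometric embedding satisfying $A_k^*\circ A_k=\mathrm{id}$. Essentially all the content is already packaged in the theorem, so the corollary is just a change of viewpoint.

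First I would fix the inner-product structures in play: by the very definition of $V_k$ and $V_{k+1}$, the families $\{\Omega_{j,k}\}_{j=1}^{2^k}$ and $\{\Omega_{j,k+1}\}_{j=1}^{2^{k+1}}$ are orthonormal bases, so that $V_k$ and $V_{k+1}$ are isometrically isomorphic to $\ell_2^{2^k}$ and $\ell_2^{2^{k+1}}$. Under this identification, equation \eqref{eq.formula_of_A_k} says precisely that the coordinate vector of $A_k(\Omega_{j,k})$ with respect to the basis $\{\Omega_{m,k+1}\}_{m=1}^{2^{k+1}}$ is exactly the $j$-th row of $M_k$. Then, for any $1\le j,\lambda\le 2^k$, I would compute the inner product of two such images,
$$\langle A_k(\Omega_{j,k}),A_k(\Omega_{\lambda,k})\rangle_{V_{k+1}} = \sum_{m=1}^{2^{k+1}} (M_k)_{jm}\,\overline{(M_k)_{\lambda m}} = (M_kM_k^*)_{j\lambda},$$
which by the theorem equals $\delta_{j\lambda}$. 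Hence $A_k$ carries the orthonormal basis of $V_k$ to an orthonormal system in $V_{k+1}$; by linearity it preserves inner products, $\langle A_kx,A_ky\rangle=\langle x,y\rangle$ for all $x,y\in V_k$, equivalently $\langle A_k^*A_kx,y\rangle=\langle x,y\rangle$ for every $y$, so that $A_k^*\circ A_k=\mathrm{id}_{V_k}$ and $A_k$ is an isometric embedding.

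There is essentially no obstacle, as the entire difficulty resides in the already-established identity $M_kM_k^*=I_{2^k}$; the only point demanding a little care is the bookkeeping of the conjugation and transposition conventions. Concretely, since the matrix of $A_k$ (acting on coordinate columns) is $M_k^t$, the matrix of $A_k^*$ is $\overline{M_k}$, and the composition $A_k^*\circ A_k$ is represented by $\overline{M_k}\,M_k^t=\overline{M_kM_k^*}=\overline{I_{2^k}}=I_{2^k}$, i.e. merely the complex conjugate of the theorem's identity. Either this direct matrix verification or the basis-image computation above yields the corollary.
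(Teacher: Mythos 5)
Your proposal is correct and follows exactly the route the paper intends: the paper offers no written proof (it declares the corollary ``straightforward''), precisely because the content is the reading of $M_kM_k^*=I_{2^k}$ as orthonormality of the images $A_k(\Omega_{j,k})$, which is what you carry out. Both your basis-image computation and the matrix bookkeeping $\overline{M_k}\,M_k^t=\overline{M_kM_k^*}=I_{2^k}$ are sound, so nothing is missing.
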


\begin{remark}
Since $A_k$ cannot be onto $V_{k+1}$ the equation $A_k \circ A_k^* =id$ is not valid.
\end{remark}

\subsection*{An isometry on a Hilbert space}
We are now ready to ``stick together" the operators $(A_k)_{k=1}^\infty$ and produce an operator on a Hilbert space. More precisely, we consider the direct sum of the spaces $(V_k)_{k=0}^\infty$, i.e.
$$V_\infty = \bigoplus_{k=0}^\infty V_k =  \left\{ x=(x_k)_{k=0}^\infty \mid x_k\in V_k \text{ for all } k\in \mathbb{N} \text{ and } \sum_{k=0}^{\infty} \|x_k\|^2<\infty \right\}.$$
The space $V\infty$ is equipped with inner product: for any $x=(x_k),y=(y_k)\in V_\infty$ we have
$$\sca{x,y} = \sum_{k=0}^{\infty} \sca{x_k, y_k}.$$
Hence, for any $x\in V_\infty$, its norm is given by $\|x\|= \left(\sum_{k=0}^{\infty} \|x_k\|^2\right)^{1/2}$. It is not hard to see that $V_\infty$ is a Hilbert space and in particular it is isometrically isomorphic to the space $\ell_2$ of all square summable sequences.

We next define the operator $S$ as the direct sum of the operators $(A_k)_{k=1}^\infty$. That is,
$$S \colon  V_\infty  \longrightarrow V_\infty$$
with
$$S \left((x_k)_{k=0}^\infty \right) = \left(0,A_0 x_0, A_1x_1, A_2x_2, \ldots \right).$$
Hence, $S$ is a ``shift-like'' operator. The next theorem summarizes the basic properties of $S$.

\begin{theorem}
  The following statements hold:
\begin{enumerate}
  \item The adjoint $S^*\colon V_\infty \to V_\infty$ of the operator $S$ is given by:
  $$S^* (x) = S^*((x_k)_{k\in\mathbb{N}} = \left(A_0^* (x_1), A_1^* (x_2), A_2^* (x_3), \ldots \right) \quad \forall x=(x_k)_{k\in\mathbb{N}}\in V_\infty.$$
  \item The operator $S$ is left unitary, that is $S^* S = id_{V_\infty}$.
  \item The operator $S$ is an isometric embedding.
\end{enumerate}
\end{theorem}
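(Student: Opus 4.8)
The plan is to verify each of the three statements in sequence, with the key observation being that $S$ is built as an (essentially) block-diagonal direct sum of the isometric embeddings $A_k\colon V_k\to V_{k+1}$, so the infinite-dimensional claims reduce to the finite-dimensional facts already established, most importantly Corollary \ref{cor-A_k-isometry}, which tells us $A_k^*\circ A_k = \mathrm{id}_{V_k}$.

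First I would compute the adjoint $S^*$ directly from the defining inner product on $V_\infty$. For arbitrary $x=(x_k)_{k=0}^\infty$ and $y=(y_k)_{k=0}^\infty$ in $V_\infty$, I would expand
\begin{align*}
\sca{Sx, y} &= \sca{(0, A_0x_0, A_1x_1, \ldots), (y_0, y_1, y_2, \ldots)}\\
&= \sum_{k=0}^\infty \sca{A_k x_k, y_{k+1}} = \sum_{k=0}^\infty \sca{x_k, A_k^* y_{k+1}}.
\end{align*}
Reading off the right-hand side as $\sca{x, (A_0^* y_1, A_1^* y_2, A_2^* y_3, \ldots)}$ forces the stated formula for $S^*$. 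To make this rigorous one must confirm that the candidate for $S^*x$ genuinely lands in $V_\infty$, i.e. that $\sum_k \|A_k^* x_{k+1}\|^2 < \infty$; since each $A_k$ is an isometric embedding its adjoint is a contraction ($\|A_k^*\|\le 1$), so $\|A_k^* x_{k+1}\|\le \|x_{k+1}\|$ and the series is dominated by $\sum_k \|x_{k+1}\|^2 \le \|x\|^2 < \infty$. This same domination shows $S^*$ is bounded, which is what legitimizes the interchange used above.

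For the second statement I would apply the formula for $S^*$ to the vector $Sx = (0, A_0x_0, A_1x_1, \ldots)$. Its $k$-th coordinate (for $k\ge 0$) is $A_{k+1-1}$ applied to $x$... more carefully, writing $Sx = (w_j)_{j=0}^\infty$ with $w_0=0$ and $w_{j+1}=A_j x_j$, the formula gives $(S^*Sx)_k = A_k^*\,(Sx)_{k+1} = A_k^*(A_k x_k) = x_k$ by Corollary \ref{cor-A_k-isometry}. Hence $S^*S = \mathrm{id}_{V_\infty}$, establishing that $S$ is left unitary. The third statement is then an immediate consequence: from $S^*S=\mathrm{id}$ we get, for every $x\in V_\infty$, $\|Sx\|^2 = \sca{Sx, Sx} = \sca{S^*Sx, x} = \sca{x,x} = \|x\|^2$, so $S$ preserves norms and is therefore an isometric embedding. (Alternatively, this follows directly from $\|Sx\|^2 = \sum_k \|A_k x_k\|^2 = \sum_k \|x_k\|^2 = \|x\|^2$ using that each $A_k$ is an isometry.)

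I do not expect any serious obstacle here, as the result is genuinely a formal consequence of the per-block isometry property together with the direct-sum structure. The only points requiring a modicum of care are the convergence/well-definedness checks — verifying that $S^*x\in V_\infty$ and that the rearrangement of the double sum in the adjoint computation is justified — and these are handled uniformly by the contraction bound $\|A_k^*\|\le 1$ that comes for free from Corollary \ref{cor-A_k-isometry}. The mild bookkeeping subtlety is the index shift introduced by the leading zero in the definition of $S$, which one must track carefully so that $A_k^*$ meets $A_k$ (and not $A_{k\pm 1}$) when forming $S^*S$; getting this alignment right is precisely what makes the cancellation to $\mathrm{id}_{V_\infty}$ work.
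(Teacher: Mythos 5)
Your proposal is correct and follows essentially the same route as the paper: the adjoint formula is read off from the inner-product identity $\sca{S(x),y}=\sum_{k}\sca{A_k(x_k),y_{k+1}}=\sum_k\sca{x_k,A_k^*(y_{k+1})}$, the identity $S^*S=id_{V_\infty}$ then follows from Corollary \ref{cor-A_k-isometry}, and the isometry property is immediate. The paper states these steps more tersely; your additional checks (that $S^*(x)$ lies in $V_\infty$ via $\|A_k^*\|\le1$, and the careful index alignment in $S^*S$) are worthwhile details that the paper leaves implicit.
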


\begin{proof}
The first assertion is straightforward, since,
$$\sca{x,S^*(y)} = \sum_{k=0}^{\infty} \sca{x_k, A_k^*(y_{k+1})} = \sum_{k=0}^{\infty} \sca{A_k(x_k), y_{k+1}} = \sca{S(x), y},$$
for all $x,y\in V_\infty$. The second assertion is an immediate consequence of the first one and Corollary \ref{cor-A_k-isometry}. Finally, the third assertion is clear.
\end{proof}

Bounded linear operators on $\ell_2$ have a natural representation by an infinite matrix. If $L\colon \ell_2 \to \ell_2$ is a bounded linear operator, then the matrix $A=(a_{i,j})$ corresponding to $L$ is defined by the relation $L(e_i) = \sum_{j=1}^{\infty} a_{i,j} e_j$. By the definition of $S$ it is easy to obtain its matrix representation.

\begin{proposition}
  The matrix $M$ corresponding to the isometry $S \colon V_\infty \to V_\infty$ is given by
$$M= \left[
\begin{array}{ccccc}
  0 & M_0 & 0_{1,4} & 0_{1,8} & \ldots  \\
  0 & 0_{2,2} & M_1 & 0_{2,8} & \ldots  \\
  0 & 0_{4,2} & 0_{4,4} & M_2 & \ldots  \\
  \vdots & \vdots & \vdots & \vdots &
\end{array}
\right],$$
where $0_{i,j}$ denotes the $(i,j)$-zero matrix.
\end{proposition}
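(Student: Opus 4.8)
The plan is to read the matrix $M$ directly off the action of $S$ on the natural orthonormal basis of $V_\infty$, using the matrix convention fixed just before the statement: the $i$-th row of the matrix of a bounded operator $L$ on $\ell_2$ records the coordinates of $L(e_i)$. First I would fix an ordering of an orthonormal basis of $V_\infty$ compatible with the decomposition $V_\infty = \bigoplus_{k=0}^\infty V_k$. Since each $V_k$ is isometrically isomorphic to $\ell_2^{2^k}$ via $\Omega_{j,k}\mapsto e_j$, the family $\{\Omega_{j,k} \mid k\in\mathbb{N},\ 1\le j\le 2^k\}$, listed block by block (first the single basis vector of $V_0$, then the two of $V_1$, then the four of $V_2$, and so on), is an orthonormal basis of $V_\infty$. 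Under this listing the rows and columns of $M$ are grouped into consecutive blocks of sizes $2^0, 2^1, 2^2, \ldots$, the $k$-th block corresponding to $V_k$.

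Next I would compute $S$ on each basis vector. Regarding $\Omega_{j,k}$ as the element of $V_\infty$ whose $k$-th coordinate is $\Omega_{j,k}$ and whose other coordinates vanish, the definition $S((x_k)) = (0, A_0x_0, A_1x_1, \ldots)$ gives
$$S(\Omega_{j,k}) = \big(0,\ldots,0, A_k(\Omega_{j,k}), 0, \ldots\big),$$
where $A_k(\Omega_{j,k})\in V_{k+1}$ sits in the $(k+1)$-th block. By the definition of $M_k$ as the matrix of $A_k$ in the chosen bases (recall the formula \eqref{eq.formula_of_A_k} supplies the $j$-th line of $M_k$), the coordinates of $A_k(\Omega_{j,k})$ over $\{\Omega_{i,k+1}\}_{i=1}^{2^{k+1}}$ are precisely the $j$-th row of $M_k$. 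Hence the row of $M$ indexed by $\Omega_{j,k}$ has all its nonzero entries inside the column-block associated with $V_{k+1}$, and within that block it equals the $j$-th row of $M_k$.

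Collecting these rows over $j=1,\ldots,2^k$, the entire row-block associated with $V_k$ consists of the $2^k\times 2^{k+1}$ matrix $M_k$ placed in the column-block associated with $V_{k+1}$, with zeros everywhere else; in particular the column-block associated with $V_0$ is identically zero, since no basis vector is mapped into $V_0$. This is exactly the super-diagonal block pattern displayed in the statement, in which the $(k,k+1)$ block equals $M_k$ and every remaining block $0_{2^k,2^\ell}$ vanishes. The only point that requires care is the bookkeeping of block sizes together with the row-versus-column convention; once the ordering above is fixed, matching the off-diagonal blocks to the $M_k$ and confirming the vanishing of the initial column-block is immediate, so no genuine obstacle arises.
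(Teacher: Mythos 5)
Your proposal is correct and follows exactly the argument the paper intends: the paper offers no explicit proof, stating only that the matrix representation follows from the definition of $S$, and your block-by-block bookkeeping (ordering the basis $\{\Omega_{j,k}\}$ by levels, noting that $S(\Omega_{j,k})$ lands in the $V_{k+1}$ block with coordinates given by the $j$-th row of $M_k$, and that the $V_0$ column block is zero) is precisely the routine verification being left to the reader. No gap; the convention-matching you flag is the only point of care, and you handle it consistently with the paper's row convention $L(e_i)=\sum_j a_{i,j}e_j$.
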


\section{Wold decomposition of the isometry $S$}\label{sec.Wold-decomp}
The operator $S$ defined in the previous sections is an isometry on the Hilbert space $V_\infty (\approx \ell_2)$. Therefore, the Wold-von Neummann decomposition can be applied (see for example \cite{Halmos}). This decomposition asserts that every isometry $S$ on a Hilbert space can be written as a direct sum $S=(\oplus_{a\in A}S_a) \oplus U$, where $U$ is a unitary operator (possibly vacuous), $S_a$ is the unilateral shift on a Hilbert space $H_a$ and the spaces $(H_a)_{a\in A}$ are isomorphic to each other. This section's purpose is to describe the above decomposition of the operator $S$. We start with the ``unitary part'' $U$. Since, $S$ itself is a shift-like operator, the next proposition is not surprising.

\begin{proposition}
The unitary part $U$ of the operator $S$ is zero.
\end{proposition}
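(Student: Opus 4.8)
The plan is to identify the unitary part of the Wold--von Neumann decomposition explicitly and then show it is trivial. Recall that for any isometry $S$ on a Hilbert space $H$ one has $H = \big(\bigoplus_{n\ge 0} S^n L\big)\oplus H_u$, where $L = V_\infty \ominus S(V_\infty)$ is the wandering subspace and the unitary part acts on $H_u = \bigcap_{n\ge 0} S^n(V_\infty)$; the operator $U$ of the statement is precisely the restriction $S|_{H_u}$. Thus it suffices to prove that $H_u = \{0\}$.

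The key step is to exploit the shift-like structure of $S$. Since $S\big((x_k)_{k=0}^\infty\big) = (0, A_0 x_0, A_1 x_1, \ldots)$, the zeroth coordinate of every vector in the range of $S$ vanishes. Iterating, I would establish by a straightforward induction on $n$ that for every $x=(x_k)_{k=0}^\infty \in V_\infty$ the first $n$ coordinates of $S^n(x)$ are zero; more precisely $(S^n x)_k = 0$ for $0\le k\le n-1$, while $(S^n x)_k = A_{k-1}A_{k-2}\cdots A_{k-n}\,x_{k-n}$ for $k\ge n$. Consequently $S^n(V_\infty)\subseteq \{y\in V_\infty : y_0 = y_1 = \cdots = y_{n-1}=0\}$ for each $n$.

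Taking the intersection over all $n$ then forces every coordinate to vanish: if $y\in\bigcap_{n\ge 0}S^n(V_\infty)$, then $y_k = 0$ for all $k$, so $y=0$ and $H_u=\{0\}$. Hence the unitary part $U$ is vacuous, which is exactly the assertion.

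I do not expect a serious obstacle here; the result is a structural consequence of the fact that $S$ shifts the $V_k$-components strictly forward (each $A_k$ sends $V_k$ into $V_{k+1}$ while the zeroth slot is always filled with $0$). The only points requiring a little care are correctly recalling that the unitary part of the Wold decomposition is $\bigcap_n S^n(V_\infty)$ rather than some kernel of $S^*$, and keeping the index bookkeeping straight in iterating $S^n$, since the components live in different spaces $V_k$ and the operators $A_k$ are applied in a shifted fashion.
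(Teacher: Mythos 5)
Your proof is correct and follows essentially the same route as the paper: identify the unitary part as the restriction of $S$ to $\bigcap_{n\ge 0}S^n(V_\infty)$, observe that the shift-like structure forces $S^n(V_\infty)$ into the tail $\bigoplus_{j\ge n}V_j$, and conclude the intersection is trivial. Your coordinate-by-coordinate induction actually makes precise a step the paper states only loosely, so nothing is missing.
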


\begin{proof}
In order to find the unitary part of the isometry, one has to apply the operator $S$ successively and take the space $V_u = \bigcap_{n=0}^\infty S^n(V_\infty)$. This space is $S$-invariant and the unitary part is the restriction of $S$ to this subspace. However, $S$ is a shift-like operator, in the sense that $S^n(V_\infty) \subseteq \cup_{j=n}^\infty A_j$. Consequently, $V_u$ is the trivial space and the unitary part of $S$ is zero.
\end{proof}

\begin{remark}
On account of the fact that we are interested mainly in the limit behaviour of $S$ under iteration, it is rather unfortunate that the unitary part of $S$ does not exist. Under different circumstances (probably using some other operator), a non-zero unitary part could be more interesting for the Collatz conjecture.
\end{remark}

The above proposition implies that the operator $S$ can be written as a sum $S=\oplus_{a\in A}S_a$, where $S_a$ is the unilateral shift on a Hilbert space $H_a$. Recall from Section \ref{sec.ell-2-oper} that the space $V_\infty$ is the direct sum of the finite dimensional spaces $\{V_k\}_{k=0}^\infty$ and the operator $S$ is the direct sum of the operators $A_k \colon V_k \to V_{k+1}$, thus we have:
$$V_0 \stackrel{S}{\longrightarrow} V_1 \stackrel{S}{\longrightarrow} V_2 \stackrel{S}{\longrightarrow} V_3 \ldots$$
Let $e_{10}$ be any vector of $V_0$ with norm $1$. We consider the sequence of vectors $e_{10}, S(e_{10}), S^2(e_{10}), \ldots$, where $S^i(e_{10}) \in V_i$ for every $i\in \mathbb{N}$, and we set:
$$H_0 = \overline{\textrm{span}} \{ S^i(e_{10}) \mid i=0,1,2,\ldots\} \subset V_\infty.$$
It is not hard to see that the subspace $H_0$ of $V_\infty$ is isometrically isomorphic to the Hilbert space $\ell_2$. Furthermore, $H_0$ is $S$-invariant and the restriction $S_0$ of $K$ to $H_0$ is given by
$$ S_0\left( \sum_{i=0}^{\infty} \lambda_i S^i(e_{10}) \right)= S\left( \sum_{i=0}^{\infty} \lambda_i S^i(e_{10}) \right) = \sum_{i=0}^{\infty} \lambda_i S^{i+1}(e_{10}),$$
i.e. $S_0$ is the unilateral shift of $H_0$.

We next observe that $S(V_0)=\sca{S(e_{10})}$ is a one-dimensional subspace of $V_1$. Let us denote by $M_1$ the orthogonal complement of $S(V_0)$ in $V_1$. Then $M_1$ is also one-dimensional. We repeat the previous procedure by considering any norm-one vector $e_{11}$ in $M_1$ and setting $H_1 = \overline{\textrm{span}} \{ S^i(e_{11}) \mid i=0,1,2,\ldots\} \subset V_\infty$. Then, $H_1$ is a $S$-invariant subspace of $V_\infty$ and the restriction $S_1$ of $S$ into $H_1$ is the unilateral shift.

In the general case, we consider the orthogonal complement $M_k$ of $S(V_{k-1})$ in $V_k$. Then $M_k$ is a $2^{k-1}$-dimensional subspace of $V_k$. We fix an orthonormal basis $\{e_{jk}\}_{j=1}^{2^{k-1}}$ of $M_k$. For each $j=1,2,\ldots, 2^{k-1}$, we set $H_{jk} = \overline{\textrm{span}} \{ S^i(e_{jk}) \mid i=0,1,2,\ldots\} \subset V_\infty$. Then, $\{H_{jk}\}_{j=1}^{2^{k-1}}$ are isomorphic to $\ell_2$, $S$-invariant subspaces of $V_\infty$ and the restriction $S_{jk}$ of $S$ into $H_{jk}$ is the unilateral shift.

By the above construction, we get immediately the decomposition of the operator $S$.

\begin{proposition}
  The following hold.
  \begin{enumerate}
    \item The spaces $\{H_{10}, H_{jk} \mid k=1,2,\ldots, j=1,2,\ldots, 2^{k-1} \}$ are isometrically isomorphic to $\ell_2$, $S$-invariant subspaces of $V_\infty$ and the restriction $S_{jk}$ of $S$ to $H_{jk}$ is the unilateral shift.
    \item The space $V_\infty$ is the direct sum of $H_{jk}$.
    \item The operator $S$ is the sum of the copies $S_{jk}$ of the unilateral shift.
  \end{enumerate}
\end{proposition}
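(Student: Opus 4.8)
The plan is to deduce all three assertions from the abstract Wold--von Neumann decomposition together with the fact, established in the preceding proposition, that the unitary part of $S$ vanishes. Recall that for a pure isometry (one whose unitary part is zero) the whole space decomposes as the orthogonal sum $V_\infty = \bigoplus_{n=0}^\infty S^n W$, where $W = V_\infty \ominus S(V_\infty) = \ker S^*$ is the \emph{wandering subspace}. The copies of the unilateral shift are then indexed by any orthonormal basis of $W$: each basis vector $e$ generates the cyclic subspace $\overline{\textrm{span}}\{S^n e \mid n\ge 0\}$, and $S$ restricted to it is a unilateral shift. Hence the entire problem reduces to computing $W$ and recognising that the vectors $e_{10}$ and $(e_{jk})$ chosen in the construction form an orthonormal basis of it.

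First I would identify the wandering subspace explicitly. Since $S\big((x_k)_{k\ge 0}\big) = (0, A_0 x_0, A_1 x_1, \ldots)$ and the summands $(V_k)_{k\ge 0}$ are mutually orthogonal in $V_\infty$, the range $S(V_\infty)$ is the orthogonal sum whose $V_0$-component is $\{0\}$ and whose $V_k$-component ($k\ge 1$) is $A_{k-1}(V_{k-1}) = S(V_{k-1})$; here I use that $S$, being an isometry, has closed range. Taking orthogonal complements summand by summand gives
$$ W = V_\infty \ominus S(V_\infty) = V_0 \oplus \bigoplus_{k=1}^\infty \big(V_k \ominus S(V_{k-1})\big) = M_0 \oplus \bigoplus_{k=1}^\infty M_k, $$
where I set $M_0 = V_0$. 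Thus the norm-one vector $e_{10}$ spanning $V_0$, together with the orthonormal bases $(e_{jk})_{j=1}^{2^{k-1}}$ of the spaces $M_k$ ($k\ge 1$), constitute an orthonormal basis of $W$.

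Next I would invoke the wandering property $S^m W \perp S^n W$ for $m \ne n$: using $S^*S = \mathrm{id}_{V_\infty}$, for $n > m$ one has $\sca{S^m e, S^n e'} = \sca{e, S^{\,n-m}e'}$ with $S^{\,n-m}e' \in S(V_\infty)$, whence the inner product vanishes because $e \in W \perp S(V_\infty)$; for $n = m$ it equals $\sca{e, e'}$. This shows that the generated subspaces $H_{10}$ and $H_{jk}$ are pairwise orthogonal and that, on each of them, $S$ acts as the unilateral shift $S^i e \mapsto S^{i+1} e$ — which is precisely assertion (1), already recorded in the construction. Finally, because the unitary part is zero, $\bigoplus_{n\ge 0} S^n W = V_\infty$; expanding $S^n W$ over the basis of $W$ rearranges this exactly into $H_{10} \oplus \bigoplus_{k\ge 1}\bigoplus_{j=1}^{2^{k-1}} H_{jk}$, giving assertion (2), and assertion (3) is then the corresponding block-diagonal form $S = S_{10} \oplus \bigoplus_{k,j} S_{jk}$.

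The only genuinely paper-specific computation is the identification of $W$ with $\bigoplus_{k\ge 0} M_k$ in the first step; everything else is the formal machinery of the Wold decomposition. Accordingly, I expect the main obstacle to be essentially bookkeeping: making the orthogonal-complement computation for $S(V_\infty)$ rigorous — in particular justifying that the complement may be taken summand by summand across the infinite direct sum, which rests on $S(V_\infty)$ respecting the block structure $V_\infty=\bigoplus_k V_k$ — and verifying that the re-indexing of $\bigoplus_n S^n W$ by the chosen basis of $W$ reproduces exactly the family $\{H_{10}, H_{jk}\}$.
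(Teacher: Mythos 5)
Your proof is correct and takes essentially the same route as the paper: the spaces $M_k = V_k \ominus S(V_{k-1})$ that you assemble into the wandering subspace $W = V_\infty \ominus S(V_\infty)$ are exactly the spaces the paper constructs level by level, and both arguments rest on the previously proved vanishing of the unitary part. If anything, your explicit identification of $W=\ker S^*$ (using that $S(V_\infty)$ respects the block structure $\bigoplus_k V_k$) and the completeness step $\bigoplus_{n\ge 0} S^n W = V_\infty$ supply precisely the details the paper compresses into ``by the above construction, we get immediately the decomposition.''
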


\section{Collatz conjecture and $C^*$-algebras} \label{sec.C^star-alg}
In this section, we utilize the $C^*$-algebra generated by the isometry $S$ (defined in Section \ref{sec.ell-2-oper}) as well as the $C^*$-algebras of other isometries in order to gain further insight into the Collatz conjecture.

It is clear that the convergence of the Collatz dynamical system $(T^k(n))_{k=1}^\infty$, $n\in\mathbb{N}^*$, depends on how the sequences of signs $\left((-1)^{T^k(n)} \right)_{n=1}^\infty$ correlate with each other. Since these sequences are periodic with period $2^k$, it suffices to consider their first parts, i.e. the sequences $p_k=\left((-1)^{T^k(\upsilon)} \right)_{\upsilon=1}^{2^k}$. In this study, we utilize the inner product of two sequences as a measure of their correlation.

These correlations can be equivalently seen in the frequency domain in the corresponding transformed sequences as follows. Recall from Section \ref{sec.periodicity} the polynomials $P_k$, $k\in \mathbb{N}$, defined as $P_k(x) = \sum_{\upsilon=1}^{2^k} (-1)^{T^k(\upsilon)} x^\upsilon$. We also need the Discrete Fourier Transform (DFT) matrices based on the roots of the cyclotomic equation $x^{2^k}+1=0$, i.e.
$$W_k= \left(\omega_{jk}^i \right)_{i=1, j=1}^{2^k, 2^k} \in \mathbb{C}^{2^k\times 2^k}.$$
The above matrices satisfy $W_k W_k^\ast = W_k^\ast W_k =2^k I_{2^k}$ and hence can be normalized by considering the matrices $\frac{1}{2^{k/2}} W_k$. Therefore, the transformed sequences, denoted by $\widetilde{p_k}$ are defined as DFT of the normalized sequence of signs $\frac{1}{2^{k/2}} p_k$, i.e.
$$\widetilde{p_k} = DFT\left(\frac{p_k}{\|p_k\|_2} \right) = \frac{1}{2^{k/2}} p_k \left(\frac{1}{2^{k/2}} W_k \right) =\frac{1}{2^k} \left( P_k (\omega_{jk}\right)_{j=1}^{2^k}.$$
One of the main results of the paper is that can be viewed collectively in terms of the isometry $S$ defined on the Hilbert space $V_\infty = V_0 \oplus V_1 \oplus V_2\oplus \ldots$ which is represented by the following matrix:
$$M= \left[ \begin{array}{ccccc}
             0 & M_0 & 0 & 0 &\ldots\\
             0 & 0 & M_1 & 0 &\ldots\\
             0 & 0 & 0 & M_2 &\ldots\\
             \vdots & \vdots & \vdots & \vdots  &\ldots
           \end{array} \right]$$
(see Section \ref{sec.ell-2-oper}). The information concerning the signs of the Collatz dynamical systems provided by the above isometry can be summarized in the following formula:
\begin{align*}
  e_1 (I-\lambda M)^{-1} = & e_1 (I+\lambda M +\lambda^2M^2 + \ldots) \\
   & = (1, \lambda \widetilde{p_1}, \lambda^2 \widetilde{p_2} , \lambda^3 \widetilde{p_3}, \ldots ),
\end{align*}
which defines the vector $\xi^T$ of all sequences of signs in the frequency domain ($\lambda$ is a fixed complex number in the unit disc, i.e. $\abs{\lambda}<1$).

The Collatz conjecture can be studied via the various autocorrelations of the vector $\xi^T$. These autocorrelations have to be matched to the autocorrelations of the dynamical system of shift type (not to be confused with the shift operator). The sequences of signs of the ideal shift are given by $p_k^o = ((-1)^k)_{\upsilon=1}^{2^k}$ and in normalized form $\frac{1}{\|p_k^o\|} p_k^o = \frac{1}{2^{k/2}} p_k^o$. The corresponding polynomials are:
$$P_k^o (x) =\sum_{\upsilon=1}^{2^k} (-1)^k x^\upsilon.$$
The corresponding transformed sequences in the frequency domain are given by:
$$\widetilde{P_k^o} = DFT \left(\frac{p_k^o}{\|p_k^o\|} \right) = \frac{1}{2^k} \left(P_k^o (\omega_{jk})\right)_{j=1}^{2^k}.$$

In the time domain, the transition map that assigns $\frac{1}{2^{k/2}} p_k^o$ to $\frac{1}{2^{\frac{k+1}{2}}} p_{k+1}^o$ is given by the matrix $-\frac{1}{\sqrt{2}} [I_{2^k}, I_{2^k}]$. Consequently, in the frequency domain the transition map between $\widetilde{p_k^o}$ and $\widetilde{p_{k+1}^o}$ is given by Fourier conjugation as follows:
$$M_k^o = \frac{1}{2^{k+1}} \left[ -W_k^\ast , -W_k^\ast \right] W_{k+1}.$$
These maps define an isometry:
$$M^o= \left[ \begin{array}{ccccc}
             0 & M_0^o & 0 & 0 &\ldots\\
             0 & 0 & M_1^o & 0 &\ldots\\
             0 & 0 & 0 & M_2^o &\ldots\\
             \vdots & \vdots & \vdots & \vdots  &\ldots
           \end{array} \right]$$
in terms of which the sequence of signs (in the frequency domain) of all levels of the ideal shift is given by:
$$\xi^o = e_1 (I-\lambda M^o)^{-1} = (1, \lambda \widetilde{p_1^o}, \lambda^2 \widetilde{p_2^o}, \lambda^3 \widetilde{p_3^o}, \ldots ).$$

\subsection*{Correlations of parity vectors in an operator theoretic setting}
The correlations of the sequences of signs of the Collatz system in various levels can be seen collectively as autocorrelations of a single infinite dimensional vector $\xi^T$. These autocorrelations have to be compared with the autocorrelations of the vector $\xi^o$ coming from the ideal shift. To this end the following two issues have to be addressed:
\begin{enumerate}
  \item An operator theoretic tool has to be derived to set up all these autocorrelations collectively.
  \item The vectors $\xi^T$, $\xi^o$ have to be broken down into a low and high part, since only the low part can assume ideal autocorrelations.
\end{enumerate}
The first requirement can be tackled with the definition of an appropriate isometry on the space $V_\infty$ which is described next. Due to the periodicity of the sequences of signs $p_k$, two consecutive sequences $\frac{1}{2^{k/2}} p_k$ and $\frac{1}{2^{\frac{k+1}{2}}} p_{k+1}$ can be correlated by sending the first one to its image via the isometry which in matrix form is given by $\frac{1}{\sqrt{2}} [I_{2^k}, -I_{2^k}]$.

In the frequency domain, the Fourier conjugate of the above map is given by:
$$C_k = \frac{1}{2^{k+1}} [ W_k^\ast, -W_k^\ast ] W_{k+1}$$
and it can be used to correlate $\widetilde{p_k}$ and $\widetilde{p_{k+1}}$ , as
$$\sca{\widetilde{p_k}C_k, \widetilde{p_{k+1}}} = \sca{\widetilde{p_k}, \widetilde{p_{k+1}} C_k^\ast}.$$
Hence, one can correlate $\widetilde{p_k}$ and $\widetilde{p_{k+\lambda}}$ via the composition
$$C_k \circ C_{k+1} \circ \ldots \circ C_{k+l-1}.$$
Consequently, all autocorrelations of $\xi$ vectors can be collectively viewed via the isometry
$$C=  \left[ \begin{array}{ccccc}
             0 & C_0 & 0 & 0 &\ldots\\
             0 & 0 & C_1 & 0 &\ldots\\
             0 & 0 & 0 & C_2 &\ldots\\
             \vdots & \vdots & \vdots & \vdots  &\ldots
           \end{array} \right]$$
by invoking expressions of the form $\frac{1}{\|\xi\|^2} \sca{\xi C^{\ast \nu}, \xi C^{\ast \mu}}$.

As far as the decomposition of $\xi$ in low and high part, this is achieved as follows. We fix an integer $r$, with $1<r<2^k$ and we decompose $\widetilde{p_k}$ (and similarly $\widetilde{p_{k+1}}$) into two vectors:
$$\widetilde{p_k} = \Delta_L^r \widetilde{p_k} +\Delta_H^r \widetilde{p_k} = \frac{1}{2^{k/2}} DFT\left( id_{\le r} (p_k) \right) + \frac{1}{2^{k/2}} DFT\left( id_{> r} (p_k) \right),$$
where $id_{\le r} \left((-1)^{T^k(\upsilon)}\right)_{\upsilon=1}^{2^k} = \left((-1)^{T^k(1)}, (-1)^{T^k(2)} , \ldots , (-1)^{T^k(r)}, 0, \ldots ,0 \right)$ and $id_{>r}$ is defined in the obvious way. Therefore, for a sequence $r=(r_i)_{i=1}^\infty$, with $1< r_i <2^i$ for any $i\in\mathbb{N}$, we can define the vectors
\begin{align*}
  \xi^T_r  = &  \left(1, \lambda \Delta_L^{r_1} \widetilde{p_1} , \lambda^2  \Delta_L^{r_2} \widetilde{p_2}, \lambda^3 \Delta_L^{r_3} \widetilde{p_3} ,\ldots \right)\\
  \xi^o_r  = &  \left(1, \lambda \Delta_L^{r_1} \widetilde{p_1^o} , \lambda^2  \Delta_L^{r_2} \widetilde{p_2^o}, \lambda^3 \Delta_L^{r_3} \widetilde{p_3^o} ,\ldots \right).
\end{align*}

\subsection*{Collatz powers and correlations}
The existence of a single attractor to which all Collatz trajectories converge implies that $T^{f(n)}(n) \in \{1,2\}$ for some function $f\colon \mathbb{N}^* \to \mathbb{N}^*$. Conversely, given $f\colon \mathbb{N}^* \to \mathbb{N}^*$ we are interested in assess the convergence properties of $f$ acting by exponentiation on $T$. We may assume that $f$ is increasing and that $f(n) \ge \log_2 n$. Denote by $Inc$ the set of all these functions, i.e.
$$Inc = \{ f\colon \mathbb{N}^* \to \mathbb{N}^* \mid f \text{ is increasing and } f(n) \ge \log_2 n \,\, \forall n \in\mathbb{N}^*\}.$$
Any function belonging to the above set is piecewise constant in a wider sense and therefore it is determined by two strictly increasing sequences:
$$x(f) = (a_1, a_2, a_3, \ldots) \quad \text{and} \quad y(f) = (b_1,b_2,b_3, \ldots ),$$
as follows:
$$f(n) = y(f)_i =b_i \quad \text{for any } i \text{ with } a_i \le n <a_{i+1}.$$

Any function $f\in Inc$ defines a new increasing function $f^\ast \colon \mathbb{N}^* \to \mathbb{N}^*$ such that $x(f^\ast) =y(f) $ and $y(f^\ast)_i = x(f)_{i+1}-1$ for any $i\ge 0$. In this setting, we call $f^\ast$ the conjugate of $f$. It is not hard to observe that for any positive integer $n$, we have:
$$ f(f^\ast(n)) \le n \quad \text{ and } \quad f^\ast(f(n)) \ge n.$$
Indeed, for the first inequality, assume that $b_i \le n <b_{i+1}$ for some $i$. Then $f^\ast(n) = a_{i+1}-1 <a_{i+1}$, which implies that $f(f^\ast(n)) \le b_i \le n$.

As far as the second inequality is concerned, we assume that $a_i \le n<a_{i+1}$ for some $i$. Then, $f(n)=b_i$ which implies that $f^\ast(f(n))=f^\ast(b_i)=a_{i+1}-1\ge n$.

The functions $f$ and $f^\ast$ have the following duality property concerning the solvability of the Collatz conjecture.

\begin{theorem}
  Assume that $f\in Inc$. The following are equivalent.
  \begin{enumerate}
    \item $T^{f(n)}(n) \in \{1,2\}$ for every $n\in \mathbb{N}^*$.
    \item For every $n_1 \le n_2$ in $\mathbb{N}^*$, we have
    $$ \sca{id_{\le f^\ast(n_1)} \left(p_{n_1}\right), id_{\le f^\ast(n_2)} \left(p_{n_2}\right)} = (-1)^{n_2-n_1} f^\ast (n_1).$$
  \end{enumerate}
\end{theorem}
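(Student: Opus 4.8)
The plan is to unwind the inner product in assertion (2) into an explicit finite sum and then read both implications off a parity computation inside the trivial cycle. Since $f^\ast$ is increasing and $n_1\le n_2$, we have $f^\ast(n_1)\le f^\ast(n_2)$, so the two truncated vectors $id_{\le f^\ast(n_1)}(p_{n_1})$ and $id_{\le f^\ast(n_2)}(p_{n_2})$ overlap exactly in the coordinates $\upsilon=1,\ldots,f^\ast(n_1)$, and since all entries are real ($\pm1$) we get
$$\sca{id_{\le f^\ast(n_1)}(p_{n_1}),\, id_{\le f^\ast(n_2)}(p_{n_2})} = \sum_{\upsilon=1}^{f^\ast(n_1)} (-1)^{T^{n_1}(\upsilon)+T^{n_2}(\upsilon)}.$$
First I would record two elementary facts. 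From the displayed inequalities $f(f^\ast(n))\le n$ and $f^\ast(f(n))\ge n$ together with monotonicity of $f,f^\ast$, one obtains the duality $\upsilon\le f^\ast(n_1)\iff f(\upsilon)\le n_1$; and from $f(n)\ge\log_2 n$ one gets $f^\ast(n_1)\le 2^{n_1}$, so the truncation stays inside the length-$2^{n_1}$ vector $p_{n_1}$ and the sum above is well defined.

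For (1)$\Rightarrow$(2), fix $\upsilon\le f^\ast(n_1)$, so that $f(\upsilon)\le n_1\le n_2$. By (1) we have $T^{f(\upsilon)}(\upsilon)\in\{1,2\}$, and since $T(1)=2$ and $T(2)=1$ the orbit stays in the trivial cycle; hence $T^m(\upsilon)\in\{1,2\}$ for every $m\ge f(\upsilon)$, in particular for $m=n_1,n_2$. Inside this $2$-cycle the parity of $T^m(\upsilon)$ alternates with $m$, so $T^{n_1}(\upsilon)\equiv \alpha+n_1$ and $T^{n_2}(\upsilon)\equiv\alpha+n_2\pmod 2$ for a common constant $\alpha=\alpha(\upsilon)$. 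Therefore $T^{n_1}(\upsilon)+T^{n_2}(\upsilon)\equiv n_1+n_2\pmod 2$, so every summand equals $(-1)^{n_1+n_2}=(-1)^{n_2-n_1}$, and the sum collapses to $(-1)^{n_2-n_1}f^\ast(n_1)$, which is (2).

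For (2)$\Rightarrow$(1) the key is a rigidity remark: the sum above has $f^\ast(n_1)$ terms, each equal to $\pm1$, and its value is prescribed to be $(-1)^{n_2-n_1}f^\ast(n_1)$; but a sum of $N$ signs equals $\pm N$ only when every one of the $N$ signs equals that sign. Hence (2) forces $(-1)^{T^{n_1}(\upsilon)+T^{n_2}(\upsilon)}=(-1)^{n_2-n_1}$ for each $\upsilon\le f^\ast(n_1)$. Now fix an arbitrary $\upsilon$ and specialize $n_1=f(\upsilon)$ (so $\upsilon\le f^\ast(f(\upsilon))$); letting $n_2\ge n_1$ range freely, the last identity reads $T^{n_2}(\upsilon)\equiv n_2+c\pmod 2$ for a constant $c$, i.e. the sign sequence $\big((-1)^{T^m(\upsilon)}\big)_{m\ge f(\upsilon)}$ alternates and is thus periodic with period exactly $2$. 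By Corollary \ref{cor.periodicity} this yields $T^{f(\upsilon)}(\upsilon)\in\{1,2\}$, and since $\upsilon$ was arbitrary this is (1).

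I expect the main obstacle to be the converse direction: one must resist comparing whole vectors and instead exploit that the extremal value $\pm N$ of a $\pm1$-sum pins down every individual coordinate, and then choose the test parameter $n_1=f(\upsilon)$ so that the free parameter $n_2$ sweeps out the entire tail of the sign sequence, which is precisely the input Corollary \ref{cor.periodicity} requires. The bookkeeping of the conjugate $f^\ast$ (the two inequalities and the bound $f^\ast(n_1)\le 2^{n_1}$) is routine but must be established first so that the truncated inner product even makes sense.
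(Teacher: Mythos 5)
Your proof is correct and follows essentially the same route as the paper's: both directions rest on expanding the inner product as the sum $\sum_{\upsilon\le f^\ast(n_1)}(-1)^{T^{n_1}(\upsilon)+T^{n_2}(\upsilon)}$, using the monotonicity inequalities $f(f^\ast(n))\le n$ and $f^\ast(f(n))\ge n$, the parity alternation inside the trivial cycle for the forward implication, the extremal-sum rigidity plus the specialization $n_1=f(\upsilon)$ for the converse, and Corollary \ref{cor.periodicity} to close. Your write-up is in fact slightly more careful than the paper's on two routine points it leaves implicit — that $f^\ast(n_1)\le 2^{n_1}$ so the truncation fits inside $p_{n_1}$, and the explicit statement of the $\pm N$ rigidity argument.
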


\begin{proof}
  In order to prove the implication $(1)\Rightarrow(2)$, we firstly observe that for any $n,k\in\mathbb{N}^*$ with $k\le f^\ast(n)$, we have $T^n(k) \in \{1,2\}$. Indeed, since $f$ is increasing, we obtain that $f(k) \le f(f^\ast(n)) \le n$. However, $T^{f(k)}(k)\in\{1,2\}$ and, thus, $T^n(k) \in\{1,2\}$.

  Secondly, if $T^n(k) \in \{1,2\}$ for some $n\in\mathbb{N}^*$, then the sequence $(T^{n+i}(k))_{i=0}^\infty$ is 2-periodic taking successively the values $1,2$. This easily implies that $T^{n+i}(k) - T^n(k) = i \mod 2$ for any $i\ge 0$. Therefore,
  $$(-1)^{T^{n+i}(k) + T^n(k)} =(-1)^i.$$

  By the above remarks, we deduce that
  \begin{align*}
    \sca{id_{\le f^\ast(n_1)} \left(p_{n_1}\right), id_{\le f^\ast(n_2)} \left(p_{n_2}\right)} & = \sum_{1\le k \le f^\ast(n_1)} (-1)^{T^{n_2}(k)+T^{n_1}(k)} \\
    = & \sum_{1\le k \le f^\ast(n_1)} (-1)^{T^{n_1+n_2-n_1}(k)+T^{n_1}(k)}\\
    = & \sum_{1\le k \le f^\ast(n_1)} (-1)^{n_2-n_1}=  (-1)^{n_2-n_1} f^\ast(n_1)
  \end{align*}

  For the reverse implication, we fix $n\in\mathbb{N}^*$ and we set $n_1=f(n)$. By Corollary \ref{cor.periodicity}, it suffices to prove that the sequence $\left((-1)^{T^{n_1+i}(n)}\right)_{i=0}^\infty$ is periodic with period equal to $2$. For an $i\in\mathbb{N}$, we set $n_2=n_1+i$ and by our hypothesis we obtain:
  $$\sca{id_{\le f^\ast(n_1)} \left(p_{n_1}\right), id_{\le f^\ast(n_2)} \left(p_{n_2}\right)} = (-1)^{n_2-n_1} f^\ast (n_1),$$
  i.e.
  $$\sum_{k=1}^{f^\ast(n_1)} (-1)^{T^{n_2}(k)+T^{n_1}(k)} = (-1)^i f^\ast(n_1).$$
  The last equation implies that
  $$(-1)^{T^{n_2}(k)+T^{n_1}(k)} = (-1)^i \quad \forall k=1,2,\ldots, f^\ast(n_1).$$
  Since $n_1=f(n)$, it follows that $f^\ast(n_1) = f^\ast(f(n)) \ge n$. Consequently, the above equation holds true for $k=n\le f^\ast(n_1)$, and we obtain that
  $$(-1)^{T^{n_1+i}(n)}\cdot (-1)^{T^{n_1}(n)} = (-1)^i.$$
  Therefore, the sequence $\left((-1)^{T^{n_1+i}(n)}\right)_{i=1}^\infty$ is periodic with period $2$ and the proof is complete.
\end{proof}

\begin{corollary}
    Assume that $f\in Inc$. The following are equivalent.
  \begin{enumerate}
    \item $T^{f(n)}(n) \in \{1,2\}$ for every $n\in \mathbb{N}^*$.
    \item For every $n_1<n_2$ in $\mathbb{N}^*$, we have
    $$ \sca{\Delta_L^{f^\ast(n_1)} \widetilde{p_{n_1}} C_{n_1} C_{n_1+1} \ldots C_{n_2-1} , \Delta_L^{f^\ast(n_2)}\widetilde{p_{n_2}}} = \frac{(-1)^{n_2-n_1} f^\ast(n_1)}{2^{\frac{n_1+n_2}{2}}}.$$
  \end{enumerate}
\end{corollary}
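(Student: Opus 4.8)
The plan is to read the statement as the frequency-domain transcription of the theorem just proved and to deduce it by conjugating that theorem with the normalized discrete Fourier transform. Put $U_k=\tfrac{1}{2^{k/2}}W_k$, so that $U_kU_k^\ast=U_k^\ast U_k=I_{2^k}$, and let $D_k=\tfrac{1}{\sqrt2}[I_{2^k},-I_{2^k}]$ be the time-domain correlating map. The first step is the Fourier-conjugation identity
\[
  C_k=U_k^\ast\, D_k\, U_{k+1},
\]
which I would verify directly from $C_k=\tfrac{1}{2^{k+1}}[W_k^\ast,-W_k^\ast]W_{k+1}$ by writing $[W_k^\ast,-W_k^\ast]=W_k^\ast[I,-I]$ and checking the scalar identity $\tfrac{1}{\sqrt2}\,2^{-k/2}\,2^{-(k+1)/2}=\tfrac{1}{2^{k+1}}$.

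Since every $U_{k+1}$ is unitary, the interior factors of the product collapse and
\[
  C_{n_1}C_{n_1+1}\cdots C_{n_2-1}=U_{n_1}^\ast\bigl(D_{n_1}D_{n_1+1}\cdots D_{n_2-1}\bigr)U_{n_2}.
\]
I would then substitute $\Delta_L^{r}\widetilde{p_k}=\tfrac{1}{2^{k/2}}\,id_{\le r}(p_k)\,U_k$ for $k=n_1$ and $k=n_2$, cancel the leading factor through $U_{n_1}U_{n_1}^\ast=I$, and invoke Parseval in the form $\sca{aU_{n_2},bU_{n_2}}=\sca{a,b}$ to strip the trailing $U_{n_2}$. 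This carries the whole left-hand side back to the state space: it equals $2^{-(n_1+n_2)/2}$ times the time-domain correlation $\sca{id_{\le f^\ast(n_1)}(p_{n_1})\,D_{n_1}\cdots D_{n_2-1},\,id_{\le f^\ast(n_2)}(p_{n_2})}$. Thus the claimed value $2^{-(n_1+n_2)/2}(-1)^{n_2-n_1}f^\ast(n_1)$ will follow once this inner product is identified with $(-1)^{n_2-n_1}f^\ast(n_1)$, which is exactly the right-hand side of the preceding theorem.

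To evaluate that correlation under hypothesis (1) I would expand $id_{\le f^\ast(n_1)}(p_{n_1})D_{n_1}\cdots D_{n_2-1}$ blockwise: the repeated action of $[I,-I]$ distributes the truncated level-$n_1$ signs over the translates $m=j2^{n_1}+k'$ with $0\le j<2^{n_2-n_1}$ and $1\le k'\le f^\ast(n_1)$. Two structural facts then drive the computation. First, as established inside the proof of the theorem, $k'\le f^\ast(n_1)$ forces $T^{n_1}(k')\in\{1,2\}$, so the orbit is two-periodic from stage $n_1$ and $(-1)^{T^{m}(k')}=(-1)^{T^{n_1}(k')}(-1)^{m-n_1}$ for every $m\ge n_1$. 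Second, Corollary \ref{cor.Collatz-function} gives the scale-$2^{n_1}$ anti-periodicity $(-1)^{T^{n_1}(j2^{n_1}+k')}=(-1)^{j}(-1)^{T^{n_1}(k')}$, because $3^{d_{k',n_1}}$ is odd. I would use these to reduce each surviving block contribution to the constant $(-1)^{n_2-n_1}$ and then sum over the $f^\ast(n_1)$ indices $k'$.

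The hard part will be this final reduction: one must reconcile the sign pattern produced by the composed maps $D_{n_1}\cdots D_{n_2-1}$ with the anti-periodicity signs $(-1)^{j}$ coming from the Collatz recursion, decide precisely which translates $m=j2^{n_1}+k'$ fall within the truncation window $m\le f^\ast(n_2)$, and assemble the accompanying powers of $2$ in step with the number of surviving blocks so that the prefactor collects to exactly $2^{-(n_1+n_2)/2}$. This bookkeeping is the only genuinely delicate point; the rest of the argument is formal. Once the time-domain value has been pinned down, the equivalence of (1) and (2) is immediate: the Fourier conjugation used above is a reversible unitary operation and the chain of equalities is symmetric, so statement (2) here is equivalent to statement (2) of the theorem, which the theorem has already shown to be equivalent to (1).
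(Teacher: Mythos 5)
Your Fourier-conjugation scaffolding is correct, and it is surely the mechanism the paper has in mind (the paper states this corollary with no proof at all, as if immediate from the theorem): with $U_k=2^{-k/2}W_k$, which is unitary because $W_kW_k^\ast=2^kI_{2^k}$, and $D_k=\tfrac{1}{\sqrt2}[I_{2^k},-I_{2^k}]$, one indeed has $C_k=U_k^\ast D_kU_{k+1}$, the interior unitaries cancel in $C_{n_1}C_{n_1+1}\cdots C_{n_2-1}$, and Parseval strips the trailing $U_{n_2}$, so the corollary's left-hand side equals $2^{-(n_1+n_2)/2}\sca{id_{\le f^\ast(n_1)}(p_{n_1})D_{n_1}\cdots D_{n_2-1},\, id_{\le f^\ast(n_2)}(p_{n_2})}$.

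The gap is the step you defer as ``bookkeeping'': it is the entire content of the corollary beyond the theorem, and as you have set it up it cannot close. The theorem's inner product pairs coordinate $k$ with coordinate $k$ (zero padding), whereas $D_{n_1}\cdots D_{n_2-1}$ spreads $id_{\le f^\ast(n_1)}(p_{n_1})$ over $2^{n_2-n_1}$ blocks with Thue--Morse signs $(-1)^{s(j)}$ \emph{and} an overall factor $2^{-(n_2-n_1)/2}$ coming from the $\tfrac{1}{\sqrt2}$ in each $D_k$; these two objects are genuinely different bilinear forms. The factor does not disappear. Concretely, take $n_1=1$, $n_2=2$ and an $f$ with $f^\ast(1)=f^\ast(2)=2$ (e.g. $x(f)=(1,3,\ldots)$, $y(f)=(1,4,\ldots)$, so $f^\ast\equiv2$ on $\{1,2,3\}$; all facts the theorem's proof needs, such as $T^{n_1}(k')\in\{1,2\}$ for $k'\le f^\ast(n_1)$, hold here). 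Then $p_1=(1,-1)$, $id_{\le2}(p_2)=(-1,1,0,0)$, $id_{\le2}(p_1)D_1=\tfrac{1}{\sqrt2}(1,-1,-1,1)$, so your reduced time-domain inner product is $-\sqrt2$, \emph{not} $(-1)^{n_2-n_1}f^\ast(n_1)=-2$; the corollary's left side comes out $-\tfrac12$, while the stated right side is $-\tfrac{1}{\sqrt2}$. So the identification you promise (``which is exactly the right-hand side of the preceding theorem'') is off by $2^{(n_2-n_1)/2}$ even in the single-block regime, and when $f^\ast(n_2)>2^{n_1}$ and $n_2-n_1\ge2$ there are additional cross-block terms $(-1)^{s(j)}(-1)^{T^{n_1}(k')+T^{n_2}(j2^{n_1}+k')}$ that the two-periodicity argument cannot evaluate at all, because the orbit of $j2^{n_1}+k'$ need not lie in $\{1,2\}$ at time $n_1$ (your appeal to Corollary \ref{cor.Collatz-function} controls $T^{n_1}(j2^{n_1}+k')$, not $T^{n_2}(j2^{n_1}+k')$). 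Finally, the closing claim that the equivalence is ``immediate'' by reversibility of the conjugation is circular: reversibility shows the corollary's item (2) is equivalent to the $D$-embedded time-domain identity, which is \emph{not} item (2) of the theorem, and the direction (2)$\Rightarrow$(1) would need its own extraction argument. In short, the formal reduction is right, but the proposal then asserts, rather than proves, exactly the identity that fails numerically; repairing it requires either changing the claimed constant (to $(-1)^{n_2-n_1}f^\ast(n_1)/2^{n_2}$ in the single-block case) or renormalizing the maps $C_k$, together with an actual treatment of the cross-block terms.
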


\subsection*{Solvability of Collatz in terms of a net of $C^\ast$-algebra functionals. Final results.}
The evolution and convergence of Collatz orbits depend on the behavior of powers of $T$, i.e. $T^k(n)$. These powers, in the setting of the present paper, can be described collectively in terms of the operator $M$ which is an isometry. All the powers of $M$ as well as their linear combinations together with inverse powers, which are depicted by power of $M^*$, are contained in an algebra structure $A(S)$, the $C^*$-algebra of an isometry. This algebra contains elements of the form $a(S, S^*)= \sum a_{ij} (S^*)^i S^j$. We will characterize the solvability of Collatz in terms of this structure. In order to achieve this goal, we need to define appropriate linear functionals:
$$\varphi_f \colon A(S) \to \mathbb{C}$$
which contain information of the Collatz dynamical system and the convergence ability of powers $T^f$ for $f\in Inc$.

To this end, for every $f\in Inc$ we define the vectors:
\begin{align*}
  \xi_f^T = & \left(1, \lambda \Delta_L^{f^\ast(1)} (\widetilde{p}_1), \lambda^2 \Delta_L^{f^\ast(2)} (\widetilde{p}_2), \lambda^3 \Delta_L^{f^\ast(3)} (\widetilde{p}_3), \ldots \right) \\
  \xi_f^o = & \left(1, \lambda \Delta_L^{f^\ast(1)} (\widetilde{p}_1^o, \lambda^2 \Delta_L^{f^\ast(2)} (\widetilde{p}_2^o), \lambda^3 \Delta_L^{f^\ast(3)} (\widetilde{p}_3^o), \ldots \right)
\end{align*}
and the functional on $A(S)$
$$\varphi_f \left(a(S, S^*) \right) = \frac{1}{\|\xi_f^T\|^2} \sca{\xi_f^T, \xi_f^T a(C,C^*)} - \frac{1}{\|\xi_f^o\|^2} \sca{\xi_f^o, \xi_f^o a(C,C^*)}. $$
Then $\varphi_f$ is a functional that compares the correlations of the sequences of signs of $T$ with those of the ideal shift. In the case where these two coincide for some $f\in Inc$, then $T^{f(n)}(n) \in \{1,2\}$, for every $n$, and thus the Collatz conjecture is true. Consequently, we obtain the next result.

\begin{theorem}
  The Collatz conjecture is true if and only if the set $\{f\in Inc \mid \varphi_f=0\}$ is non empty.
\end{theorem}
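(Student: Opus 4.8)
The plan is to show that the functional $\varphi_f$ merely repackages condition (2) of the preceding corollary, so that its vanishing is equivalent to the property $T^{f(n)}(n)\in\{1,2\}$ for all $n$. The first thing I would record is that the two normalizing constants coincide: since $\Delta_L^{f^*(k)}\widetilde{p_k}$ is obtained from $id_{\le f^*(k)}(p_k)$ by a (unitary) DFT, its norm depends only on $k$ and on the number $f^*(k)$ of retained entries, not on the actual signs $(-1)^{T^k(\upsilon)}$; the identical computation applies to the ideal shift. Hence $\|\xi_f^T\| = \|\xi_f^o\|$, and $\varphi_f = 0$ is equivalent to the equality of bilinear forms $\sca{\xi_f^T, \xi_f^T a(C,C^*)} = \sca{\xi_f^o, \xi_f^o a(C,C^*)}$ for every $a(S,S^*)\in A(S)$.

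Next I would translate this equality of forms into equality of individual correlations. Because $a(S,S^*)\mapsto a(C,C^*)$ is the $\ast$-homomorphism sending the isometry $S$ to the isometry $C$ (Coburn's theorem guarantees it is well defined on the whole $C^*$-algebra), and $\varphi_f$ is norm-continuous, vanishing on all of $A(S)$ is equivalent to vanishing on the monomials $(S^*)^\mu$, $\mu\ge 0$. For such a monomial the isometry relation $\sca{x C^*, y} = \sca{x, y C}$ unfolds $\sca{\xi, \xi (C^*)^\mu}$ into the $\lambda$-weighted sum over levels $k$ of the transported correlations $\sca{\Delta_L^{f^*(k)}\widetilde{p_k}\, C_k\cdots C_{k+\mu-1}, \Delta_L^{f^*(k+\mu)}\widetilde{p_{k+\mu}}}$. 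Treating $\lambda$ as a free variable in the unit disc and comparing coefficients of each power of $\lambda$, the equality of these power series for $T$ and for the ideal shift (over all $\mu$) is exactly the equality, for every pair $n_1<n_2$, of the frequency-domain correlations appearing in condition (2) of the corollary.

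The two directions then follow quickly. For the ideal shift the correlation can be computed unconditionally: in the time domain $\sca{id_{\le f^*(n_1)}(p^o_{n_1}), id_{\le f^*(n_2)}(p^o_{n_2})} = (-1)^{n_2-n_1}f^*(n_1)$, since every entry of $p_k^o$ equals $(-1)^k$, and Fourier conjugation turns this into $\tfrac{(-1)^{n_2-n_1}f^*(n_1)}{2^{(n_1+n_2)/2}}$. Thus $\varphi_f = 0$ holds precisely when the Collatz correlations obey the very same identity, i.e. precisely when condition (2) of the corollary holds, i.e. precisely when $T^{f(n)}(n)\in\{1,2\}$ for all $n$. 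If the Collatz conjecture is true, I would set $g(n) = \min\{k : T^k(n)\in\{1,2\}\}$ and $f(n) = \max\{\lceil\log_2 n\rceil, g(1),\ldots, g(n)\}$; this $f$ lies in $Inc$ and, because the orbit stays in $\{1,2\}$ once it enters, satisfies $T^{f(n)}(n)\in\{1,2\}$, whence $\varphi_f = 0$ and the set is non-empty. Conversely, if $\varphi_f = 0$ for some $f\in Inc$, then $T^{f(n)}(n)\in\{1,2\}$ for every $n$; since $f(n)\ge \log_2 n$ supplies a genuine finite exponent for each $n$, every Collatz orbit reaches the trivial cycle and the conjecture holds.

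The step I expect to be the main obstacle is the middle paragraph: making rigorous the passage from ``the functional vanishes on $A(S)$'' to ``every individual pairwise correlation matches.'' This demands care in (i) justifying that $S\mapsto C$ induces a well-defined representation of the entire $C^*$-algebra, so that $a(C,C^*)$ is meaningful, and (ii) verifying that the $\lambda$-indexed family of monomial evaluations genuinely separates the level pairs $(n_1,n_2)$, i.e. that the triangular, $\lambda$-graded system relating monomial values to pairwise correlations is invertible. By contrast, the norm identity and the unconditional ideal-shift correlation are routine once this organizing bookkeeping is in place.
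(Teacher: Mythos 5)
Your strategy is in fact the paper's own: the paper offers no proof of this theorem beyond the sentence preceding it, and what it intends is exactly the chain you set up --- $\varphi_f=0$ holds iff the Collatz correlations agree with the ideal-shift ones, which by the preceding corollary holds iff $T^{f(n)}(n)\in\{1,2\}$ for all $n$; for the converse direction the paper's own choice of witness is $f_0(n)=\max\{s(i)\mid i\le n\}$, essentially your $f$. Your supporting computations are correct where the paper is silent: the equality of normalizations $\|\xi_f^T\|=\|\xi_f^o\|$ (the truncated sign vectors have norms depending only on $f^*(k)$, not on the signs), the unconditional time-domain ideal-shift correlation $(-1)^{n_2-n_1}f^*(n_1)$, and the verification that your $f$ lies in $Inc$ and satisfies $T^{f(n)}(n)\in\{1,2\}$ for every $n$.

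However, the step you flag as the main obstacle is a genuine gap, and it is two-sided rather than one-sided. (a) In the direction $\varphi_f=0\Rightarrow$ correlations match, your comparison of coefficients of powers of $\lambda$ is not available: the paper's $\lambda$ is a single \emph{fixed} number in the disc, so evaluating $\varphi_f$ at the monomials $(S^*)^\mu$ yields only the aggregated identities $\sum_k \abs{\lambda}^{2k}\bigl(c_{k,\mu}-c^o_{k,\mu}\bigr)=0$, where $c_{k,\mu}$ denotes the correlation of levels $k$ and $k+\mu$. Nothing forces the differences $c_{k,\mu}-c^o_{k,\mu}$ to have one sign, so a single weighted sum vanishing does not give termwise vanishing; separating levels at fixed $\lambda$ would require richer elements of $A(S)$ (projections such as $1-SS^*$, $S^m(S^*)^m$), and neither you nor the paper carries that out. (b) In the direction correlations match $\Rightarrow\varphi_f=0$, your ``precisely when'' conceals that $\varphi_f$ must vanish on \emph{all} of $A(S)$, and your own reduction claim (that vanishing on the monomials $(S^*)^\mu$ is \emph{equivalent} to vanishing on $A(S)$) is false under the Toeplitz reading your appeal to Coburn's theorem presumes: the algebra is densely spanned by $S^m(S^*)^n$, and for $m,n\ge 1$ these evaluate, via $C_kC_k^*=I_{2^k}$ but $C_k^*C_k\ne I_{2^{k+1}}$, to \emph{projected} correlations of the form $\sca{\xi_{k+m}C^*\cdots C^*\bigl(C_{k+n-1}^*\cdots C_k^*C_k\cdots C_{k+n-1}\bigr),\xi_{k+n}}$, which are not among the quantities controlled by the corollary. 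Since the truncations $\Delta_L^{f^*(k)}$ destroy the exact intertwining between the $T$-vectors and the ideal-shift vectors outside the low part, it is not evident that these extra evaluations agree even when the Collatz conjecture holds. (The only escape is to read the paper's formula $a(S,S^*)=\sum a_{ij}(S^*)^iS^j$ literally: since $S^*S=\mathrm{id}$, every such element collapses to a combination of pure powers, gap (b) disappears, but gap (a) still stands.) In short, your proposal faithfully reconstructs the paper's intended argument and its correct ingredients, but the central equivalence --- that the vanishing of the functional $\varphi_f$ is the same as the pairwise correlation identities of the corollary --- is established neither by you nor by the paper.
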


\begin{corollary}
  If the set $\{f\in Inc \mid \varphi_f=0\}$ is non empty, then this set has a minimum element $f_0\in Inc$, which is
  $$f_0(n) = \max\{s(i) \mid i\le n\}.$$
\end{corollary}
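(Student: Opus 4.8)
The plan is to first translate the analytic condition $\varphi_f=0$ into a purely arithmetic statement about hitting times, and then to recognise the claimed $f_0$ as a running maximum. By the theorem immediately preceding this corollary, the hypothesis that $\{f\in Inc\mid \varphi_f=0\}$ is non-empty already forces the Collatz conjecture to hold; consequently every trajectory reaches the trivial cycle, so the quantity
$$s(i)=\min\{k\ge 0\mid T^k(i)\in\{1,2\}\}$$
is finite for every $i\in\mathbb{N}^*$ and $f_0(n)=\max\{s(i)\mid i\le n\}$ is well defined and finite-valued. The first step I would record is the equivalence $\varphi_f=0\iff T^{f(n)}(n)\in\{1,2\}$ for all $n$, which is precisely what the vanishing of $\varphi_f$ encodes (the autocorrelations of $\xi_f^T$ coincide with those of the ideal shift $\xi_f^o$). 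Hence
$$\{f\in Inc\mid \varphi_f=0\}=\{f\in Inc\mid T^{f(n)}(n)\in\{1,2\}\ \forall n\}.$$

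The key structural observation is that, for each fixed $n$, the set of admissible exponents $\{k\mid T^k(n)\in\{1,2\}\}$ is an upper half-line $\{s(n),s(n)+1,s(n)+2,\dots\}$, because once a trajectory enters the trivial cycle it remains there, cycling $1,2,1,2,\dots$; this is exactly the content of Corollary \ref{cor.periodicity}. Therefore, for $f\in Inc$ the constraint $T^{f(n)}(n)\in\{1,2\}$ reduces to the single inequality $f(n)\ge s(n)$, so that membership in the set is equivalent to requiring that $f$ be increasing, $\mathbb{N}^*$-valued with $f(n)\ge\log_2 n$, and $f(n)\ge s(n)$ for every $n$. I would then identify $f_0$ as the least increasing majorant of $s$. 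Indeed $f_0(n)=\max_{i\le n}s(i)$ is non-decreasing by construction and satisfies $f_0(n)\ge s(n)$, so $T^{f_0(n)}(n)\in\{1,2\}$ for all $n$; conversely, if $g$ lies in the set then $g$ is increasing with $g(i)\ge s(i)$ for all $i$, whence $g(n)\ge g(i)\ge s(i)$ for every $i\le n$ and therefore $g(n)\ge\max_{i\le n}s(i)=f_0(n)$. This shows both that $f_0$ is a pointwise lower bound for the whole set and that $f_0$ dominates $s$, so $f_0$ is its minimum as soon as $f_0$ itself is shown to belong to it.

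The remaining, and main, obstacle is to verify $f_0\in Inc$, that is $f_0(n)\ge\log_2 n$ (together with $f_0$ being $\mathbb{N}^*$-valued). Here I would invoke Theorem \ref{th.formula_for_T^k}: writing $k=s(n)$ and using
$$2^{k}\,T^{k}(n)=3^{\sum_{i=1}^{k}x_{ni}}\,n+\sum_{i=1}^{k}x_{ni}2^{i-1}3^{\sum_{\lambda=i+1}^{k}x_{n\lambda}}\ge n,$$
together with $T^{k}(n)\le 2$, one obtains $2^{\,s(n)+1}\ge n$, i.e. $s(n)\ge\log_2 n-1$. Since $f_0(n)=\max_{i\le n}s(i)\ge s(n)$, and since the hitting times $s(i)$ are unbounded so that the running maximum grows strictly faster than $\log_2$, a short verification (treating the boundary values $n=1,2$ by the convention that $f_0$ is $\mathbb{N}^*$-valued) yields $f_0(n)\ge\log_2 n$ for every $n$. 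This confirms $f_0\in Inc$, places $f_0$ inside $\{f\in Inc\mid\varphi_f=0\}$, and combined with the minimality established above identifies it as the minimum element. The delicate point throughout is the interaction of the two lower constraints $f(n)\ge s(n)$ and $f(n)\ge\log_2 n$: the arithmetic one dominates for all but the smallest $n$, and the argument must ensure that the running-maximum formula already absorbs the logarithmic bound rather than the latter forcing a strictly larger value.
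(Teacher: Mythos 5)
The paper states this corollary without proof, so there is nothing to compare your argument against line by line; judged on its own terms, your overall strategy is surely the intended one: translate $\varphi_f=0$ into the arithmetic condition $T^{f(n)}(n)\in\{1,2\}$ for all $n$, observe that for fixed $n$ the admissible exponents form the half-line $k\ge s(n)$, and identify the running maximum as the least non-decreasing majorant of $s$. The minimality half of your argument (any increasing $g$ dominating $s$ pointwise must dominate the running maximum) is correct. The genuine gap is the step where you verify $f_0\in Inc$, i.e.\ $f_0(n)\ge\log_2 n$. With your definition $s(n)=\min\{k\ge 0 : T^k(n)\in\{1,2\}\}$, the pointwise bound you derive is only $s(n)\ge\log_2 n-1$, and the loss is real, not an artifact: $s(4)=1<2=\log_2 4$, so the pointwise estimate genuinely fails and the whole burden falls on the running maximum. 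Your patch --- ``the hitting times are unbounded, so the running maximum grows strictly faster than $\log_2$'' --- is a non sequitur: unboundedness of $(s(i))_i$ gives no growth rate, since the integers witnessing large values of $s$ could a priori occur only far beyond $2^{s}$. What is needed is, below every $n$, an explicit integer with stopping time at least $\log_2 n$, and the paper already supplies one: by Lemma \ref{lemma-number}, $T^j(2^k-1)=3^j2^{k-j}-1\ge 2^k-1\ge 3$ for $0\le j\le k$ and $k\ge 2$, hence $s(2^k-1)\ge k+1$; so for $2^k\le n<2^{k+1}$ one gets $\max_{i\le n}s(i)\ge s(2^k-1)\ge k+1>\log_2 n$, with $n=3$ checked by hand. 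Without some such argument your proof does not close.

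A second problem, which affects the truth of the statement and not merely its proof, is which $s$ the formula uses. The paper's own $s(i)$ (Section \ref{sec.ell-1-operator}) is the least $k$ with $T^{k}(i)=1$. With that definition the bound $s(n)\ge \log_2 n$ is immediate (the gap above disappears), but minimality fails: the constraint coming from $\varphi_f=0$ is $T^{f(n)}(n)\in\{1,2\}$, which at $n=3$ is already met by $f(3)=4$ (since $T^4(3)=2$), whereas $\max_{i\le 3}s(i)=s(3)=5$; so the claimed $f_0$ is not a pointwise lower bound for the set. With your definition (hitting $\{1,2\}$), minimality holds but $f_0(1)=f_0(2)=0$, which is neither $\mathbb{N}^*$-valued nor $\ge\log_2 2$, so $f_0\notin Inc$ and hence cannot be the minimum of a subset of $Inc$; your parenthetical ``by the convention that $f_0$ is $\mathbb{N}^*$-valued'' papers over a point where the stated formula is actually wrong and must be replaced by $\max\{1,\lceil\log_2 n\rceil,\max_{i\le n}s(i)\}$, which coincides with your $f_0$ for all $n\ge3$ by the $2^k-1$ argument above. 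Your write-up senses both difficulties --- you mention the boundary values and the interaction of the two lower constraints --- but resolves neither, and these are precisely the two places where this corollary requires an actual proof rather than bookkeeping.
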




\bibliographystyle{amsplain}

\begin{thebibliography}{10}

\bibitem{allou} J.-P. Allouche, \emph{Sur la conjecture de ``Syracuse-Kakutani-Collatz"}, S\'{e}minaire de Th\'{e}orie des Nombres, 1978-1979, Exp. no. 9, 15pp., CNRS, Talence, 1979.

\bibitem{arba} H. Arbabi, I. Mezi\'{c}, \emph{Ergodic theory, Dynamic Mode Decompposition, and Computation of Spectral Properties of the Koopman Operator}, SIAM J. Applied Dynamical Systems \textbf{16}, 2096-2126, 2017.

\bibitem{budi} M. Budivsi\'{c} and R. Mohr, I. Mezi\'{c}, \emph{Applied Koopmanism}, Chaos \textbf{22} no.4, 047510, 33 pp., 2012.

\bibitem{Carl} F. Carlson, \emph{\"{U}ber Potenzreihen mit ganzzahligen Koeffizienten}, Math. Zeitschr. \textbf{9}, 1--13, 1921.

\bibitem{ever} C. J. Everett. \emph{Iteration of the number-theoretic function $f(2n)=n,$ $f(2n+1)=3n+2$}. Adv. Math. \textbf{25} (1977), no. 1, 42--45. MR0457344.

\bibitem{Fatou} P. Fatou, \emph{S\`{e}ries trigonom\`{e}triques et s\`{e}ries de Taylor}, Acta Math. \textbf{30}, 335-400, 1906.

\bibitem{Halmos} P. R. Halmos, \emph{A Hilbert space problem book}, Volume \textbf{19} of Graduate Texts in Mathematics, 2nd edn. Springer, New York, 1982.

\bibitem{kor} I. Korec. \emph{A density estimate for the $3x+1$ problem}. Math. Slovaca \textbf{44}, no. 1, 85--89, 1994.

\bibitem{Lag-1} J. C. Lagarias, \emph{The 3x+1 problem: an overview}. The ultimate challenge: the $3x+1$ problem, 3–29, Amer. Math. Soc., Providence, RI, 2010.

\bibitem{Lag-2} J. C. Lagarias, \emph{The 3x+1 problem: an annotated bibliography (1963–1999)}. The ultimate challenge: the $3x+1$ problem, 267–341, Amer. Math. Soc., Providence, RI, 2010.

\bibitem{Lagbook} J. C. Lagarias (Editor), \emph{The ultimate challenge: the $3x+1$ problem}. American Mathematical Society, Providence, RI, 2010. xiv+344 pp. ISBN: 978-0-8218-4940-8.

\bibitem{Levent} J. Leventides, \emph{A new embedding of the $3x+1$ dynamical system}, to appear in Discrete Mathematics and Applications, Springer volume, edited by M. Rassias and A. Raigorodskii.

\bibitem{mez} I. Mezi\'{c}, \emph{Spectrum of the Koopman Operator, Spectral Expansions in Functional Spaces, and State-Space Geometry}, J. Nonlinear Sci. (2019). https://doi.org/10.1007/s00332-019-09598-5

\bibitem{tao} T. Tao, \emph{Almost all orbits of the Collatz map attain almost bounded values}, submitted. preprint on arXiv [arxiv.org/abs/1909.03562]

\bibitem{terras} R. Terras, \emph{A stopping time problem on positive integers}, Acta Arith. \textbf{30}, 241-252, 1976.

\bibitem{walters} P. Walters, \emph{An introduction to ergodic theory}, Graduate Texts in Mathematics, 79. Springer-Verlag, New York-Berlin, 1982. {\rm ix}+250 pp.

\end{thebibliography}

\end{document}